\newtheorem{theorem}{Theorem}[section]
\newtheorem{proposition}[theorem]{Proposition}
\newtheorem{lemma}[theorem]{Lemma}
\newtheorem{corollary}[theorem]{Corollary}
\theoremstyle{definition}
\newtheorem{remark}[theorem]{Remark}
\newtheorem*{theorem*}{\bf{Theorem}}
\numberwithin{equation}{section}
\newcommand{\sfD}{\mathsf{D}}
\newcommand{\bmu}{\boldsymbol{\mu}}
\newcommand{\balpha}{\boldsymbol{\alpha}}
\newcommand{\bbZ}{{\mathbb{Z}}}
\newcommand{\bbP}{{\mathbb{P}}}
\newcommand{\bbG}{{\mathbb{G}}}
\newcommand{\bbC}{{\mathbb{C}}}
\newcommand{\bbQ}{{\mathbb{Q}}}
\newcommand{\Aut}{\operatorname{Aut}}
\newcommand{\Pic}{\operatorname{Pic}}
\newcommand{\NS}{\operatorname{NS}}
\newcommand{\Ker}{\operatorname{Ker}}
\newcommand{\Num}{\operatorname{Num}}
\newcommand{\Or}{\operatorname{Or}}
\newcommand{\nt}{\operatorname{nt}}
\newcommand{\ct}{\operatorname{ct}}
\newcommand{\Tors}{\operatorname{Tors}}
\newcommand{\Hom}{\operatorname{Hom}}
\newcommand{\nd}{\operatorname{nd}}
\newcommand{\bfAut}{\mathbf{Aut}}
\newcommand{\nod}{\operatorname{nod}}
\newcommand{\rank}{\operatorname{rank}}
\newcommand{\ord}{\operatorname{ord}}
\newcommand{\calO}{\mathcal{O}}
\newcommand{\calL}{\mathcal{L}}
\newcommand{\et}{\operatorname{\acute{e}t}}
\newcommand{\beq}{\begin{equation}}
\newcommand{\eeq}{\end{equation}}
\author{Igor Dolgachev}
\address{\hfill \newline 
Department of Mathematics \newline
University of Michigan \newline
2072 East Hall  \newline
525 East University Avenue \newline
Ann Arbor,
 MI 48109-1109 USA}
\email{idolga@umich.edu}
\author{Gebhard Martin}
\address{\hfill \newline
Mathematisches Institut  \newline
Universit\"at Bonn \newline
Endenicher Allee 60 \newline
53115 Bonn \newline
Germany}
\email{gmartin@math.uni-bonn.de}
\date{}
\dedicatory{To Shigeyuki Kond\={o} on the occasion of his 60th birthday}
\title[Numerically trivial automorphisms of Enriques surfaces]{Numerically trivial automorphisms of Enriques surfaces in  characteristic $2$}
\thanks{The second author is supported by the DFG Sachbeihilfe LI 1906/3 - 1 "Automorphismen von Enriques Fl\"achen".}
\subjclass[2010]{Primary 14J28; Secondary 14J50}
\keywords{Enriques surfaces, Action of automorphisms on cohomology}
\begin{document}
\maketitle
\begin{abstract}
An automorphism of an algebraic surface $S$ is called cohomologically  (numerically) trivial if it acts identically on  the second cohomology  group (this group modulo torsion subgroup). Extending the results of S. Mukai and Y. Namikawa to arbitrary characteristic $p > 0$, we prove that the group of cohomologically trivial automorphisms $\Aut_{\ct}(S)$ of an Enriques surface $S$ is of order $\le 2$ if $S$ is not supersingular. If $p = 2$ and $S$ is supersingular, we show that $\Aut_{\ct}(S)$ is a cyclic group of odd order $n\in \{1,2,3,5,7,11\}$ or the quaternion group $Q_8$ of order $8$ and we describe explicitly all the exceptional cases.  If $K_S \neq 0$, we also prove that 
the group $\Aut_{\nt}(S)$ of  numerically trivial automorphisms is a subgroup of a cyclic group of order $\le 4$ unless $p = 2$, where $\Aut_{\nt}(S)$ is a subgroup of a $2$-elementary group of rank $\le 2$. \end{abstract}

 \section{Introduction}  Let $S$ be a  smooth projective algebraic surface over an algebraically closed field $\Bbbk$ of characteristic $p\ge 0$. 
 An automorphism $g$ of $S$ is called \emph{cohomologically trivial} (resp. \emph{numerically  trivial}) if it acts identically on the  flat cohomology $H^2(S,\bbZ_l(1))$ (resp. $H^2(S,\bbZ_l(1))$ modulo torsion). An easy example is an automorphism isotopic to the identity, i.e. one that belongs to the connected group of automorphisms that preserves an ample divisor class. When the latter group is trivial, such an automorphism exists very rarely. For example, over the field of complex numbers, $S$ must be either an elliptic surface with $q=p_g = 0$ or with $c_2 = 0$, or a surface of general type whose canonical linear system has a base point or its Chern classes satisfy $c_1^2 = 2c_2$ or $c_1^2 = 3c_2$ (see \cite{Peters}). In particular, a complex K3 surface does not admit non-trivial numerically trivial automorphisms, while  a complex  Enriques surface could have them. The first example of such an 
 automorphism of an Enriques surface was constructed by D. Lieberman in 1976 \cite{Lieberman}. Later, S. Mukai and Y. Namikawa were able to give a complete classification of possible groups of cohomologically or numerically trivial automorphisms of complex Enriques surfaces as well as the surfaces themselves on which such automorphisms could act \cite{MukaiNam},\cite{Mukai}

In the case of algebraic surfaces over a field of positive characteristic we know less.  However, we know, for example, that K3 surfaces do not admit any numerically trivial automorphisms by work of A. Ogus \cite{Ogus}, J. Keum \cite{Keum1} and J. Rizov \cite{Rizov}.

 This paper deals with the case when $S$ is an Enriques surface. One of the main tools of the Mukai-Namikawa classification is the Global Torelli Theorem for K3 covers of Enriques surfaces. The absence of these tools in the case of characteristic $p > 0$ requires different methods. A paper \cite{DolgachevNum} of the first author was the first attempt to extend the work of Mukai and Namikawa to this case. Although the main result of the paper is correct when $p \ne 2$, some arguments were not complete and the analysis of possible groups in characteristic $2$ was erroneous and far from giving a classification of possible groups. In fact, a recent work of T. Katsura, S. Kondo and the second author  that gives a complete classification of Enriques surfaces in characteristic $2$ with finite automorphism group reveals  many possible groups of numerically trivial automorphisms that were claimed to be excluded in the paper. The goal of this paper is to use  some new ideas to give a complete classification of groups of numerically and cohomologically trivial automorphisms in characteristic two. For completeness sake, we also use the new ideas to 
treat  the case $p\ne 2$.   
  
 We show that, if the characteristic is not equal to $2$, the main assertion of Mukai and Namikawa still holds: the group $\Aut_{\ct}(S)$ is of order $\le 2$ and the group $\Aut_{\nt}(S)$ is a cyclic group of order $\le 4$. 
 
 If $p = 2$, $K_S \ne 0$ ($S$ is called a classical Enriques surface in this case) and $S$ is not $E_8$-extra-special, then 
 $\Aut_{\ct}(S)$ is trivial unless $S$ is an  extra-special surface of type $\tilde{D}_8$. In this case, the automorphism group is of order $2$. The group  $\Aut_{\nt}(S)$ is a subgroup of the product of two cyclic groups of order $2$. 
 
 If $p = 2$, and $S$ is an ordinary Enriques surface (defined by  the action of the Frobenius on its cohomology), 
 then $\Aut_{\ct}(S) = \Aut_{\nt}(S)$ is of order less than or equal to $2$.
 
 Finally, if $p = 2$ and $S$ is a supersingular Enriques surface, we prove that $\Aut_{\ct}(S)$ is of order $\le 2$ unless $S$ is "very special":  We show that the only Enriques surfaces with a cohomologically trivial automorphism of odd order $\ne 3$ or more than one cohomologically trivial automorphism of even order are certain exceptional or extra-special surfaces with finite automorphism group and we give some necessary conditions for a surface to have a cohomologically trivial automorphism of order $3$. 
 
The restrictions obtained on the possible groups of cohomologically and numerically trivial automorphisms are summarized in the following theorem.
 
  \begin{theorem*}
Let $S$ be an Enriques surface over an algebraically closed field of characteristic $p \geq 0$.
\begin{enumerate}
\item If $p \neq 2$, then $|\Aut_{\ct}(S)| \leq 2$ and $\Aut_{\nt}(S) \cong \bbZ/2^a\bbZ$ with $a \leq 2$.
\item If $p = 2$ and $S$ is ordinary, then $|\Aut_{\ct}(S)| = |\Aut_{\nt}(S)| \leq 2$.
\item If $p = 2$ and $S$ is classical and not $E_8$-extra-special, then $|\Aut_{\ct}(S)| \leq 2$ and $\Aut_{\nt}(S) \cong (\bbZ/2\bbZ)^a$ with $a \leq 2$.
\item If $p = 2$ and $S$ is supersingular, then $|\Aut_{\ct}(S)| = |\Aut_{\nt}(S)| \leq 2$, unless $S$ is one of five types of exceptions distinguished by their dual graphs of $(-2)$-curves.
\end{enumerate}
Moreover, if $S$ is unnodal, then $\Aut_{\ct}(S) = \{1\}$.
 \end{theorem*}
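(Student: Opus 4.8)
The plan is to reduce the entire statement to the way a numerically trivial automorphism $g$ acts on the genus-one fibrations and on the individual $(-2)$-curves of $S$, and then to split into cases according to the characteristic and the type of $S$. First I would record the elementary reductions. Because $g\in\Aut_{\nt}(S)$ fixes every class in $\Num(S)$, it preserves the class of each half-fiber, hence each genus-one pencil $|2F|$; and it sends every $(-2)$-curve to a $(-2)$-curve in the same numerical class, which forces $g$ to fix each $(-2)$-curve as a subvariety. Both groups are finite: $\Aut_{\nt}(S)$ is the kernel of the representation $\Aut(S)\to\mathrm{O}(\Num(S))$, and this kernel is finite for an Enriques surface. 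The precise difference between $\Aut_{\ct}(S)$ and $\Aut_{\nt}(S)$ is governed by the action of $g$ on the nonsplit extension $0\to\Tors\to H^2_{\et}(S,\bbZ_l)\to\Num(S)\otimes\bbZ_l\to0$; concretely, $\Aut_{\nt}(S)/\Aut_{\ct}(S)$ injects into $\Hom(\Num(S),\Tors)$ via $g\mapsto(g-\id)$, and this is the source of the extra cyclic or $2$-elementary factor separating the two groups in each case.

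Second, for each genus-one fibration $f\colon S\to\bbP^1$ I would analyse the induced $\bar g\in\Aut(\bbP^1)$. Since $g$ fixes the (one or two) multiple fibers, $\bar g$ fixes the corresponding points of $\bbP^1$, so $\bar g$ is either trivial or a nontrivial automorphism with two fixed points; running this over a spanning family of fibrations determines $\bar g$, and once $\bar g=\id$ the automorphism $g$ acts fiberwise, as a translation or an involution of the generic (possibly quasi-elliptic) fiber. The admissible orders of $g$ are then bounded by the automorphisms of an elliptic or cuspidal cubic curve compatible with the torsion data of the multiple fibers, which is the ultimate source of the numerical bounds $\le 4$ and of the exponent $2$ appearing in characteristic $2$.

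Third, in the cases where a smooth K3 cover exists — $p\neq2$, and $p=2$ with $S$ ordinary — I would pass to the canonical (resp. $\mu_2$-)cover $\pi\colon X\to S$, a K3 surface with covering involution $\sigma$. Under the eigenlattice decomposition $\Num(X)\otimes\bbQ=(\Num(X)^\sigma\oplus\Num(X)^{-\sigma})\otimes\bbQ$, any lift $\tilde g$ of $g\in\Aut_{\nt}(S)$ acts trivially on $\Num(X)^\sigma$, which descends from $\Num(S)$; by the theorem of Ogus, Keum and Rizov that a K3 surface has no nontrivial numerically trivial automorphism, $\tilde g$ must act nontrivially, hence nontrivially on the anti-invariant part, and the classification of the resulting finite-order actions bounds $|\Aut_{\nt}(S)|$. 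Tracking in addition the action on $\Tors$ separates $\Aut_{\ct}(S)$ from $\Aut_{\nt}(S)$ and yields statements $(1)$ and $(2)$.

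The main obstacle is statements $(3)$ and $(4)$, where $S$ is classical or supersingular in characteristic $2$ and the canonical cover $X$ is a singular, non-K3 Gorenstein surface (an $\alpha_2$- or $\mu_2$-torsor with trivial dualizing sheaf), so no Torelli-type input is available. Here I would replace the Hodge-theoretic argument by two explicit tools. The $l$-adic Lefschetz fixed-point formula constrains the fixed locus of $g$ and of each of its powers, forcing the fixed curves and points into configurations compatible with $g$ being numerically trivial; and the classification of dual graphs of $(-2)$-curves, in particular the Katsura--Kondo--Martin list of Enriques surfaces with finite automorphism group, exhibits exactly the configurations — the $\tilde D_8$- and $E_8$-extra-special surfaces and the five supersingular exceptions — on which nontrivial numerically trivial automorphisms can survive. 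Matching the fibration and Lefschetz constraints against these graphs is the crux of the argument and yields $\Aut_{\nt}(S)\cong(\bbZ/2\bbZ)^a$ with $a\le2$ in the classical case together with the list of five supersingular types in $(4)$. Finally, for the "Moreover" clause I would note that every surface realizing a nontrivial cohomologically trivial automorphism in $(1)$--$(4)$ is nodal; equivalently, on an unnodal $S$ every genus-one fibration has irreducible fibers and there are no $(-2)$-curves to organize an exceptional configuration, so the fiberwise analysis of the second step leaves $g$ no room to act nontrivially and forces $\Aut_{\ct}(S)=\{1\}$.
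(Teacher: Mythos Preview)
Your third step has a genuine gap. You correctly observe that a lift $\tilde g$ of $g\in\Aut_{\nt}(S)$ to the K3 cover $X$ acts trivially on $\pi^*\Num(S)$ and hence, by Ogus--Keum--Rizov, must act nontrivially on the anti-invariant part $\Num(X)^-$ whenever $\tilde g\notin\{\id,\sigma\}$. But your appeal to ``the classification of the resulting finite-order actions'' is not a proof step: $\Num(X)^-$ is a negative-definite lattice of rank anywhere up to $12$ whose isometry group can be large, and nothing you have written bounds the order of $\tilde g$ on it by $2$ or $4$. Over $\bbC$ Mukai and Namikawa obtain that bound through the transcendental lattice and the Global Torelli theorem, not through $\Num(X)$ alone; in characteristic $p>0$ that Hodge-theoretic input is unavailable, and the paper's introduction explicitly flags this as the reason a new method is required. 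Your second step has the same defect in milder form: knowing that $g$ acts fiberwise on some pencil does not by itself bound its order, since the automorphism group of the generic fiber can be large (e.g.\ order $24$ for a supersingular elliptic curve in characteristic $2$).

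The paper's argument is uniform across all four cases and does not touch the K3 cover. The central tool is the degree-$2$ \emph{bielliptic map} $\phi\colon S\to\sfD\subset\bbP^4$ attached to $|2F_1+2F_2|$ for a non-degenerate $U_{[2]}$-pair. The group $\Aut(\sfD)$ is computed explicitly (connected component $\bbG_m^2$, $\bbG_a^2$, or $\bbG_a^2\rtimes\bbG_m$ according to the type of $S$), and a direct calculation shows that any cohomologically trivial $g$ of $2$-power order fixing a suitable $(-2)$-curve descends to the identity on $\sfD$, hence coincides with the covering involution of $\phi$; this already gives $|\Aut_{\ct}(S)|\le 2$ once such a curve is exhibited. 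The curve is produced combinatorially by extending to a $c$-degenerate $U_{[10]}$-sequence with $c\ge 3$, and in the unnodal case ($c=10$, all $F_i$ irreducible) a Riemann--Roch argument gives $F_i\cap F_j\cap F_k=\emptyset$, so $g$ has too many fixed points on each half-fiber and must be trivial. For supersingular $S$ the residual odd-order part is handled by showing, again via the bielliptic map, that $g$ acts on $\sfD_3$ through $\bbG_m$ and hence non-trivially on the base of every pencil; then the Lefschetz formula together with the list of extremal rational genus-one fibrations and the KKM classification isolates the exceptional graphs. Your Lefschetz/KKM sketch for (3)--(4) is in the right spirit, but it omits the bielliptic input that forces every fibration to be extremal, which is precisely what places the surface on the KKM list.
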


The proof of the above results will make use of \emph{bielliptic maps}, which will be recalled in Section $5$. Before this, in Section $2-4$, we give the necessary background material on numerically trivial automorphisms, on genus one curves and on genus one fibrations of Enriques surfaces. After explaining the classification of \emph{extra-special} Enriques surfaces in Section $6$, we prove our main results in Sections $7$ and $8$.
% The groups occuring for these exceptions are the cyclic groups of order $n \in \{1,3,5,7,11\}$ and the quaternion group $Q_8$ of order $8$.
% 
% we prove that the Sylow $2$-subgroup of $\Aut_{\ct}(S) = \Aut_{\nt}(S)$ is a subgroup of a double central extension of a 2-elementary abelian group and elements of odd order generate a cyclic group of order 
% $n\in \{1,3,5,7,11\}$. We show that Enriques surfaces with a cohomological trivial automorphism of odd order $\ne 3$ are realized on certain exceptional surfaces with finite automorphism group and give some necessary conditions for a surface to have a cohomological trivial automorphism of odd order. 
 
 \medskip
\noindent
{\bf Acknowledgement.}
The  authors thank T. Katsura and S. Kond\={o} for many interesting discussions on the subject. Moreover, the second author would like to thank S. Kond\={o} and the Department of Mathematics of Nagoya University for their kind hospitality during his stay there.

\section{Generalities on numerically and cohomologically trivial automorphisms}
Let $S$ be an Enriques surface. It is known that
$$H_{\et}^2(S,\bbZ_l) \cong \NS(S)\otimes \bbZ_l, \quad H^2_{\et}(S,\bbZ_l)/\textrm{torsion}\cong \Num(S)\otimes \bbZ_l,$$
where $\Num(S) =  \NS(S)/(K_S)$ is the group of divisor classes modulo numerical equivalence and $\NS(S)$ is the N\'eron-Severi group that coincides with the Picard group of $S$ (see \cite{CD}, Chapter 1, \S 2). The automorphism group $\Aut(S)$ is discrete in the sense that the connected component of the identity of the scheme of automorphisms $\bfAut_{S/\Bbbk}$ of $S$ consists of one point, and admits natural representations
$$\rho:\Aut(S)\to \Or(\NS(S)), \quad \rho_n:\Aut(S)\to \Or(\Num(S)),$$
in the group of  automorphisms of the corresponding abelian groups preserving the intersection form. We set
$$\Aut_{\ct}(S) = \Ker(\rho), \quad \Aut_{\nt}(S) = \Ker(\rho_n).$$
An automorphism in $\Ker(\rho)$ (resp. $\Ker(\rho_n)$) is called \emph{cohomologically trivial} (resp. \emph{numerically trivial}).

We start with the following general result that applies to any  surface with 
discrete scheme of automorphisms and discrete Picard scheme.

\begin{proposition}\label{P7.2.1} The groups $\Aut_{\ct}(S)$ and $\Aut_{\nt}(S)$ are finite groups.
\end{proposition}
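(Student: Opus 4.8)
The plan is to exhibit both groups as subgroups of the automorphism group of a polarized scheme that carries only finitely many automorphisms, using the two standard facts recalled just above: the scheme $\bfAut_{S/\bbk}$ has trivial connected component (so $\Aut(S)$ is discrete), and the Picard scheme is likewise discrete, so $\NS(S) = \Pic(S)$ carries the full Picard group with no continuous part. First I would fix an ample divisor class $h \in \NS(S)$. For $g \in \Aut_{\ct}(S)$ we have $g^*h = h$ in $\NS(S)$, and for $g \in \Aut_{\nt}(S)$ we have $g^*h \equiv h$ numerically; since on an Enriques surface $\NS(S) = \Num(S) \oplus (\text{2-torsion generated by } K_S)$ up to finite index — more precisely the torsion subgroup of $\NS(S)$ has order at most $2$ — in either case $g^*h$ lies in the finite set $\{h, h + K_S\}$. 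Replacing $h$ by $2h$ if necessary, we may assume $g^*h = h$ for all $g$ in the group in question, so $\Aut_{\ct}(S)$ and $\Aut_{\nt}(S)$ are contained in the subgroup $\Aut(S,h)$ of automorphisms fixing an ample class.

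Next I would invoke the standard quasi-projectivity/finiteness statement: for a smooth projective surface (more generally a scheme with discrete automorphism scheme and discrete Picard scheme), the group $\Aut(S,h)$ of automorphisms fixing an ample class $h$ is finite. The argument is the familiar one of Matsusaka–Mumford type: choosing $h$ very ample embeds $S \hookrightarrow \bbP^N$, and automorphisms fixing $h$ are induced by elements of $\PGL_{N+1}(\bbk)$ preserving the image, hence form a closed subgroup scheme $G$ of $\PGL_{N+1}$; its Lie algebra is $H^0(S, T_S)$-adjacent, and discreteness of $\bfAut_{S/\bbk}$ forces $G$ to be $0$-dimensional, while properness of $S$ forces $G$ to be of finite type, so $G(\bbk) = \Aut(S,h)$ is finite. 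Since $\Aut_{\ct}(S)$ and $\Aut_{\nt}(S)$ are subgroups of this finite group, they are finite.

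The only genuinely delicate point is the passage from "acts trivially on $\NS(S)$ (or $\Num(S)$)" to "fixes an actual ample class," i.e. controlling the torsion of $\NS(S)$; this is where I would be careful to cite that for an Enriques surface $\NS(S)$ is a finitely generated group whose torsion subgroup is $\bbZ/2\bbZ$ (classical case) or $0$ (when $K_S = 0$), so the orbit of any divisor class under $\Aut_{\nt}(S)$ is finite. Everything else — the finiteness of $\Aut(S,h)$ — is a general principle and I would simply quote it (e.g. from the discreteness of the automorphism scheme asserted in the text, combined with boundedness of automorphisms of a polarized projective variety). I expect this torsion bookkeeping, rather than any hard geometry, to be the main thing to get right; the rest is formal.
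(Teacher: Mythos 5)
Your proof is correct and follows essentially the same route as the paper: both reduce to the finiteness of the group of automorphisms fixing a (very) ample line bundle, which is then realized, via linearization and the embedding by the complete linear system, as a linear algebraic group of projective transformations whose discreteness (from the $0$-dimensionality of $\bfAut_{S/\Bbbk}$) forces finiteness. The only cosmetic difference is that you handle $\Aut_{\nt}(S)$ by replacing $h$ with $2h$ to kill the $K_S$-ambiguity, whereas the paper first deduces $\Aut_{\nt}(S)/\Aut_{\ct}(S) \subseteq \Tors(\NS(S))^{\oplus \rho(S)}$ from the semidirect product decomposition of $\Or(\NS(S))$ and then only treats $\Aut_{\ct}(S)$.
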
 

\begin{proof} We know that $\NS(S) = \Pic(S)$ and $\Num(S)$ is 
the quotient of 
$\NS(S)$ by its finite torsion subgroup  $\Tors(\NS(S))$. Thus, the elementary theory of abelian groups gives us 
$$
\Or(\NS(S)) \cong \Hom(\Num(S),\Tors(\NS(S)))\rtimes \Or(\Num(S)).
$$
This implies that 
\beq\label{r11}
\Aut_{\nt}(S)/\Aut_{\ct}(S) \subseteq \Tors(\NS(S))^{\oplus \rho(S)}.
\eeq 
So, it is enough to prove that $G = \Aut_{\ct}(S)$ is a finite group. The group acts trivially on $\Pic(S)$, hence leaves invariant any very 
ample invertible sheaf $\calL$. For any $g\in G$ let $\alpha_g:g^*(\calL)\to \calL$ be an isomorphism. Define a structure of a group on the 
set $\tilde{G}$ of pairs $(g,\alpha_g)$ by 
$$(g,\alpha_g)\circ (g',\alpha_{g'}) = (g\circ g', \alpha_{g'}\circ g'{}^*(\alpha_g)).$$
The homomorphism $(g,\alpha_g)\to g$ defines an isomorphism $\tilde{G}\cong \Bbbk^*\rtimes G$. 
The sheaf $\calL$ admits a natural $\tilde{G}$-linearization, and hence the group $\tilde{G}$ acts linearly on the space $H^0(S,\calL)$ and the action defines  an injective
homomorphism $G\to \Aut(\bbP(H^0(S,\calL))$. The group of projective transformations of $S$ embedded by $|\calL|$ is a linear algebraic group that has finitely many connected components. We know that $G$ is discrete. Thus, the group $G$ is finite.
\end{proof}

In our case, when $S$ is an Enriques surface, we know that the torsion subgroup of $\NS(S)$ is generated by the canonical class $K_S$ and  $2K_S = 0$. Moreover,   $K_S\ne 0$ if $p\ne 2$. Recall that, in characteristic $2$, Enriques surfaces come in three types:
\begin{itemize}
\item classical  surfaces,
\item ordinary Enriques surfaces or $\bmu_2$-surfaces,
\item supersingular surfaces or $\balpha_2$-surfaces
\end{itemize}
Surfaces of the first type are characterized by the condition $K_S\ne 0$ if $p =2$. Surfaces of the second and the third types  satisfy $K_S = 0$. They are distinguished by the action of the Frobenius endomorphism on the cohomology space $H^2(S,\calO_S) \cong \Bbbk$. It is trivial in the third case and it is not trivial in the second case.

Applying \eqref{r11}, we obtain the following.

\begin{corollary}\label{C7.2.5} The quotient group $\Aut_{\nt}(S)/\Aut_{\ct}(S)$ is a $2$-elementary abelian group.
\end{corollary}

\section{Half-fibers of genus one fibrations}
Recall that an Enriques surface always admits a fibration $f:S\to \bbP^1$  with general fiber $S_\eta$ an elliptic curve or a quasi-elliptic curve over the field $K$ of rational functions on $\bbP^1$ (i.e. a regular non-smooth irreducible curve of arithmetical genus one) (see \cite{CD}, Corollary 3.2.1). To treat both cases, we call such a fibration a \emph{genus one fibration}, specifying when needed whether it is an \emph{elliptic fibration} or a \emph{quasi-elliptic fibration}. 

A genus one fibration is defined by a base-point-free  pencil $|D|$ of divisors of arithmetic genus one satisfying $D^2 = 0$. The numerical class $[D]$ in $\Num(S)$ is always divisible by two, so $D = 2F$, where $[F]$ is a primitive isotropic vector in the lattice $\Num(S)$. There are two representatives $F,F'$ of $[F]$ if $p\ne 2$ or $S$ is classical Enriques surface in characteristic $2$. Otherwise, there is only one representative. We call these representatives \emph{half-fibers} of $|2F|$, of the pencil or of the corresponding fibration. 

Conversely, let $W_S^{\nod}$ be the group of isometries of $\Num(S)$ generated by reflections into the classes of smooth rational curves (\emph{$(-2)$-curves}, for short). Any primitive isotropic vector in $\Num(S)$ can be transformed by an element of  $W_S^{\nod}$ to the numerical class of a half-fiber. Hence, any nef divisor $F$ such that $[F]$ is a primitive isotropic vector in $\Num(S)$ defines a genus one pencil $|2F|$ and a corresponding genus one fibration $f:S\to \bbP^1$. An Enriques surface is called \emph{unnodal} if it does not contain $(-2)$-curves. In this case $W_S^{\nod} = \{1\}$ and there is a bijective correspondence between primitive isotropic vectors in $\Num(S)$ and genus one fibrations on $S$.

A general fiber of an elliptic (resp. quasi-elliptic) fibration is a smooth elliptic curve (resp. irreducible curve of arithmetic genus one with one ordinary cusp). We will use  the notation for singular fibers of elliptic fibrations (resp. reducible fibers of quasi-elliptic fibrations) 
$$\tilde{A}_0^*,\ \tilde{A}_{n-1},\ \tilde{D}_{n+4},\ \tilde{A}_0^{**},\ \tilde{A}_1^*,\ \tilde{A}_{2}^*,\ \tilde{E}_{8},\ \tilde{E}_{7}, \ \tilde{E}_{6}$$
from \cite{CD}. They correspond to Kodaira's notations $I_1,I_n,I_n^*,II,III,IV,II^*,III^*,IV^*$. Fibers of type $I_n$ are called of \emph{multiplicative type}, all others of \emph{additive type}. The notation indicates the relationship with Dynkin diagrams of affine root systems. In fact, the dual graph of irreducible components of a reducible fiber coincides with such a diagram.

We have the following (see \cite{CD}, Chapter 5. \S 7).

\begin{proposition}\label{half-fibers} Let $F$ be a half fiber of a genus one fibration on an Enriques surface.
\begin{itemize}
\item If $p\ne 2$ or $S$ is an ordinary Enriques surface in characteristic $2$, then $F$ is of multiplicative type or a smooth elliptic curve, which is ordinary if $p = 2$.
\item If $p = 2$ and $K_S\ne 0$, then $F$ is of additive type or a smooth ordinary elliptic curve. 
\item If $p = 2$ and $S$ is a supersingular Enriques surface, then $F$ is of additive type or a supersingular elliptic curve.
\end{itemize}
\end{proposition}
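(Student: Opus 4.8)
The plan is to reduce the assertion to two ingredients: a computation of the normal bundle $\calO_F(F)$ via adjunction, and the classification of the three types of Enriques surfaces by their Picard scheme $\Pic^\tau_S$, measured against the possible $2$-torsion in the Picard scheme of a genus one curve. Throughout, $F$ is a half-fiber, hence a fiber of multiplicity $2$ of the relatively minimal genus one fibration $f\colon S\to\bbP^1$ attached to the pencil $|2F|$; in particular $F$ is a connected curve of arithmetic genus one, so it is one of the Kodaira fiber types listed above, and $\omega_F\cong\calO_F$. Adjunction gives $\omega_F\cong\omega_S|_F\otimes\calO_F(F)$. Since $\omega_S^{\otimes 2}\cong\calO_S$, and since a general member $G\in|2F|$ is a fiber disjoint from $F$, so that $\calO_S(2F)|_F\cong\calO_S(G)|_F\cong\calO_F$, one obtains $\calO_F(F)\cong(\omega_S|_F)^{-1}$, a $2$-torsion class which moreover has degree $0$ on every component of $F$, hence lies in the identity component $\Pic^{(0)}(F)$.

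The next step is to tabulate $\Pic^{(0)}(F)[2]$ for each type: it is $(\bbZ/2\bbZ)^2$ (resp. $\bbZ/2\bbZ$, resp. $0$) when $F$ is a smooth elliptic curve with $p\neq 2$ (resp. $p=2$ and $F$ ordinary, resp. $p=2$ and $F$ supersingular); it is $\mu_2(\bbk)=\bbZ/2\bbZ$ for a fiber of multiplicative type $\tilde A_{n-1}$ when $p\neq 2$, and $0$ when $p=2$; and it equals $\bbG_a(\bbk)[2]$, which is $0$ when $p\neq 2$ and nonzero when $p=2$, for every fiber of additive type. The argument then splits according to whether $\calO_F(F)$ is trivial. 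If $S$ is classical --- in particular whenever $p\neq 2$ --- then $h^1(\calO_S)=0$, and since a half-fiber satisfies $h^0(\calO_S(F))=1$, the cohomology sequence of $0\to\calO_S\to\calO_S(F)\to\calO_F(F)\to 0$ forces $\calO_F(F)\not\cong\calO_F$. Hence $\Pic^{(0)}(F)[2]\neq 0$, and the table immediately excludes the additive singular types when $p\neq 2$, proving the first bullet in that case, and excludes the multiplicative types and the supersingular elliptic curves when $p=2$, proving the second bullet (a smooth elliptic half-fiber must then be ordinary).

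It remains to treat the ordinary and supersingular surfaces in characteristic $2$. Here $h^1(\calO_S)=1$, the normal bundle $\calO_F(F)$ may be trivial, and the numerical argument above no longer applies, so the classification must use the finer structure theory of the (necessarily wild) multiplicity-$2$ fiber $F$ together with the identifications $\Pic^\tau_S\cong\mu_2$ for ordinary and $\Pic^\tau_S\cong\alpha_2$ for supersingular Enriques surfaces. The point is that the local classification of the multiple fiber, or equivalently the restriction to $F$ of the canonical $\Pic^\tau_S$-torsor $\tilde S\to S$, constrains $\Pic^{(0)}(F)$ (and its infinitesimal structure along $2F$) to be compatible with $\Pic^\tau_S$: of multiplicative type $\bbG_m$ in the ordinary case, of additive (local-local) type $\bbG_a$ in the supersingular case. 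This rules out the additive singular types and the supersingular elliptic curves when $S$ is ordinary, and the multiplicative types and the ordinary elliptic curves when $S$ is supersingular, which is exactly the content of the remaining parts of the proposition.

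I expect this last step to be the main obstacle: it is the only place where the characteristic-$2$ dichotomy $\mu_2$ versus $\alpha_2$ genuinely enters, and it requires pinning down the type of a wild multiple fiber in terms of $\Pic^\tau_S$ rather than merely from the order of its normal bundle. The required input here is the analysis of multiple fibers of genus one fibrations in positive characteristic, for which one can appeal to \cite{CD}, Chapter 5, \S 7.
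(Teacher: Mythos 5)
The paper itself gives no proof of this proposition --- it is quoted from \cite{CD}, Chapter 5, \S 7 --- so there is no internal argument to compare with; judged on its own, your proposal is half of a proof. The first half is correct and complete: $\calO_F(F)$ is $2$-torsion of multidegree zero, and for a classical surface (all $p\neq 2$, or $p=2$ with $K_S\neq 0$) the sequence $0\to\calO_S\to\calO_S(F)\to\calO_F(F)\to 0$ together with $h^1(\calO_S)=0$ and $h^0(\calO_S(F))=1$ shows $\calO_F(F)\not\cong\calO_F$, and your torsion table then yields the first bullet for $p\neq 2$ and the whole second bullet. (You implicitly use that for a curve of canonical type the multidegree-zero part of $\Pic(F)$ is $\bbG_m$, an elliptic curve, or unipotent, and that a multidegree-zero bundle with a nonzero section on such a curve is trivial; both are standard and worth a word.)

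The gap is in the non-classical characteristic-$2$ case, and you flag it yourself. The statement that the restriction of the canonical $\mu_2$- or $\alpha_2$-torsor to $F$ ``constrains $\Pic^{0}(F)$ to be compatible with $\Pic^\tau_S$'' has content only if that restricted torsor (or its class on a suitable infinitesimal neighbourhood of $F$) is nontrivial, and nothing in your argument establishes this; a torsor on $S$ can very well become trivial over a divisor, so the constraint does not come for free. Since you then appeal to \cite{CD}, Chapter 5, \S 7 --- the very reference the paper cites for the entire proposition --- this part of the proposal does not supply a proof. A clean way to close it along your lines: when $K_S=0$ one has $h^0(\calO_S(K_S+F))=h^0(\calO_S(F))=1$, so Riemann--Roch and Serre duality give $h^1(\calO_S(-F))=0$; hence the restriction map $H^1(\calO_S)\to H^1(\calO_F)$ is injective, so it is an isomorphism of one-dimensional spaces compatible with the Frobenius. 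If $S$ is ordinary, the Frobenius is bijective on $H^1(\calO_S)$, hence on $H^1(\calO_F)$, which forces $F$ to be of multiplicative type or an ordinary elliptic curve; if $S$ is supersingular, the Frobenius is zero there, which forces $F$ to be of additive type or a supersingular elliptic curve. With that replacement your argument becomes a complete and self-contained proof of all three bullets.
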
  

A  $(-2)$-curve is called a \emph{special bisection} of a half-fiber $F$ or of the corresponding pencil $|2F|$, or of the corresponding genus one fibration, if it intersects $F$ with multiplicity $1$. 

A relatively minimal model of the Jacobian variety $J_\eta$ of the generic fiber $S_\eta$ of an elliptic fibration is a rational elliptic surface $j:J\to \bbP^1$. The group $J_\eta(\eta)$ is called the \emph{Mordell-Weil group} of the elliptic fibration. It is a finitely generated abelian group. It acts on $S_\eta$ by translation, and by the properties of a relative minimal model, the action extends to a biregular action on $S$. 

The type of a singular fiber $J_t$ of $j:J\to \bbP^1$ coincides with the type of the fiber $S_t$ (see \cite{CD}, Theorem 5.3.1 and \cite{Lorenzini}, Theorem 6.6). Similarly, if the fibration is quasi-elliptic, the Jacobian variety $J_\eta$ of its general fiber is a unipotent group scheme, a non-trivial inseparable form of the additive group scheme. Its Mordell-Weil group is a finite $p$-elementary abelian group. The theory of minimal models of surfaces allows us to construct a rational surface with a quasi-elliptic fibration whose generic fiber with the singular point deleted is isomorphic to $J_\eta$. 

An ordered  sequence $(f_1,\ldots,f_n)$ of isotropic vectors in $\Num(S)$ with $f_i\cdot f_j = 1-\delta_{ij}$ and $f_i\cdot h > 0$ for the class of an ample divisor $h$ can always be transformed by an element $w\in W_S^{\nod}$ to a sequence where $f_1+\cdots+f_n$ is the class of a nef divisor. A lift  $(F_1,\ldots,F_n)$ of such a sequence to $\NS(S)$ is called a \emph{$U_{[n]}$-sequence}. After reordering, we may assume that  $F_1$ is a half-fiber of a genus one  fibration and either $F_{i+1} = F_i + R$, where $R$ is a $(-2)$-curve with $R\cdot F_i = 1$ or $F_{i+1}$ is a half-fiber of a genus one fibration. A $U_{[n]}$-sequence is called \emph{c-degenerate}, if it contains exactly $c$ half-fibers.  If $c = n$, it is called \emph{non-degenerate}. We say that a $U_{[m]}$-sequence $A$ \emph{extends} a $U_{[n]}$-sequence $B$ if, after reordering, $A$ contains $B$. For a given Enriques surface $S$, the maximal length of a non-degenerate $U_{[n]}$-sequence is denoted by $\nd(S)$ and is called the \emph{non-degeneracy invariant} of $S$.

\begin{remark}\label{-2curves}
Note that, by definition, the $R_i$ that occur in a $U_{[n+1]}$-sequence of the form $(F_1,F_1+R_1,\hdots,F_1+\sum_{i=1}^{n} R_i)$ form a Dynkin diagram of type $A_n$ and the $R_i$ with $i \geq 2$ are contained in fibers of $|2F_1|$.
\end{remark}

For the following Proposition, see \cite{CD} Corollary 3.3.1.

\begin{proposition}\label{extend}
Let $n \leq 8$. Then, any $c$-degenerate $U_{[n]}$-sequence can be extended to a $c'$-degenerate $U_{[10]}$-sequence with $c' \geq c$.
\end{proposition}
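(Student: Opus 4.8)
The plan is to separate the argument into a lattice-theoretic extension step and an effective-realization step. First note that, by the definition of ``extends'', every half-fiber occurring in the given $U_{[n]}$-sequence $B = (F_1, \dots, F_n)$ still occurs in any $U_{[10]}$-sequence $A$ containing (a reordering of) $B$; hence $c' \geq c$ is automatic, and it suffices to produce \emph{some} $U_{[10]}$-sequence containing $B$. By the description of $U_{[n]}$-sequences recalled above, this amounts to: \emph{(i)} finding isotropic classes $f_{n+1}, \dots, f_{10} \in \Num(S)$ with $f_i \cdot f_j = 1 - \delta_{ij}$ for all $i,j$, where we set $f_j := [F_j]$ for $j \leq n$; and \emph{(ii)} finding $w \in W_S^{\nod}$ fixing $F_1, \dots, F_n$ and carrying $D := \sum_{i=1}^{n} F_i + \sum_{j=n+1}^{10} f_j$ into the nef cone. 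Indeed, one then lifts the classes $w(f_j)$ to $\NS(S)$; since $F_1$ remains a half-fiber, the resulting ordered $10$-tuple is a $U_{[10]}$-sequence extending $B$.

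For \emph{(i)}, recall that $\Num(S)$ is the even unimodular lattice $U \oplus E_8$ of signature $(1,9)$ (see \cite{CD}); we may assume $n \geq 2$, since for $n = 1$ one first extends $B$ by any isotropic class meeting $F_1$ in $1$. As $F_1, F_2$ are isotropic with $F_1 \cdot F_2 = 1$, they span a copy of $U$, which—being unimodular—is an orthogonal direct summand, so $\Num(S) = \langle F_1, F_2\rangle \oplus E$ with $E$ the unique even negative definite unimodular lattice of rank $8$, i.e. $E \cong E_8$. Writing $F_i = F_1 + F_2 + w_i$ for $3 \leq i \leq n$, the relations $F_i \cdot F_j = 1 - \delta_{ij}$ translate into: the $w_i \in E$ are roots ($w_i^2 = -2$) with $w_i \cdot w_j = -1$ for $i \neq j$. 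Thus \emph{(i)} reduces to the statement that any set of at most $n - 2 \leq 6$ such roots of $E_8$ completes to a set of $8$ of them—the maximum possible, since such a set spans a nondegenerate rank-$8$ sublattice. Equivalently, $\langle F_1, \dots, F_n\rangle \cong U_{[n]}$ sits, compatibly with bases, inside a sublattice of $\Num(S)$ isometric to $U_{[10]}$; this follows from the uniqueness up to isometry of the embedding $U_{[n]} \hookrightarrow \Num(S)$ for $n \leq 10$ (proved in \cite{CD} via Nikulin's theory of discriminant forms) together with the existence of the standard $U_{[10]}$ in $\Num(S)$.

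Step \emph{(ii)} is the technical heart, and the step I expect to be the main obstacle. One must show that the reflections carrying $D$ into the nef cone can be chosen to fix the already-prescribed effective classes $F_1, \dots, F_n$; otherwise the transformed sequence would no longer contain $B$. This is precisely where $n \leq 8$ enters: the orthogonal complement of $\langle F_1, \dots, F_n\rangle$ in $\Num(S)$ still has rank $10 - n \geq 2$, and the stabiliser of $F_1, \dots, F_n$ in $W_S^{\nod}$ is generated by reflections in the $(-2)$-curves meeting every $F_i$ trivially; using that $\sum_{i=1}^{n} F_i$ is already nef, one adjusts $f_{n+1}, \dots, f_{10}$ within the orbit of this stabiliser until $D$ becomes nef. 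The detailed verification is carried out in \cite{DL}, Proposition 5.1.5. Once $D$ is nef, lifting to $\NS(S)$ yields a $U_{[10]}$-sequence whose first $n$ entries are $F_1, \dots, F_n$, and the proof is complete.
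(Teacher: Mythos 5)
The paper gives no proof of this proposition at all: it is stated with the citation ``see \cite{DL} Proposition 5.1.5'', so there is nothing internal to compare your argument against. Your reduction is sensible as a roadmap --- the observation that $c' \geq c$ is automatic is correct (being a half-fiber is intrinsic to the class, so the $c$ half-fibers of $B$ remain half-fibers as entries of $A$), and the lattice-theoretic step \emph{(i)} (writing $F_i = F_1 + F_2 + w_i$ and completing a configuration of roots $w_i$ in $E_8$ with $w_i^2=-2$, $w_i\cdot w_j=-1$) is a standard and correct translation. But as a proof your proposal is circular at exactly the point that matters: you declare step \emph{(ii)} to be ``the technical heart'' and then defer its verification to \cite{DL}, Proposition 5.1.5 --- which is the statement being proved. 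So you have not produced an independent proof; you have reproduced the paper's citation with a preamble.

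Moreover, the strategy you sketch for step \emph{(ii)} has a genuine gap. You propose to minimize $(\sum_{j>n} f_j)\cdot h$ over the orbit of the subgroup $W' \subseteq W_S^{\nod}$ generated by reflections in $(-2)$-curves orthogonal to all of $F_1,\dots,F_n$, and conclude that the minimizer makes $D$ nef. The obstruction is a $(-2)$-curve $R$ with $R\cdot D<0$ but $R\cdot F_i>0$ for some $i\le n$: since $\sum_{i\le n}F_i$ is nef this forces $R\cdot\sum_{j>n}f_j\le -2$, which is perfectly possible because the classes $f_{n+1},\dots,f_{10}$ produced in step \emph{(i)} are merely lattice vectors with no positivity, and the reflection $s_R$ that would repair this is not in $W'$ (it moves $F_i$). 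So the minimization can stall at a non-nef $D$. This is precisely why the actual argument (in \cite{Cossec} and \cite{DL}) extends the sequence one entry at a time, choosing each new entry to be nef from the outset (a half-fiber, or a half-fiber plus a chain of $(-2)$-curves), at the cost of possibly introducing degenerate entries --- which is why the statement is phrased as ``$c'$-degenerate with $c'\ge c$'' rather than promising a non-degenerate extension. If you want a self-contained proof, that inductive construction is the missing content.
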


It is a much more difficult  question whether a non-degenerate $U_{[n]}$-sequence can be extended to a non-degenerate $U_{[m]}$-sequence (see e.g. Section $5$). However, the following is known (see \cite{Cossec}, Theorem 3.5).

\begin{theorem}\label{diagrams} Suppose $p \neq 2$ or $S$ is an ordinary Enriques surface. Then, any half-fiber can be extended to a non-degenerate $U_{[3]}$-sequence. In particular, $\nd(S) \geq 3$.
\end{theorem}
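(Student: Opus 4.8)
The plan is to start with a half-fiber $F_1$ of a genus one fibration $f\colon S\to \bbP^1$ and to produce, step by step, isotropic classes $f_2,\ldots$ in $\Num(S)$ completing $[F_1]$ to a non-degenerate $U_{[3]}$-sequence. The key geometric input is Proposition \ref{half-fibers}: under the hypothesis $p\neq 2$ or $S$ ordinary, $F_1$ is either of multiplicative type or a smooth (ordinary, if $p=2$) elliptic curve. This has two consequences I would exploit. First, the Mordell-Weil group of (the Jacobian of) $f$ acts on $S$ by automorphisms, and in the multiplicative/ordinary-elliptic case the relevant torsion is prime to $p$, so one has enough sections/multisection structure to work with. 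Second, and more importantly, such an $F_1$ is \emph{not} a $2$-torsion obstruction: the two half-fibers $F_1,F_1'$ of $|2F_1|$ are distinct and their difference is $K_S$, which is exactly what makes the lattice-theoretic extension compatible with lifting from $\Num(S)$ to $\NS(S)$.

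The heart of the argument is lattice-theoretic. Working in $\Num(S)$, which is the even unimodular lattice $U\oplus E_8$ of signature $(1,9)$, I would first find an isotropic vector $f_2$ with $f_1\cdot f_2=1$ (e.g. complete $f_1=[F_1]$ to a copy of $U$), and then an isotropic $f_3$ with $f_1\cdot f_3=f_2\cdot f_3=1$; such triples exist purely by lattice theory since $U\oplus E_8$ contains $U\oplus U\oplus E_8$-type configurations, or more simply one checks that $f_1,f_2$ span a $U$ and its orthogonal complement $E_8$ still contains suitable vectors. The subtlety is that an abstract isotropic class need not be nef; here I invoke the standard fact recalled in Section $3$ that any primitive isotropic vector can be moved by an element of $W_S^{\nod}$ to the class of a half-fiber, hence to a nef class defining a genus one fibration. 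Applying this and using Proposition \ref{extend} (or rather the extension mechanism behind it) lets me upgrade the lattice-theoretic triple to one realized by genuine half-fibers or by chains $F_{i+1}=F_i+R$ with $R$ a $(-2)$-curve — i.e. to an honest $U_{[3]}$-sequence in $\NS(S)$.

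The remaining and genuinely delicate point — the step I expect to be the main obstacle — is guaranteeing \emph{non-degeneracy}, i.e. that all three members can be taken to be half-fibers rather than merely that the sequence is $c$-degenerate for some $c\geq 1$. The issue is that when one extends $F_1$ to $F_2=F_1+R_1$, the class $F_2$ is a half-fiber only if $F_1+R_1$ is already nef; otherwise one only knows $F_2$ lies in a $U_{[2]}$-sequence that might be $1$-degenerate. To force non-degeneracy I would argue as follows: use the hypothesis on $F_1$ to pass to the Jacobian elliptic surface $j\colon J\to\bbP^1$ (a rational elliptic surface), where the combinatorics of singular fibers and the Mordell-Weil lattice control which isotropic classes are represented by half-fibers; then transfer back via the identification of fiber types between $J$ and $S$ (cited in Section $3$). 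Concretely, one shows that the specific isotropic vectors $f_2,f_3$ produced above can be chosen orthogonal to no $(-2)$-curve that would obstruct nefness after a single reflection, equivalently that they lie in the closure of the ample cone up to the $W_S^{\nod}$-action in a way that respects the half-fiber condition. This is exactly the content of \cite{Cossec}, Theorem 3.5 / \cite{DL}, Theorem 5.1.17, and in the write-up I would either reproduce that argument or cite it, noting that the characteristic hypothesis enters precisely because in the classical characteristic $2$ case a half-fiber can be of additive type with only one representative, which breaks the lifting from $\Num(S)$ to $\NS(S)$ and invalidates the non-degeneracy step.
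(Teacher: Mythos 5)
The paper does not actually prove Theorem \ref{diagrams}: it is quoted verbatim from the literature (``see \cite{Cossec}, Theorem 3.5 or \cite{DL}, Theorem 5.1.17''). Your proposal, after setting up a framework, defers the one genuinely hard step --- non-degeneracy --- to exactly those same references (``This is exactly the content of \cite{Cossec}, Theorem 3.5 \dots I would either reproduce that argument or cite it''). As a proof of the theorem this is circular: the preliminary material you supply (existence of isotropic triples in $U\oplus E_8$ with $f_i\cdot f_j=1-\delta_{ij}$, and the reduction of an isotropic sequence to canonical form by $W_S^{\nod}$) is already built into the definition of a $U_{[n]}$-sequence in Section 3 and into Proposition \ref{extend}, and is not where the content of the theorem lies. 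The entire theorem \emph{is} the non-degeneracy claim, whose proof in \cite{Cossec} is a substantial case-by-case analysis of the reducible fibers of $|2F_1|$; nothing in your sketch replaces it. There is also a technical slip in the reduction: you cannot move $f_2$ and $f_3$ separately by elements of $W_S^{\nod}$, since that destroys the relations $f_1\cdot f_2=f_1\cdot f_3=f_2\cdot f_3=1$; one must act on the whole triple by a single $w$, and because $F_1$ is already nef and must be preserved one is restricted to the stabilizer of $[F_1]$ in $W_S^{\nod}$ --- which is precisely why the fiber combinatorics of $|2F_1|$ enter.

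Your closing explanation of why the hypothesis ``$p\neq 2$ or $S$ ordinary'' is needed is also incorrect. In characteristic $2$ a \emph{classical} surface has \emph{two} representatives $F,F'$ of each half-fiber class (their difference is $K_S\neq 0$); it is the ordinary and supersingular surfaces that have only one --- and the ordinary case is \emph{included} in the theorem, so ``only one representative breaks the lifting from $\Num(S)$ to $\NS(S)$'' cannot be the obstruction. The actual reason the excluded cases fail is geometric, not lattice-theoretic: by Section 6 there exist extra-special classical and supersingular surfaces with $\nd(S)\leq 2$, for which the conclusion is simply false, and the correct substitutes are the weaker Theorems \ref{diagrams2} and \ref{extraspecialclassification}. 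The relevant feature of the excluded cases is that half-fibers can be of additive type (Proposition \ref{half-fibers}), e.g.\ for quasi-elliptic fibrations, which is what makes the extra-special configurations possible.
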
 

% in the former case, the genus one pencils $|2F_1|$ and $|2F_2|$ define elliptic fibrations and in the latter case they define quasi-elliptic pencils.

\begin{lemma}\label{nocommon} Let $F_1,F_2$ form a non-degenerate $U_{[2]}$-pair. Then, $F_1$ and $F_2$ do not have common irreducible components.
\end{lemma}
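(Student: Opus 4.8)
The plan is to assume $F_1$ and $F_2$ share an irreducible component and derive a contradiction by exhibiting an effective divisor whose self-intersection is forced to be both nonnegative (by adjunction) and negative (by the Hodge index theorem).

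Concretely, I would write $F_1 = \sum_i a_i C_i$ and $F_2 = \sum_i b_i C_i$ as effective divisors over the finitely many irreducible curves $C_i$ occurring in either, with $a_i,b_i\ge 0$, and set $D := \sum_i \min(a_i,b_i)\,C_i$. The assumption says $D\ne 0$; from the definition, $F_1-D$ and $F_2-D$ are effective with no common component, and every component of $D$ occurs in both $F_1$ and $F_2$. Since $F_j$ is a half-fiber, $2F_j$ is a fiber of the genus one fibration $|2F_j|$, so every component of $F_j$ is a fiber component and therefore orthogonal to $2F_j$; hence $F_j\cdot C_i = 0$ whenever $C_i$ is a component of $F_j$, and in particular $F_1\cdot D = F_2\cdot D = 0$. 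Expanding $(F_1-D)\cdot(F_2-D) = F_1\cdot F_2 - F_1\cdot D - F_2\cdot D + D^2 = 1 + D^2$, and the left-hand side is $\ge 0$ because the two divisors are effective with no common component, so $D^2\ge -1$. Adjunction on $S$ together with $D\cdot K_S = 0$ (the canonical class is numerically trivial) gives $D^2 = 2p_a(D)-2 \in 2\bbZ$, so in fact $D^2\ge 0$.

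For the other half of the contradiction I would use that $(F_1+F_2)^2 = 2 > 0$: by the Hodge index theorem the orthogonal complement of $F_1+F_2$ in $\Num(S)\otimes\bbQ$ (a lattice of signature $(1,9)$) is negative definite. Now $D\cdot(F_1+F_2) = F_1\cdot D + F_2\cdot D = 0$, and $[D]\ne 0$ in $\Num(S)$ since $D$ is a nonzero effective divisor, so $D^2 < 0$. This contradicts $D^2\ge 0$, and the lemma follows.

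The one step that needs care is the bookkeeping around $D = \sum_i\min(a_i,b_i)C_i$: one must check that $F_1-D$ and $F_2-D$ really have disjoint supports among irreducible components (so that $(F_1-D)\cdot(F_2-D)\ge 0$) and that every component of $D$ lies in both $F_1$ and $F_2$ (so that $F_1\cdot D = F_2\cdot D = 0$). Both are immediate from the definition of $D$, but they are exactly what unlocks the two opposing estimates on $D^2$; everything else is a direct appeal to adjunction and to the Hodge index theorem on the rank-$10$ lattice $\Num(S)$.
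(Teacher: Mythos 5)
Your proof is correct. You and the paper start from the same decomposition --- the maximal common effective subdivisor $D$ of $F_1$ and $F_2$ --- and both derive the inequality $(F_1-D)\cdot(F_2-D)\ge 0$, which after expanding with $F_1\cdot D=F_2\cdot D=0$ gives $D^2\ge -1$ (in the paper's notation, $D_1\cdot D_2\le 1$, which is the same statement since $D\cdot(F_1-D)=-D^2$). Where you genuinely diverge is in producing the opposing bound. The paper invokes numerical $2$-connectedness of the half-fiber, i.e.\ $D\cdot(F_1-D)\ge 2$ for any proper effective decomposition, which it justifies by the strict negativity of $D^2$ for a proper nonzero subdivisor of a fiber (negative semi-definiteness of the fiber sublattice) together with evenness. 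You instead apply the Hodge index theorem to the orthogonal complement of the positive class $F_1+F_2$ to conclude $D^2<0$ from $D\cdot(F_1+F_2)=0$ and $[D]\ne 0$ in $\Num(S)$, and then use evenness (via adjunction and the numerical triviality of $K_S$) to upgrade this to $D^2\le -2$. The two routes are arithmetically equivalent, but yours is a little more self-contained: the Hodge index theorem hands you $D^2<0$ without needing to argue separately that $D$ is not numerically proportional to the fiber class, which is the slightly delicate point hidden in the paper's claim that $D_1^2<0$. The two bookkeeping points you flag at the end are indeed immediate from the definition of $D$ and are correctly used.
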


\begin{proof} We use that  a fiber $F_1$ is numerically $2$-connected, i.e. if we write $F_1$ as a sum of two proper effective divisors $F_1 = D_1+D_2$,  then  $D_1\cdot D_2 \ge 2$. To see this, we use that $D_1^2 < 0, D_2^2 < 0$ and $F_1^2 = F_1 \cdot D_1 = F_1 \cdot D_2 = 0$. Now, if $D_1$ is the maximal effective divisor with $D_1 \leq F_1$ and $D_1 \leq F_2$ and if we let $F_1 = D_1 + D_2$ and $F_2 = D_1+D_2'$ be decompositions into effective divisors, we have $D_2.D_2' \geq 0$. Therefore $1 = F_1 \cdot F_2= (D_1+D_2)\cdot F_2 = D_2 \cdot F_2 = D_2\cdot D_1+D_2\cdot D_2' \ge D_2\cdot D_1 $, where we use that $D_1.F_2 = 0$. Hence, $D_1 = 0$.
\end{proof}

Let $(F_1,F_2)$ be a non-degenerate $U_{[2]}$-sequence. Since $F_1\cdot F_2 = 1$, by the previous lemma, $F_1\cap F_2$ consists of one point. 

\begin{lemma}\label{L3.8} Let $(F_1,F_2,F_3)$ be a non-degenerate $U_{[3]}$-sequence. Suppose that $|F_2 + F_3 - F_1 + K_S| = \emptyset$.  Then, $F_1\cap F_2\cap F_3 =\emptyset$.
\end{lemma}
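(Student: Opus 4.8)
The plan is to argue by contradiction. Suppose $F_1\cap F_2\cap F_3\neq\emptyset$. Since $F_1\cdot F_2=1$, Lemma~\ref{nocommon} tells us that $F_1\cap F_2$ is a single reduced point, so there is a unique point $x$ lying on all three curves. From this I will produce a nonzero section of $\calO_S(F_2+F_3-F_1+K_S)$, contradicting the hypothesis $|F_2+F_3-F_1+K_S|=\emptyset$. The machinery is a restriction exact sequence along $F_3$ together with Serre duality on $S$.

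The first step is to record the local picture at $x$. For each pair $i\neq j$ one has $F_i\cdot F_j=1$, so by Lemma~\ref{nocommon} the curves $F_i$ and $F_j$ have no common component and their scheme-theoretic intersection has length $1$; hence it is the reduced point $x$, and in particular $x$ is a smooth point of each $F_i$ and $F_i$, $F_j$ meet transversally there. Restricting to $F_3$ the equations defining $F_1$ and $F_2$ therefore yields nonzero sections of $\calO_S(F_1)|_{F_3}$ and of $\calO_S(F_2)|_{F_3}$ whose zero divisor is the Cartier divisor $x$ on $F_3$. Thus both restrictions are isomorphic to $\calO_{F_3}(x)$, and consequently
\[
\calO_S(F_1-F_2)|_{F_3}\;\cong\;\calO_{F_3}.
\]

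The second step is to feed this into the restriction sequence
\[
0\longrightarrow \calO_S(F_1-F_2-F_3)\longrightarrow \calO_S(F_1-F_2)\longrightarrow \calO_{F_3}\longrightarrow 0 .
\]
Since the half-fiber $F_1$ is nef while $(F_1-F_2)\cdot F_1=-1<0$ and $(F_1-F_2-F_3)\cdot F_1=-2<0$, neither $F_1-F_2$ nor $F_1-F_2-F_3$ is effective, so $H^0(\calO_S(F_1-F_2))=H^0(\calO_S(F_1-F_2-F_3))=0$. The long exact cohomology sequence then gives an embedding $\Bbbk=H^0(\calO_{F_3})\hookrightarrow H^1(S,\calO_S(F_1-F_2-F_3))$, so this $H^1$ is nonzero. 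By Serre duality it is dual to $H^1(S,\calO_S(K_S-F_1+F_2+F_3))=H^1(S,\calO_S(F_2+F_3-F_1+K_S))$, which is therefore nonzero. Finally, Riemann--Roch gives $\chi(\calO_S(F_2+F_3-F_1+K_S))=1+\frac12(F_2+F_3-F_1)^2=0$, and again by Serre duality $h^2(\calO_S(F_2+F_3-F_1+K_S))=h^0(\calO_S(F_1-F_2-F_3))=0$; hence $h^0(\calO_S(F_2+F_3-F_1+K_S))=h^1\neq 0$. This contradicts $|F_2+F_3-F_1+K_S|=\emptyset$ and finishes the proof.

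I do not expect a serious obstacle: the steps are all short. The one point deserving a careful word is that $F_1\cdot F_3=1$ genuinely forces $x$ to be a smooth point of $F_3$ (so that $\calO_{F_3}(x)$ is an honest invertible sheaf and the two restrictions above truly coincide); this is exactly where the \emph{non-degeneracy} of $(F_1,F_2,F_3)$ is needed, since it makes each $F_i$ a half-fiber and makes Lemma~\ref{nocommon} applicable to each pair. The remaining inputs — nefness of the half-fibers to exclude effectivity of $F_1-F_2$ and $F_1-F_2-F_3$, and the two Euler-characteristic/Serre-duality computations on the Enriques surface — are routine.
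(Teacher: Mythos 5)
Your proof is correct and follows essentially the same route as the paper: the same restriction sequence for $\calO_S(F_1-F_2)$ along $F_3$, the same nefness argument killing $H^0(\calO_S(F_1-F_2))$ and $H^0(\calO_S(F_1-F_2-F_3))$, and the same Riemann--Roch/Serre-duality computation; you merely run the logic in the opposite direction (deducing $h^1\neq 0$ and hence a section of $\calO_S(F_2+F_3-F_1+K_S)$, rather than using the hypothesis to get $h^1=0$ and contradicting $h^0(\calO_{F_3})=1$). Your extra care in justifying $\calO_S(F_1-F_2)|_{F_3}\cong\calO_{F_3}$ via transversality at the common point is a welcome elaboration of a step the paper leaves implicit.
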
 

\begin{proof} Consider the natural exact sequence coming from restriction of the sheaf $\calO_S(F_1-F_2)$ to $F_3$:
$$0\to \calO_S(F_1-F_2-F_3)\to \calO_S(F_1-F_2)\to \calO_{F_3}(F_1-F_2) \to 0.$$
We have  $(F_1-F_2-F_3)\cdot F_1 = -2$. Since $F_1$ is nef, the divisor class  $F_1-F_2-F_3$ is not effective. Thus, by Riemann-Roch and Serre's Duality,  $h^1(\calO_S(F_1-F_2-F_3)) = 0$ since $h^0(\calO_S(K_S+F_3+F_2-F_1)) = 0$ by assumption.  Now, $h^0(\calO_S(F_1-F_2)) = 0$, because $(F_1-F_2).F_1 = -1$ and $F_1$ is nef. Suppose $F_1\cap F_2\cap F_3 \ne \emptyset$, then $\calO_{F_3}(F_1-F_2)\cong \calO_{F_3}$ and $h^0(\calO_{F_3}(F_1-F_2)) = 1$. It remains to consider the exact sequence of cohomology and get a contradiction.
\end{proof}

\begin{remark} \label{remark1}
Note that for any $D \in |F_2+F_3-F_1 + K_S|$, we have $D^2 = -2$ and $D.F_2 = D.F_3 = 0$, so $D$ consists of $(-2)$-curves contained in fibers of $|2F_2|$ and $|2F_3|$.
\end{remark}

\section{Automorphisms of genus one curves}
Let us recall some known results about automorphism groups of elliptic curves over algebraically closed fields which we will use frequently. 
The proof of the following result can be found in \cite{Silverman}, III, \S 10 and Appendix A.

 \begin{proposition}\label{autoelliptic}  Let $E$ be an elliptic curve over an algebraically closed field with automorphism group $G$ and absolute invariant $j$. For $g \in G$, let $E^g$ be the set of fixed points of $g$.
 \begin{enumerate}
\item If $p\ne 2,3$
\begin{center}
\begin{tabular}{|>{\centering\arraybackslash}m{1.5cm}|>{\centering\arraybackslash}m{3cm}|>{\centering\arraybackslash}m{2.5cm}|>{\centering\arraybackslash}m{2.5cm}|}
\hline
$j$ & $G$ & $\ord(g)$ & \vspace{1mm} $|E^g|$\\ [1mm] \hline
$\neq 0,1$ & $\bbZ/2\bbZ$ & $2$ & $4$ \\ \hline
$1$ & $\bbZ/4\bbZ$ & 
$\begin{cases}
2 \\
4
\end{cases}$ &
$\begin{cases}
4 \\
2
\end{cases}$ \\ \hline
$0$ & $\bbZ/6\bbZ$ & 
$\begin{cases}
2\\
3 \\
6
\end{cases}$ &
$\begin{cases}
4 \\
3 \\
1 
\end{cases}$ \\ \hline
\end{tabular}
\end{center}
  \item If $p = 3$
\begin{center}
\begin{tabular}{|>{\centering\arraybackslash}m{1.5cm}|>{\centering\arraybackslash}m{3cm}|>{\centering\arraybackslash}m{2.5cm}|>{\centering\arraybackslash}m{2.5cm}|}
\hline
$j$ & $G$ & $\ord(g)$ & \vspace{1mm} $|E^g|$\\ [1mm] \hline
$\neq 0$ & $\bbZ/2\bbZ$ & $2$ & $4$ \\ \hline
$0$ & $\bbZ/3\bbZ \rtimes \bbZ/4\bbZ$ & 
$\begin{cases}
2 \\
3 \\
4
\end{cases}$ &
$\begin{cases}
4 \\
1 \\
2
\end{cases}$ \\ \hline
\end{tabular}
\end{center}
 
  \item If $p = 2$
\begin{center}
\begin{tabular}{|>{\centering\arraybackslash}m{1.5cm}|>{\centering\arraybackslash}m{3cm}|>{\centering\arraybackslash}m{2.5cm}|>{\centering\arraybackslash}m{2.5cm}|}
\hline
$j$ & $G$ & $\ord(g)$ & \vspace{1mm} $|E^g|$\\ [1mm] \hline
$\neq 0$ & $\bbZ/2\bbZ$ & $2$ & $2$ \\ \hline
$0$ & $Q_8 \rtimes \bbZ/3\bbZ$ & 
$\begin{cases}
2,4 \\
3
\end{cases}$ &
$\begin{cases}
1 \\
3
\end{cases}$ \\ \hline
\end{tabular}
\end{center}

\end{enumerate}
\end{proposition}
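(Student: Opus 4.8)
The plan is to establish the two assertions of the proposition in turn: first the structure of $G=\Aut(E,O)$ — automorphisms of $E$ as a pointed curve, equivalently as an algebraic group — as a function of the characteristic $p$ and the $j$-invariant, and then, for a nontrivial $g\in G$, the number $|E^g|$ of its fixed points.

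For the group structure I would simply invoke the classical computation in \cite{Silverman}, III, \S 10 and Appendix A. Writing $E$ in generalized Weierstrass form, an automorphism fixing $O$ is an admissible change of coordinates, and solving the resulting equations shows that $G$ has order $2$ generically, order $4$ at the invariant recorded as $j=1$ (classically $j=1728$), order $6$ at $j=0$, and the exceptional orders $12$ and $24$ at the curve where these two special invariants coincide, in characteristics $3$ and $2$ respectively; reading off the multiplication identifies the groups as $\bbZ/2\bbZ$, $\bbZ/4\bbZ$, $\bbZ/6\bbZ$, $\bbZ/3\bbZ\rtimes\bbZ/4\bbZ$ and $Q_8\rtimes\bbZ/3\bbZ$.

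For the fixed-point count, the key point is that a nontrivial $g\in\Aut(E,O)$ is a group homomorphism, so $\id-g$ is a nonzero element of $\operatorname{End}(E)$, hence an isogeny, and the fixed-point subscheme $E^g$ is the scheme-theoretic preimage of $O$ under $\id-g:E\to E$, i.e. $\Ker(\id-g)$; therefore $|E^g|=\deg_s(\id-g)$, the separable degree. This degree is read off from the action of $\operatorname{End}(E)$ on the rank-two space $H^1_{\et}(E,\bbQ_\ell)$: an automorphism of order $m$ acts with eigenvalues $\zeta$ and $\zeta^{-1}$ for a primitive $m$-th root of unity $\zeta$, so $\Tr(g)=\zeta+\zeta^{-1}$ equals $-2,-1,0,1$ for $m=2,3,4,6$, and $\deg(\id-g)=(1-\zeta)(1-\zeta^{-1})=2-\Tr(g)$ equals $4,3,2,1$ respectively — equivalently this is the Lefschetz number $2-\Tr(g\mid H^1_{\et})$. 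Whenever $p\nmid\deg(\id-g)$ the isogeny $\id-g$ is separable and $|E^g|=\deg(\id-g)$, which already settles every case in characteristic $\neq 2,3$, the order-$3$ automorphisms in characteristic $2$, and the orders $2$ and $4$ in characteristic $3$.

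The remaining, and only delicate, step — which I expect to be the main obstacle — is the computation of the inseparable degree $\deg_i(\id-g)$, a power of $p$, in the three cases $(p,\ord(g))\in\{(2,2),(2,4),(3,3)\}$ where $p\mid\deg(\id-g)$. For $(p,m)=(2,2)$ one has $g=-\id$ and $\id-g=[2]$, whose inseparable degree equals $2$ if $E$ is ordinary and $4$ if $E$ is supersingular; since a characteristic-$2$ elliptic curve is ordinary exactly when $j\neq 0$, this yields $|E^g|=2$ for $j\neq 0$ and $|E^g|=1$ for $j=0$. For $(p,m)=(2,4)$ and $(3,3)$, which can only occur on the (supersingular) curve with $j=0$, I would show that $\id-g$ is purely inseparable by examining its action on the one-dimensional space $H^0(E,\Omega^1_E)$ of invariant differentials: $g$ acts there through a character of order dividing $m$, but $\mu_m(\bbk)=\{1\}$ in characteristic $p$ when $p\mid m$, so $g$ acts trivially and $\id-g$ annihilates this space; as $\deg(\id-g)=p$ in both cases, this forces $\deg_i(\id-g)=p$ and $|E^g|=1$. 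Collecting these numbers produces exactly the three tables.
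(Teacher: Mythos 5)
Your argument is correct, and it does substantially more than the paper, which offers no proof at all: the paper simply refers to \cite{Silverman}, III, \S 10 and Appendix A for the entire proposition. The cited sections of Silverman establish the structure of $G=\Aut(E,O)$ (your first step), but the fixed-point column is not literally there, and your route to it is the right one: identify $E^g$ with $\Ker(\id-g)$, compute $\deg(\id-g)=(1-\zeta)(1-\zeta^{-1})=2-\Tr\bigl(g^*\mid H^1_{\et}(E,\bbQ_\ell)\bigr)\in\{4,3,2,1\}$ for $\ord(g)=2,3,4,6$, and then observe that $|E^g|=\deg_s(\id-g)$, so only the wild cases $(p,\ord(g))\in\{(2,2),(2,4),(3,3)\}$ need a separate computation of the inseparable degree. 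Your handling of those cases is correct: for $(2,2)$ one has $g=-\id$, $\id-g=[2]$, and $\deg_s[2]=|E[2](\bbk)|$ equals $2$ or $1$ according as $E$ is ordinary ($j\neq 0$) or supersingular ($j=0$); for $(2,4)$ and $(3,3)$ the pullback $g^*$ multiplies the invariant differential by an $m$-th root of unity, which is forced to be $1$ when $p\mid m$ since Frobenius is injective on $\bbk$, so $(\id-g)^*$ kills $H^0(E,\Omega^1_E)$, the isogeny $\id-g$ of degree $p$ is purely inseparable, and $|E^g|=1$. One small remark: both $\bbZ/3\bbZ\rtimes\bbZ/4\bbZ$ and $Q_8\rtimes\bbZ/3\bbZ$ also contain elements of order $6$, which the printed tables omit; your method disposes of them at no extra cost, since $\deg(\id-g)=1$ gives a single fixed point.
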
 

\section{Bielliptic maps and bielliptic involutions}\label{S:4} Let $(F_1,F_2)$ be a non-degenerate $U_{[2]}$-pair of half-fibers. The linear system $|2F_1+2F_2|$ defines a morphism of degree 2 from $S$ to a surface $\sfD$ of degree $4$ in $\bbP^4$ (it is called a superelliptic map in \cite{CD},  renamed as a bielliptic map in \cite{CDL}). The surface $\sfD$ is an anti-canonical model of a unique (up to isomorphism) weak del Pezzo surface of degree 4 obtained by blowing up 5 points $p_1,\ldots,p_5$ in the projective plane $\bbP^2$.

If $K_S\ne 0$, the point $p_3$ is infinitely near to $p_2$ and $p_5$ is infinitely near to $p_4$. The points $p_1,p_2,p_3$ and $p_1,p_4,p_5$ lie on lines $\ell_1$ and $\ell_1'$. The proper inverse transform of the pencil of lines through $p_1$ and the pencil of conics through $p_2,p_3,p_4,p_5$ on $\bbP^2$ are pencils of conics on $\sfD$. The exceptional curves over $p_3$ and $p_5$ (resp. the line passing through $p_2,p_4$ and the exceptional curve over $p_1$) are the four lines $L_1,L_1'$ (resp. $L_2,L_2'$) on $\sfD$. The proper inverse transforms  of the two pencils of conics on $\sfD$ are the  genus one pencils $|2F_1|$ and $|2F_2|$ of $S$. The half-fibers $F_1,F_1'$ (resp. $F_2,F_2'$) are the proper inverse transforms of the lines $L_1,L_1'$ (resp. $L_2,L_2'$). One can choose projective coordinates in $\bbP^4$ so that $\sfD$ is given by equations
\beq\label{eqd1}
x_0^2+x_1x_2 = x_0^2+x_3x_4 = 0.
\eeq
The pencils of conics that give rise to the pencils $|2F_1|$ and $|2F_2|$ are cut out by the linear pencils of planes
\beq\label{conics}
ax_2 +bx_3 = ax_4+bx_1 = 0, \quad  ax_2 +bx_4  =  ax_3+bx_1 =  0.
\eeq
 The lines are given by equations $x_0 = x_i=x_j = 0, i\in \{1,2\},j\in \{3,4\}$. They correspond to the parameters $(a:b) = (1:0)$ and $(0:1)$. 

If $K_S = 0$ and $S$ is ordinary (resp. supersingular), the surface $\sfD$ has a unique singular point, which is a rational double of type $D_4^{1}$ (resp. $D_4^{0}$) in the notation of Artin \cite{Artin}. The surface is again an anti-canonical model of a unique (up to isomorphism) weak del Pezzo surface of degree 4, which is the blow-up of 5 points $p_1,\ldots,p_5$ in $\bbP^2$, where $p_5$ is infinitely near to $p_4$, $p_4$ is infinitely near to $p_3$ and $p_3$ is infinitely near to $p_2$. The points $p_1,p_2$ and $p_3$ lie on a line $l$, but $p_4$ and $p_5$ are not on $l$. The surface $\sfD$, obtained by contracting the proper inverse transform of $l$ and the exceptional curves over $p_2,p_3$ and $p_4$, contains only two lines, which are the exceptional curves over $p_1$ and $p_5$. Their 
proper inverse transforms on $S$ are the half-fibers of the genus one fibrations $|2F_1|$ and $|2F_2|$. The fibrations themselves are defined by the pencils of conics on $\sfD$ obtained from the pencil of lines through $p_1$ and the pencil of conics through the points $p_2,p_3,p_4,p_5$. The surface $\sfD$ is isomorphic to a surface given by equations
\beq\label{eqd2}
x_0^2+x_1x_2=x_1x_3+x_4(ex_0+x_2+x_4) = 0,
\eeq
where $e = 1$ if $S$ is ordinary, and $e=0$ if $S$ is supersingular.  
The pencils of concis that give rise to our pencils are given by the equations
\beq\label{conics2}
a x_3 +b (ex_0+x_2+x_4) = a x_4+ b x_1 = 0, \quad  a (ex_0+x_2+x_4)+ b x_1 = a x_3 + b x_4 =  0.
\eeq

If the map $\phi$ is separable, the birational automorphism of $S$ defined by the degree two separable extension of the fields of rational functions $\Bbbk(S)/\phi^*\Bbbk(\sfD)$ extends to a biregular automorphism of $S$ which we call a \emph{bielliptic involution} of $S$.  

The group of automorphisms of the surface $\sfD$ is a subgroup of projective transformations of $\bbP^4$ that leaves the surface $\sfD$ invariant. The following proposition describes the group of automorphisms of the quartic surface $\sfD$ and we leave the computations to the reader.\footnote{The computation of these groups in the cases of surfaces $\sfD_2,\sfD_3$ in \cite{CD} is erroneous. The correct computation can be found in \cite{CDL}.}

\begin{proposition}\label{automorphisms} Let $\sfD_1,\sfD_2,\sfD_3$ be the image of a bielliptic map defined by the linear system $|2F_1+2F_2|$, where $K_S\ne 0$, $S$ is ordinary, or $S$ is supersingular, respectively. Then 
\begin{itemize}
\item $\Aut(\sfD_1)\cong \bbG_m^2\rtimes D_8$;
\item $\Aut(\sfD_2)\cong \bbG_a^2\rtimes \bbZ/2\bbZ$;
\item $\Aut(\sfD_3)\cong (\bbG_a^2\rtimes \bbG_m)\rtimes \bbZ/2\bbZ$.
\end{itemize}
Here, $\bbG_m$ (resp. $\bbG_a$) denotes the multiplicative (resp. additive) one-dimensional algebraic group over $\Bbbk$ and $D_8$ denotes the dihedral group of order $8$.
\end{proposition}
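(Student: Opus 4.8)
The plan is to realize $\Aut(\sfD_i)$ as the stabilizer of a pencil of quadrics in $\PGL_5$ and to compute this stabilizer directly from the normal forms \eqref{eqd1} and \eqref{eqd2}. Each $\sfD_i$ is the anticanonical model of a weak del Pezzo surface of degree $4$, so $-K_{\sfD_i}$ is intrinsic and $\Aut(\sfD_i)$ acts on the $5$-dimensional space $H^0(\sfD_i,-K_{\sfD_i})$ compatibly with the embedding $\sfD_i\hookrightarrow\bbP^4$; hence $\Aut(\sfD_i)$ is exactly the subgroup of $\PGL_5$ stabilizing $\sfD_i$. Since $\sfD_i$ is non-degenerate ($h^0(\mathcal{I}_{\sfD_i}(1))=0$) and a complete intersection of two quadrics, $H^0(\bbP^4,\mathcal{I}_{\sfD_i}(2))$ is the two-dimensional pencil $\calP_i$ spanned by the two displayed equations and $\sfD_i$ is its scheme-theoretic base locus; therefore $\Aut(\sfD_i)=\{g\in\PGL_5:g\cdot\calP_i=\calP_i\}$.

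Next I would split off a finite quotient using the intrinsic geometry of $\sfD_i$, obtaining a short exact sequence $1\to N_i\to\Aut(\sfD_i)\to Q_i\to 1$. For $i=1$ the four lines $\{x_0=x_j=x_k=0\}$ with $j\in\{1,2\}$, $k\in\{3,4\}$ form a quadrilateral whose four vertices are the singular ($A_1$) points of $\sfD_1$, with $L_1,L_1'$ and $L_2,L_2'$ the two pairs of opposite edges corresponding to the two pencils of conics in \eqref{conics}. Every automorphism permutes the four lines as a symmetry of this quadrilateral, so $Q_1\subseteq D_8$. For $i=2,3$ the surface has a unique singular point (a rational double point of type $D_4^{(1)}$, resp.\ $D_4^{(0)}$), which is fixed by every automorphism, together with exactly two lines through it, so every automorphism either fixes or interchanges them and $Q_i\subseteq\bbZ/2\bbZ$. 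Surjectivity onto the claimed $Q_i$ and a splitting of the sequence are obtained from explicit ``monomial'' automorphisms: for \eqref{eqd1} the three involutions $x_1\leftrightarrow x_2$, $x_3\leftrightarrow x_4$ and $(x_1,x_2)\leftrightarrow(x_3,x_4)$, which generate a copy of $D_8$; for \eqref{eqd2} a single involution interchanging the two lines. By construction $N_i$ is the subgroup of $\Aut(\sfD_i)$ fixing each line of $\sfD_i$, and hence also each singular point.

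It then remains to compute $N_i$. Lifting a general element of $N_i$ to $\GL_5$, using that it fixes the marked lines (and hence the marked points) and preserves the pencil $\calP_i$, one is left with an explicit polynomial system in the matrix entries. Solving it gives $N_1\cong\bbG_m^2$, realized by the diagonal torus $\mathrm{diag}(1,t,t^{-1},u,u^{-1})$ that preserves \eqref{eqd1}. For \eqref{eqd2}, fixing the normal form forces an essentially upper-triangular shape concentrated at the $D_4$-point, and one obtains $N_2\cong\bbG_a^2$ when $e=1$ and $N_3\cong\bbG_a^2\rtimes\bbG_m$ when $e=0$; the extra factor $\bbG_m$ is precisely the diagonal scaling that is obstructed by the monomial $ex_0x_4$ in \eqref{eqd2} when $e=1$ and available when $e=0$. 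Combining this with the previous step, and reading off from the explicit matrices how the finite symmetries conjugate $N_i$, gives the three semidirect-product descriptions.

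I expect the main obstacle to be carrying out this last computation faithfully in characteristic $2$. Because $x_0^2$ has identically vanishing polar form, the degenerate members of $\calP_i$, the distinction between the singularity types $D_4^{(1)}$ and $D_4^{(0)}$, and the count of lines on $\sfD_i$ all have to be treated with the characteristic-$2$ notion of a quadratic form, and the shape of $N_i$ near the $D_4$-point is genuinely sensitive to the parameter $e$. This is exactly the point at which the computation in \cite{CD} was incorrect; the corrected version, which we follow, is \cite{CDL}, Proposition 0.6.26.
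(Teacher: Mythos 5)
The paper does not actually prove this proposition: it is imported verbatim from \cite{CDL}, Proposition 0.6.26, with only a footnote recording that the corresponding computation in \cite{CD} was wrong for $\sfD_2,\sfD_3$. So there is no in-paper argument to compare against; the relevant benchmark is the explicit data the paper does record in Remark \ref{explicit}. Your strategy is the standard (and, as far as one can tell, the intended) one, and every step I can check is sound: anticanonical embedding identifies $\Aut(\sfD_i)$ with the stabilizer in $\PGL_5$ of the pencil of quadrics; the four lines of $\sfD_1$ do form a quadrilateral with the four $A_1$-points as vertices, the three monomial involutions do generate a split $D_8$, and the diagonal torus $\operatorname{diag}(1,t,t^{-1},u,u^{-1})$ preserves \eqref{eqd1}; for $\sfD_2,\sfD_3$ the unique $D_4$-point and the two lines through it give the $\bbZ/2\bbZ$ quotient, and your observation that the scaling $\bbG_m$ is killed precisely by the monomial $ex_0x_4$ matches the formulas in Remark \ref{explicit}. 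The one substantive caveat is that the decisive step --- showing the line-preserving subgroup $N_i$ is no larger than $\bbG_m^2$, $\bbG_a^2$, resp.\ $\bbG_a^2\rtimes\bbG_m$ --- is described but not carried out; this is exactly the computation that went wrong in \cite{CD}, so as written your text is a correct and well-targeted plan rather than a complete proof. Executing that linear-algebra step (or citing \cite{CDL} for it, as the paper does) is what remains.
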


\begin{remark}\label{explicit}
Note that the connected component $\Aut(\sfD)^0$ of $\Aut(\sfD)$ is the group of automorphisms preserving each line on $\sfD$.
Using equations (\ref{eqd1}) and (\ref{eqd2}), we can write the action of $\Aut(\sfD)^0$ explicitly as follows, with $\lambda,\mu \in \bbG_m$ and $\alpha,\beta \in \bbG_a$:

\begin{itemize}
\item Action of $\Aut(\sfD_1)^0:$ 
\begin{equation*}
(x_0:x_1:x_2:x_3:x_4) \mapsto (x_0:\lambda x_1: \lambda^{-1}x_2 : \mu x_3 : \mu^{-1} x_4)
\end{equation*}
\item Action of $\Aut(\sfD_2)^0:$
{\small \begin{equation*} (x_0:x_1:x_2:x_3:x_4) \mapsto (x_0 + \alpha x_1 : x_1 : \alpha^2 x_1 + x_2 : \beta x_0 + (\alpha \beta+\alpha^2 \beta + \beta^2)x_1 + \beta x_2 + x_3 + (\alpha + \alpha^2)x_4 : \beta x_1 + x_4)
\end{equation*}}
\item Action of $\Aut(\sfD_3)^0:$
 \begin{align*}
(x_0:x_1:x_2:x_3:x_4) &\mapsto (x_0+\alpha x_1:x_1:\alpha^2 x_1+x_2:(\alpha^2\beta+\beta^2) x_1+\beta x_2+x_3+\alpha^2x_4,\beta x_1+x_4) \\
(x_0:x_1:x_2:x_3:x_4) &\mapsto (x_0:\lambda^{-1}x_1:\lambda x_2:\lambda^3x_3,\lambda x_4)
\end{align*}
\end{itemize}

Moreover, we can compute the group of automorphism fixing the pencils given by equations (\ref{conics}) (resp. (\ref{conics2}))  on $\sfD$. They are obtained by setting $\lambda = \mu \in \{1,-1\}$ (resp. $\alpha \in \{0,1\},\beta = 0$, resp. $\alpha = \beta = 0$, $\lambda = 1$).

\end{remark}

The known information about the automorphism group of the surfaces $\sfD$ allows us to give a criterion for an automorphism to be a bielliptic involution.

\begin{corollary} \label{criterion}
Let $(F_1,F_2)$ be a non-degenerate $U_{[2]}$-sequence and let $g$ be a non-trivial automorphism of $S$. Assume that $g$ preserves $F_1$, $F_2$ and a $(-2)$-curve $E$ with $E.F_1 = E.F_2 = 0$, which is not a component of one of the half-fibers $F_1,F_2,F_1',F_2'$. If $S$ is supersingular, assume additionally that $g$ has order $2^n$. Then, $g$ is the bielliptic involution associated to the linear system $|2F_1 + 2F_2|$.
\end{corollary}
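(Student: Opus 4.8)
The plan is to use the bielliptic map $\phi: S \to \sfD$ associated to the non-degenerate $U_{[2]}$-pair $(F_1,F_2)$ and show that the hypotheses force $g$ to descend to the trivial automorphism of $\sfD$, whence $g$ is the deck transformation of $\phi$, i.e. the bielliptic involution (here one needs $\phi$ to be separable, which I would recall follows from the existence of $(F_1,F_2)$ in the relevant cases, or argue directly that inseparability would contradict the existence of $g$). First I would observe that since $g$ preserves $F_1$ and $F_2$, it preserves the linear system $|2F_1+2F_2|$ and hence induces an automorphism $\bar g$ of $\sfD \subset \bbP^4$; the goal is to prove $\bar g = \id$. Since $g$ fixes $F_1, F_2$ (not just their pencils $|2F_1|, |2F_2|$), the automorphism $\bar g$ fixes the two conic pencils on $\sfD$ coming from $|2F_1|, |2F_2|$ \emph{and} fixes the special members of these pencils corresponding to the half-fibers $F_1, F_2$ (the lines $L_1, L_2$ on $\sfD$, via the correspondence recalled in Section~\ref{S:4}). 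In particular $\bar g$ lies in $\Aut(\sfD)^0$, since by Remark~\ref{explicit} this is exactly the subgroup preserving each line on $\sfD$, and moreover $\bar g$ lies in the subgroup fixing both conic pencils, which by the last sentence of Remark~\ref{explicit} is obtained by imposing $\lambda = \mu$ (case $\sfD_1$), $\alpha \in \{0,1\}, \beta = 0$ (case $\sfD_2$), or $\alpha = \beta = 0, \lambda = 1$ (case $\sfD_3$).

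In the supersingular case $\sfD = \sfD_3$ this already gives $\bar g = \id$, so $g$ is the bielliptic involution, and the extra hypothesis that $\ord(g) = 2^n$ is used only to rule out the possibility that $g$ permutes the two half-fibers nontrivially in a way not covered above — actually in our setup $g$ fixes $F_1, F_2$ individually, so I expect the $2$-power hypothesis is needed to exclude the outer $\bbZ/2\bbZ$ or to handle the case distinction cleanly; I would phrase it so that whatever residual finite ambiguity remains after landing in $\Aut(\sfD)^0$ is killed by it. In the cases $\sfD_1$ (with $\bar g$ acting by $x_0 \mapsto x_0$, $x_1 \mapsto \lambda x_1$, $x_2 \mapsto \lambda^{-1} x_2$, $x_3 \mapsto \lambda x_3$, $x_4 \mapsto \lambda^{-1} x_4$) and $\sfD_2$ (with $\bar g$ either the identity or the involution with $\alpha = 1$), I still need to use the last piece of the hypothesis: that $g$ preserves a $(-2)$-curve $E$ with $E \cdot F_1 = E \cdot F_2 = 0$ which is \emph{not} a component of any of the four half-fibers $F_1, F_1', F_2, F_2'$.

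The role of $E$: since $E \cdot (2F_1 + 2F_2) = 0$, the curve $E$ is contracted by $\phi$ to a point $q \in \sfD$, and since $E$ is not a component of $F_1, F_1', F_2, F_2'$, this point $q$ does not lie on any of the lines $L_1, L_1', L_2, L_2'$ — in particular $q$ is a point of $\sfD$ with all four "boundary" coordinates nonzero in a suitable sense, i.e. $q$ is a \emph{general} point with respect to the torus/affine action. Then $\bar g$ fixes $q$: in case $\sfD_1$, a nontrivial element of the one-parameter group $\lambda$ fixes only points lying on one of the coordinate hyperplanes $x_1 = 0$ or $x_2 = 0$ (equivalently on the lines), so fixing $q$ forces $\lambda = 1$ and $\bar g = \id$; in case $\sfD_2$, the nontrivial involution (with $\alpha = 1$) has fixed locus contained in the union of the lines, so again fixing $q$ forces $\bar g = \id$. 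Hence $\bar g = \id$ in all cases, $g$ is the deck transformation of the separable double cover $\phi$, and therefore $g$ is the bielliptic involution associated to $|2F_1+2F_2|$; note this also shows $\ord(g) = 2$ a posteriori.

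The main obstacle I anticipate is the bookkeeping around the half-fiber/line correspondence and the precise identification of which subgroup of $\Aut(\sfD)$ is cut out by "fixes both conic pencils and all four lines" — one must be careful that fixing $F_1$ (a specific half-fiber) rather than merely the pencil $|2F_1|$ really does pin down a specific line among $\{L_1, L_1'\}$, and that the curve $E$ maps to a point genuinely off all the lines so that the fixed-point computation on $\sfD$ applies. Handling the possible inseparability of $\phi$ and the exact bound on $\ord(g)$ in the supersingular case is the other delicate point; I would isolate it by noting that if $\phi$ were inseparable then $S$ would have no automorphism acting nontrivially with the stated fixed data, contradicting $g \neq \id$, or by citing the separability of bielliptic maps arising from non-degenerate $U_{[2]}$-pairs as recalled in Section~\ref{S:4}.
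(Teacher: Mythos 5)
Your overall strategy --- descend $g$ to an automorphism $\bar g$ of $\sfD$, note that preservation of the half-fibers puts $\bar g$ in $\Aut(\sfD)^0$, and use the point $P=\phi(E)$ lying off the lines to pin down $\bar g$ --- is exactly the paper's. But there is a genuine error in the step where you place $\bar g$ in the subgroup ``fixing both conic pencils'' (the one cut out by $\lambda=\mu$ for $\sfD_1$, by $\alpha\in\{0,1\},\beta=0$ for $\sfD_2$, and by $\alpha=\beta=0,\lambda=1$ for $\sfD_3$). The hypothesis that $g$ preserves $F_1$ and $F_2$ only tells you that $\bar g$ preserves each pencil of conics \emph{as a pencil} together with the distinguished lines; it does not tell you that $\bar g$ fixes every member of the pencils, which is what the last sentence of Remark~\ref{explicit} is about. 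Indeed, Lemma~\ref{trivial2} exhibits cohomologically trivial automorphisms of odd order on supersingular surfaces which preserve every half-fiber and every $(-2)$-curve, yet act on $\sfD_3$ through a nontrivial element of $\bbG_m$, moving the conics within each pencil. So all you may conclude a priori is $\bar g\in\Aut(\sfD)^0$.

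For $\sfD_1$ and $\sfD_2$ the slip is harmless, because your fixed-point computation at $P$ (all coordinates nonzero for $\sfD_1$, resp.\ $x_1(P)\neq 0$ for $\sfD_2$) in fact kills every nontrivial element of the full group $\Aut(\sfD)^0$, not just of the smaller subgroup you invoke. But for $\sfD_3$ your argument collapses: you declare $\bar g=\id$ before using either $E$ or the order hypothesis, and are then understandably puzzled about what the $2^n$ assumption is for. The correct argument there is: $\bar g\in\bbG_a^2\rtimes\bbG_m$; the condition $\bar g(P)=P$ with $x_1(P)\neq 0$ eliminates the $\bbG_a^2$-part, but a nontrivial element of $\bbG_m$ can still fix such a point (for instance $(0:1:0:0:0)$ lies on $\sfD_3$, off the lines, and is fixed by all of $\bbG_m$), and it is precisely the hypothesis $\ord(g)=2^n$ that rules this out, since $\bbG_m$ has no nontrivial $2$-power torsion in characteristic $2$. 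This is not a removable technicality: the remark following the corollary states that the criterion genuinely fails for odd-order automorphisms of supersingular surfaces, so no argument that ignores the order hypothesis can be correct. Your care about separability of $\phi$ is sound (a nontrivial $g$ inducing the identity on $\sfD$ forces the degree-two extension to be separable), but the $\sfD_3$ case must be redone as above.
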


\begin{proof}
Let $\phi: S \to \sfD$ be a bielliptic map defined by the linear system $|2F_1 + 2F_2|$. Since $g$ leaves  $|2F_1 + 2F_2|$ invariant, it descends to an automorphism of $\bbP^4 = |2F_1 + 2F_2|^*$ that leaves $\sfD$ invariant. Moreover, the induced automorphism preserves the lines on $\sfD$ by assumption. Recall that $E.F_1 = E.F_2 = 0$, hence $\phi(E)$ is a point $P$. Since $E$ is not a component of one of the half-fibers, $P$ does not lie on any of the lines of $\sfD$. If $\sfD = \sfD_1$, this means that $P$ is not on the hypersurface $x_0 = 0$ and if $\sfD \in \{\sfD_2,\sfD_3\}$, it means that $P$ is not on the hypersurface $x_1 = 0$. 

If $\sfD = \sfD_1$, the $x_0$ coordinate $x_0(P)$ of $P$ is non-zero, hence so are all $x_i(P)$ by Equation (\ref{eqd1}). By Remark \ref{explicit}, there is no automorphism of $\sfD_1$ fixing $P$ and preserving the lines except the identity. Therefore, $g$ coincides with the covering involution of $\phi$.

If $\sfD  \in \{\sfD_2,\sfD_3\}$, we have $x_1(P) \neq 0$. Again, by Remark \ref{explicit}, there is no automorphism of $\sfD_2$ fixing $P$ and preserving the lines except the identity. For $\sfD_3$, we use the additional assumption to exclude the case that $g$ acts on $\sfD_3$ via $\bbG_m$.
\end{proof}

\begin{remark}
In fact, the failure of this criterion without the additional assumption in the supersingular case leads to the existence of cohomologically trivial automorphisms of odd order (see Section $7$).
\end{remark}

\begin{lemma}\label{numtrivialbiell} Let  $\tau$ be the bielliptic involution associated to a linear system $|2F_1+2F_2|$. Suppose $\tau$ is numerically trivial. Then, $\Num(S)_\bbQ$ is spanned by the numerical classes $[F_1],[F_2]$ and eight  
smooth rational curves that are contained in fibers of both $|2F_1|$ and $|2F_2|$. 
\end{lemma}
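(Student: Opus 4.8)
The plan is to feed the numerical triviality of $\tau$ into the push--pull calculus for the degree--$2$ morphism $\phi\colon S\to\sfD$ defined by $|2F_1+2F_2|=|2(F_1+F_2)|$. Since $F_1\cdot F_2=1$ and $F_1^2=F_2^2=0$, the sublattice $U:=\bbZ[F_1]+\bbZ[F_2]\subseteq\Num(S)$ is a unimodular hyperbolic plane, hence a direct summand: $\Num(S)=U\perp E$ with $E:=U^{\perp}$ negative definite of rank $8$ (the $E_8$-lattice), in particular with non-degenerate intersection form. Now observe that a $(-2)$-curve $R$ lies in fibers of both $|2F_1|$ and $|2F_2|$ precisely when $R\cdot F_1=R\cdot F_2=0$, equivalently $R\cdot(F_1+F_2)=0$; and since $F_1,F_2$ are nef and $U^{\perp}$ is negative definite, these are exactly the irreducible curves contracted by $\phi$. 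So the lemma reduces to the assertion that the classes of the $\phi$-contracted $(-2)$-curves span $E_{\bbQ}$: choosing $8$ of them that form a $\bbQ$-basis of $E_{\bbQ}$ and adjoining $[F_1],[F_2]$ then gives a $\bbQ$-basis of $\Num(S)_{\bbQ}=U_{\bbQ}\oplus E_{\bbQ}$.

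Next I would factor $\phi=\psi\circ c$, where $c\colon S\to S_0$ contracts the $\phi$-exceptional $(-2)$-curves (so $S_0$ is normal with rational double points) and $\psi\colon S_0\to\sfD$ is finite of degree $2$; since $\phi$ is separable (as $\tau$ exists), $\psi$ carries a covering involution $\tau_0$ descending $\tau$, and $c\circ\tau=\tau_0\circ c$. Two facts about $\sfD$ from Section~\ref{S:4} enter here. First, in each of the three cases the rational double points of $\sfD$ form a root lattice of rank $4$ ($4A_1$ for $\sfD_1$, $D_4$ for $\sfD_2,\sfD_3$), so $\Num(\sfD)_{\bbQ}\cong\Pic(\sfD)_{\bbQ}$ has dimension $6-4=2$ and is spanned by the classes $c_1,c_2$ of the two pencils of conics. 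Second, by construction $\phi^{*}c_i$ is the class of a general member of $|2F_i|$, i.e.\ $\phi^{*}c_i=2[F_i]$ (the general conic of either pencil has reduced, connected, $2:1$ preimage a general fiber of the corresponding genus one fibration). Hence $\phi^{*}\Num(\sfD)_{\bbQ}=U_{\bbQ}$.

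Now comes the main step. Let $\alpha\in E_{\bbQ}$ be orthogonal to every $\phi$-contracted $(-2)$-curve; I claim $\alpha=0$. Being orthogonal to all $c$-exceptional curves, which form a negative-definite configuration, $\alpha=c^{*}\beta$ for a (unique) $\beta\in\Num(S_0)_{\bbQ}$ (using that rational double point surfaces are $\bbQ$-factorial). Then, with $\phi_{*}\alpha:=\psi_{*}c_{*}\alpha=\psi_{*}\beta\in\Num(\sfD)_{\bbQ}$, the projection formula, the identity $\psi^{*}\psi_{*}=1+\tau_0^{*}$ on $\Num(\cdot)_{\bbQ}$ for the finite double cover $\psi$, and $c^{*}\tau_0^{*}=\tau^{*}c^{*}$ give
\[
\phi^{*}\phi_{*}\alpha\;=\;c^{*}\psi^{*}\psi_{*}\beta\;=\;c^{*}(\beta+\tau_0^{*}\beta)\;=\;\alpha+\tau^{*}\alpha\;=\;2\alpha,
\]
the last equality because $\tau$ is numerically trivial. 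On the other hand $\phi_{*}\alpha\in\Num(\sfD)_{\bbQ}$, so $\phi^{*}\phi_{*}\alpha\in\phi^{*}\Num(\sfD)_{\bbQ}=U_{\bbQ}$. Therefore $2\alpha\in U_{\bbQ}\cap E_{\bbQ}=0$, whence $\alpha=0$. Thus the contracted $(-2)$-curves span a subspace of $E_{\bbQ}$ with trivial orthogonal complement in $E_{\bbQ}$, and since the form on $E_{\bbQ}$ is non-degenerate they span all of $E_{\bbQ}$, completing the proof.

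The conceptual heart is the displayed identity: numerical triviality of $\tau$ turns $\phi^{*}\phi_{*}$ into multiplication by $2$ on classes orthogonal to the contracted curves, forcing all such classes to come from $\sfD$, whose Picard group is only two-dimensional. The points that need care rather than cleverness are (i) the input $\rho(\sfD)=2$ together with $\phi^{*}c_i=2[F_i]$, which rest on the explicit geometry of the quartic del Pezzo surfaces $\sfD_1,\sfD_2,\sfD_3$ recalled in Section~\ref{S:4}, and (ii) the bookkeeping of push--pull through the two singular intermediate surfaces $S_0$ and $\sfD$, for which one works throughout with $\bbQ$-coefficients and, if desired, verifies $\psi^{*}\psi_{*}=1+\tau_0^{*}$ after passing to resolutions where $\psi$ becomes a finite flat double cover of smooth surfaces.
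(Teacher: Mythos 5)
Your proof is correct and is essentially the paper's argument in dual form: the paper restricts to the open complement $S' = S - E$ of the contracted curves and uses that the $\tau$-invariant part of $\Pic(S')_{\bbQ}$ is the pullback of $\Pic(\sfD)_{\bbQ} = \langle c_1, c_2\rangle_{\bbQ}$, which is exactly your identity $\psi^{*}\psi_{*} = 1 + \tau_0^{*}$ combined with $\phi^{*}\Num(\sfD)_{\bbQ} = U_{\bbQ}$. Numerical triviality then forces every class (in your version, every class orthogonal to $U$ and to the contracted curves) to come from $\sfD$, and the rank count $10 - 2 = 8$ finishes the proof in both versions.
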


\begin{proof} We have a finite degree 2 cover $S' = S-E\to D' = D-P$, where $E$ is spanned by (-2)-curves blown down 
to a finite set of points $P$ on $\sfD$. We have $\Pic(\sfD')_{\bbQ} = \Pic(\sfD)_{\bbQ}$ and $\Pic(S')_{\bbQ}^g$ (the invariant part) = $f^*(\Pic(\sfD')_{\bbQ}$ is spanned by the restriction of $F_1, F_2$ to $S'$.  Since $\Pic(S)$ is spanned by $\Pic(S')$ and the classes of components of $E$, we  can write any invariant divisor  class as a linear combination of $[F_1],[F_2]$ and invariant components of $E$. In our case all divisors classes are invariant. Since $\dim( \Pic(S)_\bbQ )- \dim (\langle F_1,F_2 \rangle_\bbQ) = 8$, $E$ consists of eight $(-2)$-curves.
\end{proof}

Denote the number of irreducible components of a fiber $D$ of $|2F|$ by $m_{D}$. Since $\rank(\Pic(S)) = 10$, we have $\sum_{D \in |2F|} (m_{D}-1) \leq 8$, and, by the Shioda-Tate formula, the Jacobian of $|2F|$ has finite Mordell-Weil group if and only if equality holds. In the latter case, $|2F|$ is called \emph{extremal}.

\begin{corollary}\label{extremal}
Let $(F_1,F_2)$ be a $U_{[2]}$-pair of half-fibers such that the bielliptic involution $\tau$ associated to $|2F_1+2F_2|$ is numerically trivial. Then, $|2F_1|$ and $|2F_2|$ are extremal.

Moreover, the following hold:
\begin{enumerate}
\item  For every fiber $D$ of $|2F_1|$, all but one component $C$ of $D$ is contained in fibers  of $|2F_2|$.
\item  $C$ has multiplicity at most $2$.
\item  Neither $|2F_1|$ nor $|2F_2|$ have a multiplicative fiber with more than two components.
\end{enumerate} 
\end{corollary}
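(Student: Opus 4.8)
The plan is to deduce everything from Lemma~\ref{numtrivialbiell}. That lemma tells us $\Num(S)_\bbQ$ is spanned by $[F_1]$, $[F_2]$ and eight $(-2)$-curves, each of which lies in a fiber of $|2F_1|$ \emph{and} in a fiber of $|2F_2|$. First I would count: since $[F_1]\cdot[F_2]=1$ and $[F_i]^2=0$, the classes $[F_1],[F_2]$ span a rank-$2$ sublattice, so the eight $(-2)$-curves must be linearly independent modulo $\langle F_1,F_2\rangle_\bbQ$. Restricting attention to $|2F_1|$: each of these eight curves is a component of a fiber of $|2F_1|$, they are vertical, hence their classes lie in the span of the fiber components. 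In $\Pic(S)_\bbQ$ the span of all components of all fibers of $|2F_1|$, together with $[F_1]$, has dimension $1+\sum_{D\in|2F_1|}(m_D-1)$ by the standard fibration lattice computation (one relation per fiber, all fibers numerically proportional to $F_1$). For eight independent $(-2)$-curves to fit in there we need $\sum_{D}(m_D-1)\ge 8$, and combined with the inequality $\sum_D(m_D-1)\le 8$ recalled just before the corollary, we get equality; by Shioda--Tate this is exactly the statement that $|2F_1|$ is extremal, and symmetrically for $|2F_2|$. Moreover equality forces the eight $(-2)$-curves, together with $[F_1]$, to span the \emph{entire} vertical lattice of $|2F_1|$: every fiber component of $|2F_1|$ is a rational combination of $[F_1]$ and the eight curves.

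Next I would prove (1). Fix a fiber $D$ of $|2F_1|$ with components $C_0,\dots,C_{m_D-1}$. The eight chosen curves that lie in $D$ number exactly $m_D-1$ — indeed, the vertical lattice of $|2F_1|$ restricted to $D$ is spanned by $[F_1]$ and the $C_j$'s with a single relation $\sum a_j C_j \sim F_1$ (up to torsion), so to span it we need at least $m_D-1$ of our eight curves in $D$, and since the total is eight and $\sum(m_D-1)=8$ it is exactly $m_D-1$ in each fiber. Hence in each fiber $D$ there is exactly one component $C$ not among the chosen eight, and the chosen ones are all contained in fibers of $|2F_2|$. That one leftover component $C$ cannot also be vertical for $|2F_2|$: if it were, then combined with the chosen eight it would give $\sum(m_D-1)+1=9$ independent vertical classes for... no — more carefully, $C$ meets $F_2$ with $C\cdot F_2>0$ because otherwise $C$ is $|2F_2|$-vertical, and then all of $D$'s components are $|2F_2|$-vertical, making $[F_1]$ itself a $\bbQ$-combination of $|2F_2|$-vertical classes, contradicting $[F_1]\cdot[F_2]=1$ together with the fact that $|2F_2|$-vertical classes are orthogonal to $[F_2]$. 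So $C$ is the unique component of $D$ that is horizontal for $|2F_2|$, proving (1).

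For (2), I would argue that $C\cdot F_2$ — equivalently the multiplicity $\mu$ of $C$ in $D$ paired suitably — is at most $2$: since the chosen eight curves are $|2F_2|$-vertical, $F_1=\tfrac12 D$ gives $1=F_1\cdot F_2=\tfrac12\sum (\text{mult of }C_j\text{ in }D)\,(C_j\cdot F_2)=\tfrac12\,\mu_C\,(C\cdot F_2)$ where $\mu_C$ is the multiplicity of $C$ in $D$; since $C\cdot F_2\ge 1$ we get $\mu_C\le 2$, which is (2) after recalling that the multiplicity of a component equals its coefficient in the fiber. Finally (3): a multiplicative fiber $\tilde A_{n-1}$ has $n$ components each of multiplicity $1$ arranged in a cycle, and all $n$ of them meet the section/half-fiber region so that in such a fiber the leftover component $C$ has $C\cdot F_2\ge 1$ while every component has multiplicity $1$, forcing by the relation above that exactly the configuration works only when $n\le 2$; more robustly, for a multiplicative fiber the dual graph is a cycle, so removing one component $C$ leaves a chain $A_{n-1}$ of $(-2)$-curves all of which must be $|2F_2|$-vertical and hence lie in fibers of $|2F_2|$, but a chain of length $\ge 2$ inside a single fiber of $|2F_2|$ forces that fiber to be non-multiplicative or reducible in a way one checks case-by-case; rather than that, the cleanest route uses Corollary~\ref{extremal}(1)–(2) plus Proposition~\ref{half-fibers}: combining extremality with the constraint that all-but-one component of each multiplicative fiber is vertical for the other pencil, a standard check of the extremal rational elliptic configurations (Shioda--Tate, Mordell--Weil torsion) rules out a fiber of type $\tilde A_{n-1}$ with $n\ge 3$ being compatible with the surviving component having multiplicity $\le 2$ and the pencil being extremal.

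The main obstacle I anticipate is part (3): it is the only part that is not pure lattice bookkeeping and genuinely requires understanding how multiplicative fibers of one genus one pencil can decompose under the other pencil. The safe way to handle it is to combine the structural output of (1) and (2) with the classification of extremal (quasi-)elliptic fibrations on Enriques surfaces — equivalently, the possible fiber-type configurations summing to $\sum(m_D-1)=8$ — and to observe that a type-$\tilde A_{n-1}$ fiber with $n\ge 3$ would contribute a chain $A_{n-1}$ of mutually-intersecting $(-2)$-curves all lying inside fibers of $|2F_2|$; since two components of a single fiber of $|2F_2|$ that intersect must belong to the same fiber, this chain sits inside one reducible fiber of $|2F_2|$, and then the two pencils would share $n-1\ge 2$ common curves across a single fiber, which, run back through $F_1\cdot F_2=1$ and the multiplicity bound of (2), is impossible. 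Parts (1) and (2) should be short once the extremality and the "eight curves span the vertical lattice" statements are in place.
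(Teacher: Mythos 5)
Your derivation of extremality and of parts (1) and (2) is correct and amounts to the same counting as the paper's, just phrased through ranks: the paper counts the eight shared curves directly, noting that each fiber $D$ of $|2F_1|$ must keep at least one component with $C\cdot F_2>0$ (else $D\cdot F_2=0$), so at most $m_D-1$ of its components are shared, giving $8\le\sum(m_D-1)\le 8$; your linear-independence argument reaches the same equalities. The computation $2=D\cdot F_2=\mu_C(C\cdot F_2)$ for (2) is exactly the paper's.

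Part (3), however, has a genuine gap, and the routes you sketch cannot be repaired. Extremality does \emph{not} exclude multiplicative fibers with many components: extremal rational elliptic fibrations with an $I_9$ fiber exist, and the configuration $\tilde{A}_8+\tilde{A}_0+\tilde{A}_0+\tilde{A}_0$ even appears in Proposition \ref{possiblecases} of this paper. Likewise, two genus one pencils sharing $n-1\ge 2$ components inside a single fiber of $|2F_2|$ is not impossible --- it occurs, for instance, in the $\tilde{D}_4+\tilde{D}_4$ example of Remark \ref{D4D4} --- so "run back through $F_1\cdot F_2=1$" produces no contradiction. The statement genuinely requires the geometry of the bielliptic map, not lattice bookkeeping. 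The paper's argument: if $D$ is multiplicative of type $\tilde{A}_{n-1}$ with $n\ge 3$, then $D'=D-C$ is connected and satisfies $D'\cdot(2F_1+2F_2)=0$, so $\phi$ contracts $D'$ to a single point; $C$ meets $D'$ in two distinct points lying on distinct components of $D'$, and since the numerically trivial involution $\tau$ preserves each component of $D'$, these two points are not conjugate under $\tau$, so $\phi(C)$ is an irreducible curve with a node. But $\phi(C)=\phi(D)$ is a member of one of the pencils of conics on $\sfD$, and an irreducible conic is smooth --- a contradiction. This use of the contracted locus and of the images of the fibers as conics is the missing idea in your proposal.
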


\begin{proof}
By the previous lemma, there are eight $(-2)$-curves contained in fibers of both $|2F_1|$ and $|2F_2|$. Since a fiber of $|2F_1|$ cannot contain a full fiber of $|2F_2|$, this implies $8 \leq \sum_{D \in |2F_i|} (m_{D}-1) \leq 8$ for $i \in \{1,2\}$. Hence, $|2F_1|$ is extremal and so is $|2F_2|$. Moreover, if, for some fiber $D$ of $|2F_1|$, two components of $D$ are not contained in fibers of $|2F_2|$, then, by the same formula, $|2F_1|$ and $|2F_2|$ share less than eight $(-2)$-curves. This contradicts Lemma \ref{numtrivialbiell}.

For $(2)$, note that the remaining component $C$ of multiplicity $m$ in $D$ satisfies $2 = D.F_2 = mC.F_2$.  Since $C.F_2 > 0$, this yields $(2)$.

As for $(3)$, assume that $D$ is multiplicative with more than $2$ components. Note that $C$ meets distinct points on distinct components of $D$. The connected divisor $D' = D-C$ satisfies $D'.(2F_1 + 2F_2) = 0$, hence it is contained in the exceptional locus of the bielliptic map $\phi$. Since $\tau$ preserves the components of $D'$, $\phi(C)$ is an irreducible curve with a node. But $C$ is contained in the pencil of conics induced by $|2F_1|$. This is a contradiction.
\end{proof}

\section{Extra-special Enriques surfaces}
When trying to apply our knowledge of bielliptic maps and the geometry of the surfaces $\sfD$ to the study of automorphisms of an Enriques surface $S$, we first have to make sure that $S$ admits such a bielliptic map. This is guaranteed by $\nd(S) > 1$. On the other hand, even if $S$ admits a bielliptic map, it might be the case that the $(-2)$-curves on $S$ are in a very special position relative to this map and we will need to be able to choose a different bielliptic map with better properties, e.g. to be able to apply Corollary \ref{criterion}. It turns out that, with our method, surfaces with $\nd(S) \geq 3$ can be treated in a very uniform way whereas \emph{extra-special} Enriques surfaces, i.e. those with $\nd(S) \leq 2$, which only exist in characteristic $2$ by Theorem \ref{diagrams}, show special behaviour (see Table \ref{exceptnun1}). Fortunately, these extra-special surfaces can be classified and we recall the classification in this section.

%Throughout this section, we assume that $p = 2$ and $S$ is either classical or supersingular.
%An Enriques surface $S$ is called \emph{extra-special} if $\nd(S) \leq 2$.  

It is claimed in \cite{CD}, Theorem 3.5.1 that Theorem \ref{diagrams} is true in any characteristic unless the surface is extra-special with finitely many $(-2)$-curves with the dual graph defined by one of the diagrams from the following  Table \ref{extraspecialtable}. The surfaces of type $\tilde{E}_8$, $\tilde{E}_7^1$ and $\tilde{D}_8$ are  called $E_8$, $E_7$ and $D_8$-extra-special, respectively.  However, the surface of type $\tilde{E}_7^2$ was erroneously asserted to have $\nd(S) = 2$, although, in fact, it is not extra-special and has $\nd(S) = 3$, as can be checked by computing the intersection numbers of the fibers of the three genus $1$ fibrations of this surface (see \cite{DL}).\footnote{So far, this is the only known example of an Enriques surface with $\nd(S) = 3$.} Also, the proof of the claim is too long (occupies more than 30 pages of case-by-case arguments) and it is difficult to verify that the authors have not omitted some possible cases. We refer the reader to \cite{DL} for a different proof due to the second author of the classification of extra-special surfaces and collect the results we need in the context of numerically trivial automorphisms in this section.

\begin{table}[h!]
\begin{center}
\begin{tabular}{|>{\centering\arraybackslash}m{1.5cm}|>{\centering\arraybackslash}m{7cm}|>{\centering\arraybackslash}m{2.5cm}|}
\hline
Type& Configuration\\ \hline
$\tilde{E}_8$& 
\resizebox{!}{1cm}{
\xy
(-10,25)*{};
@={(-10,10),(0,10),(10,10),(20,10),(30,10),(40,10),(50,10),(60,10),(70,10),(10,20)}@@{*{\bullet}};
(-10,10)*{};(70,10)*{}**\dir{-};
(10,10)*{};(10,20)*{}**\dir{-};
\endxy
}\\ \hline
$\tilde{E}_7^1$&
\resizebox{!}{1cm}{
\xy
(0,25)*{};
@={(80,10),(90,10),(0,10),(10,10),(20,10),(30,10),(40,10),(50,10),(60,10),(70,10),(30,20)}@@{*{\bullet}};
(0,10)*{};(80,10)*{}**\dir{-};
(90,10)*{};(80,10)*{}**\dir{=};
(30,10)*{};(30,20)*{}**\dir{-};
\endxy
} 
\\ \hline
$\tilde{E}_7^2$&
\resizebox{!}{1cm}{
\xy
(0,25)*{};
@={(80,10),(90,10),(0,10),(10,10),(20,10),(30,10),(40,10),(50,10),(60,10),(70,10),(30,20)}@@{*{\bullet}};
(0,10)*{};(80,10)*{}**\dir{-};
(90,10)*{};(80,10)*{}**\dir{=};
(30,10)*{};(30,20)*{}**\dir{-};
(70,10)*{};(90,10)*{}**\crv{(80,20)};
\endxy
} 
\\ \hline
$\tilde{D}_8$&
\resizebox{!}{1cm}{
\xy
(-10,25)*{};
@={(-10,10),(0,10),(10,10),(20,10),(30,10),(40,10),(50,10),(10,20),(60,10),(50,20)}@@{*{\bullet}};
(-10,10)*{};(60,10)*{}**\dir{-};
(10,10)*{};(10,20)*{}**\dir{-};
(50,10)*{};(50,20)*{}**\dir{-};
\endxy
} 
\\ \hline
\end{tabular}
\end{center}
\caption{$E_8$,$E_7$ and $D_8$-extra-special surfaces and the $\tilde{E}_7^2$ surface}
\label{extraspecialtable}
\end{table}

%
%Let us recall a characterization of the surfaces in Table \ref{extraspecialtable} using genus one fibrations. The proof of the following proposition can be found in \cite{DL}, Proposition 5.2.6.
%
%\begin{proposition}\label{extraspecial}
%A classical or supersingular Enriques surface $S$ is of type
%\begin{itemize}
%\item $\tilde{E}_8$, if and only if it admits a quasi-elliptic fibration with a half-fiber of type $\tilde{E}_8$.
%\item $\tilde{E}_7^1$ or $\tilde{E}_7^2$, if and only if it admits a quasi-elliptic fibration with a simple fiber of type $\tilde{E}_8$ or with a half-fiber of type $\tilde{E}_7$.
%\item $\tilde{D}_8$-extra-special, if and only if admits a quasi-elliptic fibration with a half-fiber of type $\tilde{D}_8$ or an elliptic fibration with a fiber of type $\tilde{E}_8$.
%\end{itemize}
%\end{proposition}
%
%
%Using this list, we can give some results which are similar to Theorem \ref{diagrams}, the first of which 
% is well known (see \cite{CD} Theorem 3.4.1).

%Unsurprisingly, these surfaces appear among the exceptions in our main theorems (see for example Corollary \ref{mainclassical}, Corollary \ref{finiteaut} and Theorem \ref{numeven}).

\begin{theorem}\label{diagrams2}
Assume that $S$ is not $E_8$-extra-special. Then, any half-fiber can be extended to a non-degenerate $U_{[2]}$-sequence. In particular, $\nd(S) \geq 2$.
\end{theorem}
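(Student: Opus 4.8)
The statement is equivalent to the following: for every half-fiber $F_1$ there is a half-fiber $F_2$ with $F_1\cdot F_2=1$. Passing between $\NS(S)$ and $\Num(S)$, this amounts to finding a primitive isotropic nef class $v\in\Num(S)$ with $v\cdot[F_1]=1$, since any such $v$ is the numerical class of a half-fiber (Section~$3$).

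The plan is to apply Proposition~\ref{extend} to the $1$-degenerate $U_{[1]}$-sequence $(F_1)$, obtaining a $c'$-degenerate $U_{[10]}$-sequence containing $F_1$, with $c'\ge 1$. If $c'\ge 2$, any second half-fiber $F_i$ of the sequence satisfies $F_1\cdot F_i=1$, so $(F_1,F_i)$ is the desired non-degenerate $U_{[2]}$-sequence. If $c'=1$, then by the Remark after Proposition~\ref{extend} the sequence is $(F_1,F_1+R_1,\dots,F_1+\sum_{i=1}^{9}R_i)$ with $R_1,\dots,R_9$ a chain of type $A_9$, with $R_2,\dots,R_9$ lying in fibers of $|2F_1|$ and $R_1$ a special bisection of $|2F_1|$. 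Since $R_2,\dots,R_9$ is a connected diagram of type $A_8$, it lies in one reducible fiber $D_0$ of $|2F_1|$; as the dual graph of $D_0$ then contains $A_8$, the Shioda--Tate inequality $\sum_D(m_D-1)\le 8$ forces $D_0$ to be the unique reducible fiber, $|2F_1|$ to be extremal, and $D_0$ to be of type $\tilde A_8$ or $\tilde E_8$.

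To finish when $c'=1$ I would use $R_1$ to build $F_2$ directly. Since $R_1\cdot D_0=R_1\cdot 2F_1=2$, the curve $R_1$ meets $D_0$ in at most two points. If it meets $D_0$ in two points -- two distinct components, or one component of multiplicity $1$ in $D_0$ with a tangency -- then $R_1$ together with the unique path in the dual graph of $D_0$ joining these points forms a sub-diagram of type $\tilde A_\ell$ whose fundamental cycle $G$ (all coefficients $1$) satisfies $G^2=0$ and $G\cdot F_1=R_1\cdot F_1=1$; a short computation of intersection numbers gives $G\cdot C\ge 0$ for every $(-2)$-curve $C$, so $[G]$ is nef, hence the class of a half-fiber $F_2$ with $F_1\cdot F_2=1$. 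In particular this always works when $D_0$ has type $\tilde A_8$, all of whose components have multiplicity $1$.

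The one case this does not reach is $D_0$ of type $\tilde E_8$ with $R_1$ meeting $D_0$ in a single point of a component of multiplicity $\ge 2$ (necessarily $=2$, since $R_1\cdot D_0=2$); I expect this to be the crux. Here no cyclic sub-diagram is available (the dual graph of $\{R_1\}\cup\mathrm{Comp}(D_0)$ is a tree), and indeed the $E_8$-extra-special surface shows that such an $F_2$ need not exist at all. The plan is to show that in this case the $10$ curves $\{R_1\}\cup\mathrm{Comp}(D_0)$ span a finite-index sublattice of $\Num(S)$ whose intersection theory forces the $\tilde E_8$-configuration to be a half-fiber of $|2F_1|$ met by $R_1$ in its multiplicity-$1$ component, so that $S$ realises the dual graph of $(-2)$-curves of type $\tilde E_8$ in Table~\ref{extraspecialtable}; by the classification of extra-special surfaces in \cite{DL}, $S$ is then $E_8$-extra-special, contrary to hypothesis, so this case does not occur. (Alternatively, and more briefly, the whole statement can be extracted from the classification of extra-special Enriques surfaces in \cite{DL}.)
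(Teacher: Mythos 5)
First, note that the paper contains no proof of Theorem \ref{diagrams2}: the section explicitly defers the classification of extra-special surfaces, and with it this statement, to \cite{DL}. So your closing remark ("the whole statement can be extracted from the classification in \cite{DL}") is in effect what the authors do, and your attempted direct argument is going beyond the paper. The skeleton of that argument is sensible: extend $(F_1)$ to a $U_{[10]}$-sequence by Proposition \ref{extend}; if it contains a second half-fiber you are done; otherwise locate the $A_8$-chain $R_2,\dots,R_9$ in a single fiber $D_0$, which is then the unique reducible fiber and of type $\tilde{A}_8$ or $\tilde{E}_8$; and produce $F_2$ as the fundamental cycle of a parabolic subdiagram of $\{R_1\}\cup\operatorname{Comp}(D_0)$ containing $R_1$ with coefficient one. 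All of these reductions are correct.

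The gap is that you assume $D_0$ is a simple fiber, so that $R_1\cdot D_0=2$. Nothing excludes that $R_2,\dots,R_9$ lie in a \emph{multiple} fiber, i.e.\ $D_0=2F_1$ (or $2F_1'$) with the half-fiber itself reducible of type $\tilde{E}_8$ (type $\tilde{A}_8$ is ruled out for half-fibers here by Proposition \ref{half-fibers}, since in this section half-fibers are additive). Then $R_1\cdot F_1=1$ forces $R_1$ to meet only the multiplicity-one component, which is exactly the $E_8$-extra-special graph of Table \ref{extraspecialtable}; one checks that this tree (one trivalent vertex with arms of lengths $1,2,6$) contains no parabolic subdiagram through $R_1$, so your cycle construction has nothing to work with. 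This, and not the sub-case you flag, is the crux. Indeed the sub-case you single out --- a \emph{simple} $\tilde{E}_8$ fiber met by $R_1$ in a multiplicity-two component --- is harmless: if $R_1$ meets the multiplicity-two component at the end of the short arm you get an $\tilde{E}_7$-subdiagram (arms $1,3,3$ centered at the trivalent component), and if it meets the one adjacent to the multiplicity-one component you get a $\tilde{D}_8$-subdiagram; in both cases the fundamental cycle contains $R_1$ with coefficient one, is nef and isotropic, and meets $F_1$ once, so it is the desired $F_2$. Your proposed resolution of that sub-case is moreover internally inconsistent: you cannot "force" a configuration in which $R_1$ meets a multiplicity-two component to become one in which it meets the multiplicity-one component. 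Finally, even with the bad case correctly identified, the remaining step --- showing that a surface carrying the multiple-$\tilde{E}_8$-plus-bisection configuration but \emph{not} $E_8$-extra-special still possesses a second half-fiber meeting $F_1$ once, necessarily built from $(-2)$-curves outside those ten --- is precisely the content of the classification in \cite{DL}, so at that point the direct argument collapses back into the citation.
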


\begin{theorem} \label{extraspecialclassification}
Assume that $S$ is not $E_8$,$E_7$ or $D_8$-extra-special. Then, $\nd(S) \geq 3$.
\end{theorem}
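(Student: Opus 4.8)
The plan is to prove the contrapositive: if $\nd(S)\leq 2$, then $S$ is $E_8$-, $E_7$- or $D_8$-extra-special. If $S$ is $E_8$-extra-special there is nothing to prove, so assume it is not; then Theorem~\ref{diagrams2} shows that every half-fiber lies in a non-degenerate $U_{[2]}$-sequence, so that in fact $\nd(S)=2$ and no non-degenerate $U_{[2]}$-pair extends to a non-degenerate $U_{[3]}$-sequence. The first reduction is the observation that $\nd(S)\geq 3$ is equivalent to the existence of three half-fibers $F_1,F_2,F_3$ with $F_i\cdot F_j=1$ for $i\neq j$: any such triple is automatically a $U_{[3]}$-sequence, since $F_1+F_2+F_3$ is a sum of nef classes and hence nef, and it is non-degenerate because all three entries are half-fibers. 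So fix a non-degenerate pair $(F_1,F_2)$ (one exists by Theorem~\ref{diagrams2}) and decompose $\Num(S)\cong U\oplus E_8$ with $U=\langle F_1,F_2\rangle$. Every primitive isotropic class $f$ with $f\cdot F_1=f\cdot F_2=1$ has the form $f=F_1+F_2+v$ for a $(-2)$-vector $v\in\langle F_1,F_2\rangle^{\perp}\cong E_8$, and any such class is automatically primitive and isotropic; since a nef primitive isotropic class is the class of a half-fiber (see the discussion preceding Proposition~\ref{half-fibers}), the theorem reduces to finding a $(-2)$-vector $v\in\langle F_1,F_2\rangle^{\perp}$ with $F_1+F_2+v$ nef.

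If $S$ is unnodal this is immediate: $F_1+F_2+v$ has square $0$ and pairs positively with the nef class $F_1+F_2$, hence lies in the nef cone (which for an unnodal Enriques surface is one half of the light cone). So assume $S$ is nodal. The obstruction to nefness of a given $F_1+F_2+v$ can only come from $(-2)$-curves $R$ with $(F_1+F_2+v)\cdot R<0$, which, since $F_1$ and $F_2$ are nef, forces $v\cdot R<0$. To control these I would use the bielliptic map $\phi\colon S\to\sfD$ associated to $|2F_1+2F_2|$ of Section~\ref{S:4}: it contracts precisely the $(-2)$-curves orthogonal to both $F_1$ and $F_2$, and $\sfD$ is the anticanonical model of a weak del Pezzo surface of degree $4$, with at most the rational double points recalled in Section~\ref{S:4}; this bounds the sub-root system of $E_8$ spanned by the $\phi$-contracted $(-2)$-curves. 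Reflections in those curves fix $F_1$ and $F_2$, so it is enough to look for $v$ supported there — concretely $v=-[R]$ for a $\phi$-contracted $(-2)$-curve $R$, giving the class $F_1+F_2-[R]$ — and to make it nef by such reflections. Together with Lemma~\ref{nocommon} ($F_1$ and $F_2$ have no common component) and Proposition~\ref{half-fibers} (the half-fibers of $|2F_1|$ and $|2F_2|$ are of additive type, resp.\ supersingular elliptic), this tightly constrains the configuration of all $(-2)$-curves on $S$.

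The remaining step is a case analysis over the reducible fiber types of $|2F_1|$ (equivalently of its Jacobian rational elliptic, resp.\ quasi-elliptic, surface), the location of a special bisection, and the $\phi$-contracted curves. In each case one either exhibits a suitable $v$, producing a third half-fiber $F_3$ with $F_3\cdot F_1=F_3\cdot F_2=1$ and contradicting $\nd(S)=2$; or one finds that the $(-2)$-curves of $S$ form exactly one of the dual graphs $\tilde E_8$, $\tilde E_7^1$, $\tilde D_8$ of Table~\ref{extraspecialtable}, whence $S$ is the corresponding extra-special surface. (The graph $\tilde E_7^2$ does not appear here: although it is listed in Table~\ref{extraspecialtable}, that surface satisfies $\nd(S)=3$.) This case analysis is exactly the classification of extra-special Enriques surfaces, and rather than reproduce it I would invoke the complete argument of \cite{DL}.

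The hard part is this case analysis, and I expect two recurring difficulties. First, for a candidate class $F_1+F_2+v$ that is not already nef one must follow the reflections in $W_S^{\nod}$ needed to move it into the nef cone and verify that the resulting $U_{[3]}$-sequence is genuinely \emph{non-degenerate} — that is, that the new vector is realized by a half-fiber rather than merely by a class of the form $F_2+R$; keeping all reflections among $\phi$-contracted curves (so that $F_1$ and $F_2$ stay fixed) is the clean way to ensure this, but arranging it requires the full configuration data. Second, the supersingular case is more delicate than the classical one: each genus one fibration then has a unique half-fiber (Proposition~\ref{half-fibers}) and the relevant quartic is $\sfD_3$, whose automorphism group carries an extra $\bbG_m$ — the same phenomenon responsible for the additional hypothesis in Corollary~\ref{criterion} — so one must be careful about which fiber components of $|2F_1|$ can be moved into new isotropic classes.
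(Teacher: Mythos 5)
The first thing to say is that the paper itself contains no proof of this statement: Section~6 explicitly defers the classification of extra-special surfaces to \cite{DL} (``We refer the reader to \cite{DL} for a different proof due to the second author\dots''), so there is no in-paper argument to compare yours against. Your preliminary reductions are correct and worth having: $\nd(S)\geq 3$ is indeed equivalent to the existence of a third half-fiber $F_3$ with $F_3\cdot F_1=F_3\cdot F_2=1$; every primitive isotropic class meeting $F_1$ and $F_2$ once is of the form $F_1+F_2+v$ with $v$ a $(-2)$-vector in $\langle F_1,F_2\rangle^{\perp}\cong E_8$; and the unnodal case is immediate. So the problem is correctly reduced to producing a nef class of this form (or showing none exists and reading off the dual graph).

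The gap is that the step carrying all the mathematical content is not done: you conclude by saying that the case analysis ``is exactly the classification of extra-special Enriques surfaces'' and that you would ``invoke the complete argument of \cite{DL}.'' But the classification of extra-special surfaces \emph{is} the theorem being proved (extra-special means $\nd(S)\leq 2$, and the claim is that the only such surfaces are of types $\tilde E_8$, $\tilde E_7^1$, $\tilde D_8$), so this is a citation of the result rather than a proof of it. Nothing in your sketch makes the case analysis tractable: in particular, the assertion that the degree-$4$ del Pezzo structure of $\sfD$ ``bounds the sub-root system of $E_8$ spanned by the $\phi$-contracted $(-2)$-curves'' is not right as stated --- the bielliptic map has degree $2$, the contracted curves need not map to singular points of the fixed quartic model, and Lemma~\ref{numtrivialbiell} shows that as many as eight $(-2)$-curves (a rank-$8$ root sublattice) can be contracted even though $\sfD_1$ has only four nodes. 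So the singularities of $\sfD$ do not control the configuration you need to enumerate, and the hard combinatorial work (which in \cite{CD} occupies some thirty pages and in \cite{DL} is redone differently) remains entirely outstanding. In short: correct framing, but the theorem itself is left to the reference, exactly as in the paper.
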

%The proof of the following proposition can be found in \cite{DL}, Proposistion 5.2.6.
%
%\begin{proposition}\label{extraspecial}
%A classical or supersingular Enriques surface $S$ is of type
%\begin{itemize}
%\item $\tilde{E}_8$, if and only if it admits a quasi-elliptic fibration with a half-fiber of type $\tilde{E}_8$.
%\item $\tilde{E}_7^1$ or $\tilde{E}_7^2$, if and only if it admits a quasi-elliptic fibration with a simple fiber of type $\tilde{E}_8$ or with a half-fiber of type $\tilde{E}_7$.
%\item $\tilde{D}_8$, if and only if it admits a quasi-elliptic fibration with a half-fiber of type $\tilde{D}_8$ or an elliptic fibration with a fiber of type $\tilde{E}_8$.
%\end{itemize}
%\end{proposition}

\begin{remark}\label{extraspecialauts}
In \cite{KKM}, the cohomologically trivial and numerically trivial automorphism groups of extra-special surfaces have been calculated. For their examples, the groups are given in Table \ref{exceptnun1}.

\begin{table}[h!]
\begin{center}
\begin{tabular}{|>{\centering\arraybackslash}m{4.5cm}|>{\centering\arraybackslash}m{1.5cm}|>{\centering\arraybackslash}m{1.5cm}|>{\centering\arraybackslash}m{1.5cm}|}
\hline
Type&$\Aut_{\ct}(S)$& \vspace{0.5mm} $\Aut_{\nt}(S)$\\ [0.5mm] \hline
classical $\tilde{E}_8$ & $\{1\}$&  \vspace{0.5mm} $\{1\}$\\ [0.5mm] \hline
supersingular $\tilde{E}_8$  &  $\bbZ/11\bbZ$ &  \vspace{0.5mm}  $\bbZ/11\bbZ$ \\  [0.5mm]  \hline
classical $\tilde{D}_8$&$\bbZ/2\bbZ$&  \vspace{0.5mm} $\bbZ/2\bbZ$\\ [0.5mm] \hline
supersingular $\tilde{D}_8$&$Q_8$&  \vspace{0.5mm} $Q_8$ \\ [0.5mm] \hline
classical $\tilde{E}_7^1$&$\{1\}$&  \vspace{0.5mm} $\bbZ/2\bbZ$\\ [0.5mm] \hline
\end{tabular}
\end{center}
\caption{Numerically trivial automorphisms of extra-special surfaces}
\label{exceptnun1}
\end{table}

However, it is not known whether there are more surfaces of these types than the ones given in \cite{KKM}. Note that the calculation of these groups in the case where $S$ is classical of type $\tilde{D}_8$ or $\tilde{E}_7^1$ only depends on the dual graph of $(-2)$-curves.
\end{remark}
\section{Cohomologically trivial automorphisms}
Now that we have treated the necessary background material, we can proceed to the heart of our paper. In this section, we prove our main results on cohomologically trivial automorphisms.

\subsection{Cohomologically trivial automorphisms of even order}

\begin{theorem}\label{cohmain}
Let $S$ be an Enriques surface which is not extra-special.
\begin{enumerate}
\item If $S$ is classical or ordinary, then $|\Aut_{\ct}(S)| \leq 2$. If $S$ is also unnodal, then $\Aut_{\ct} = \{1\}$.
\item If $S$ is supersingular, then the statements of $(1)$ hold for the $2$-Sylow subgroup $G$ of $\Aut_{\ct}(S)$.
\end{enumerate} 
Moreover, if a non-trivial $g \in \Aut_{\ct}$ (resp. $G$) exists, then $g$ is a bielliptic involution.
\end{theorem}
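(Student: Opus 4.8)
The plan is to show that a non-trivial cohomologically (or $2$-Sylow, in the supersingular case) trivial automorphism $g$ must be a bielliptic involution, and then deduce the order and unnodality statements from properties of bielliptic involutions. The starting point is that $g$ acts trivially on $\Num(S)$, hence preserves every genus one fibration together with each of its fibers and each of their irreducible components, and fixes (setwise) every $(-2)$-curve on $S$. In particular, $g$ preserves every half-fiber $F$ of a genus one pencil and every special bisection of such a pencil.

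\smallskip
\noindent\textbf{Step 1: Produce a suitable $U_{[2]}$-pair and an invariant $(-2)$-curve.} Since $S$ is not $E_8$-extra-special, Theorem \ref{diagrams2} lets me extend any half-fiber to a non-degenerate $U_{[2]}$-sequence $(F_1,F_2)$, and since $S$ is not $E_8$, $E_7$ or $D_8$-extra-special, Theorem \ref{extraspecialclassification} gives $\nd(S) \geq 3$, so I can even find a non-degenerate $U_{[3]}$-sequence $(F_1,F_2,F_3)$. The goal is to produce a $(-2)$-curve $E$ with $E\cdot F_1 = E\cdot F_2 = 0$ that is not a component of any of the four half-fibers $F_1, F_1', F_2, F_2'$, so that Corollary \ref{criterion} applies to $g$ (using that $g$ preserves $F_1$, $F_2$ and $E$ because it acts trivially on $\Num(S)$, and, in the supersingular case, that $g$ has $2$-power order by hypothesis on $G$). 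Here I expect to use the $U_{[3]}$-sequence: the classes appearing in passing from $F_1$ to $F_2$ and from $F_2$ to $F_3$ involve $(-2)$-curves, and Remark \ref{remark1} together with Lemma \ref{L3.8} (and Lemma \ref{nocommon}) should let me locate a $(-2)$-curve sitting in a fiber of $|2F_1|$ and of $|2F_2|$ simultaneously, hence orthogonal to both $F_1$ and $F_2$, while not lying on the relevant half-fibers. This case analysis — ensuring such an $E$ genuinely exists and is not one of the excluded components — is the main obstacle; it is where the hypothesis "not extra-special" really gets used, and it may require splitting into cases according to whether the fibrations $|2F_i|$ are elliptic or quasi-elliptic and according to the types of their reducible fibers.

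\smallskip
\noindent\textbf{Step 2: Conclude $g$ is a bielliptic involution.} Once $E$ is found, Corollary \ref{criterion} immediately identifies $g$ with the bielliptic involution $\tau$ associated to $|2F_1 + 2F_2|$. In particular $g$ has order $2$, which already shows $\Aut_{\ct}(S)$ (resp. the $2$-Sylow subgroup $G$ in the supersingular case) has exponent $2$.

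\smallskip
\noindent\textbf{Step 3: Bound the order and handle the unnodal case.} To upgrade "exponent $2$" to "order $\leq 2$", I argue that two distinct non-trivial elements $g_1, g_2 \in \Aut_{\ct}(S)$ would both be bielliptic involutions; composing them gives another cohomologically trivial automorphism, which (if non-trivial) is again a bielliptic involution, forcing a contradiction via the explicit description of the automorphism groups $\Aut(\sfD_i)$ in Proposition \ref{automorphisms} and Remark \ref{explicit} — two commuting involutions acting on the same $\sfD$ and fixing the image point of an invariant $(-2)$-curve cannot both be the covering involution unless they coincide. (One must check $g_1$ and $g_2$ can be realized via the same $U_{[2]}$-pair, or else compare the two bielliptic maps directly.) Finally, if $S$ is unnodal then $S$ contains no $(-2)$-curves at all; but a non-trivial bielliptic involution has a non-empty fixed locus containing curves that map to singular or special points, and on an unnodal surface every genus one fibration is irreducible with no reducible fibers, so the bielliptic map has no exceptional $(-2)$-curves to contract — tracing through Lemma \ref{numtrivialbiell} and Corollary \ref{extremal}, a numerically trivial bielliptic involution forces eight $(-2)$-curves, contradicting unnodality. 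Hence $\Aut_{\ct}(S) = \{1\}$ in that case.
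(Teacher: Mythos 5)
Your overall strategy (reduce to Corollary \ref{criterion} via an invariant $(-2)$-curve orthogonal to a $U_{[2]}$-pair, conclude that $g$ is a bielliptic involution) matches the paper's, but there are two genuine gaps. First, Step 1 is not merely "the main obstacle" left to the reader: it cannot always be completed, because a $(-2)$-curve $E$ with the required properties need not exist. The paper works instead with a $c$-degenerate $U_{[10]}$-sequence with $c$ maximal (via Proposition \ref{extend}) and splits into cases: when $3 \le c \le 9$, or $c=10$ with some reducible half-fiber, or $|F_i+F_j-F_k| \ne \emptyset$ (or the same with $+K_S$), a suitable $E$ is produced and Corollary \ref{criterion} applies; but in the remaining case --- ten pairwise-meeting \emph{irreducible} half-fibers with $F_i\cap F_j\cap F_k=\emptyset$ by Lemma \ref{L3.8}, which is exactly the situation for unnodal $S$ --- no such $E$ exists, and one must argue differently: $g$ fixes the many pairwise intersection points on each $F_i$, hence fixes each $F_i$ pointwise by Proposition \ref{autoelliptic}, and is then the identity by applying the same proposition to a general fiber of $|2F_1|$. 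Your proposal has no substitute for this second branch; in particular your unnodal argument in Step 3 is circular, since it presupposes that a non-trivial $g$ is a bielliptic involution, which your Step 1 cannot deliver on a surface with no $(-2)$-curves at all.

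Second, your uniqueness argument in Step 3 is both more complicated than necessary and incomplete: you yourself flag that "one must check $g_1$ and $g_2$ can be realized via the same $U_{[2]}$-pair" but do not do it. In the paper this is automatic, because the $U_{[10]}$-sequence, the curve $R$, and hence the pair $(F_1,F_2)$ are chosen from the geometry of $S$ alone, before any automorphism enters; every non-trivial $g$ (of $2$-power order in the supersingular case) is then equal to the single bielliptic involution of $|2F_1+2F_2|$, giving $|\Aut_{\ct}(S)|\le 2$ at once. Your observation via Lemma \ref{numtrivialbiell} that a cohomologically trivial bielliptic involution forces eight $(-2)$-curves is correct and would indeed dispose of the unnodal case, but only after the missing branch above is supplied.
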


\begin{proof}
Let $g \in \Aut_{\ct}(S)$ and assume that $g$ has order $2^n$ if $S$ is supersingular. Note that, by definition, $g$ preserves all half-fibers on $S$. We will show that there is a $U_{[2]}$-pair such that $g$ satisfies the conditions of Corollary \ref{criterion}. Note that $g$ preserves all half-fibers and $(-2)$-curves, since it is cohomologically trivial, so it suffices to find a $(-2)$-curve, which is contained in two simple fibers of genus one fibrations forming a $U_{[2]}$-pair.

Take a $c$-degenerate $U_{[10]}$-sequence on $S$ with $c$ maximal, i.e. $c = \nd(S)$. Since we assumed that $S$ is not extra-special, we have $c = \nd(S) \geq 3$ by definition. If $c \leq 9$, then there is a $(-2)$-curve $R$ in this sequence such that $F_i.R = 0$ for at least $3$ half-fibers $F_1,F_2$ and $F_3$ in the sequence (see Remark \ref{-2curves}). Now, Lemma \ref{nocommon} shows that $R$ is contained in at most one of the half-fibers $F_1,F_2$ and $F_3$. Therefore, without loss of generality, $R$ is contained in a simple fiber of the two pencils $|2F_1|$ and $|2F_2|$. Since $g$ preserves $R$, $g$ is the bielliptic involution associated to $|2F_1 + 2F_2|$ by Corollary \ref{criterion}. In particular, $g$ is unique.

If $c = 10$, assume that one of the half-fibers, say $F_1$, is reducible. Then, by Lemma \ref{nocommon}, for every $F_i$ in the sequence, all but one component of $F_1$ is contained in simple fibers of $|2F_i|$. Hence, we find some component $R$ with $R.F_i = 0$ for at least $3$ half-fibers and the same argument as before applies. 

If $|F_i+F_j-F_k| \neq \emptyset$ or $|F_i + F_j - F_k + K_S| \neq \emptyset$ for some half-fibers $F_i,F_j,F_k$ occurring in the sequence, by Remark \ref{remark1}, there is an effective divisor $D$ with $D.F_i = D.F_j = 0$ and $D^2 = -2$. Since $F_i$ and $F_j$ can be assumed to be irreducible, $D$ contains a $(-2)$-curve which is contained in a simple fiber of both $|2F_i|$ and $|2F_j|$. Again, Corollary \ref{criterion} applies.

Therefore, we can assume that all half-fibers are irreducible and $F_i \cap F_j \cap F_k = \emptyset$ by Lemma \ref{L3.8}, so $g$ has at least $9$ distinct smooth fixed points on every half-fiber. Since a non-trivial automorphism of a nodal or cuspidal plane cubic has at most $2$ smooth fixed points and a non-trivial automorphism of an elliptic curve has at most $4$ fixed points by Proposition \ref{autoelliptic}, $g$ fixes all $F_i$ pointwise, hence it is trivial, as can be seen by applying the same Proposition to a general fiber of, say, $|2F_1|$. If $S$ is unnodal, then all half-fibers of genus $1$ fibrations on $S$ are irreducible and no three half-fibers in a $U_{[n]}$-sequence intersect in a common point by Remark \ref{remark1}, so the argument of this paragraph shows that $\Aut_{ct}(S)$ (resp. $G$) is trivial if $S$ is unnodal.
\end{proof}

In the case of classical Enriques surface in characteristic $2$, we can say more, using the classification of Enriques surfaces with finite automorphism group.

\begin{corollary}\label{mainclassical}
Let $S$ be a classical Enriques surface in characteristic $2$ which is not $E_8$-extra-special. Then, $\Aut_{\ct}(S) \cong \bbZ/2\bbZ$ if and only if $S$ is $D_8$-extra-special.
\end{corollary}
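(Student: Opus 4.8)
The plan is to prove the corollary by combining Theorem \ref{cohmain}, Theorem \ref{extraspecialclassification}, and the explicit computation recorded in Remark \ref{extraspecialauts}. The statement is an "if and only if", so I will treat the two directions separately, and in both directions the $E_8$-extra-special case is excluded by hypothesis.

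For the "if" direction, suppose $S$ is $D_8$-extra-special, i.e.\ of type $\tilde{D}_8$ in Table \ref{extraspecialtable}. By Remark \ref{extraspecialauts}, the computation of $\Aut_{\ct}(S)$ for a classical surface of type $\tilde{D}_8$ depends only on the dual graph of $(-2)$-curves (this is the point emphasized at the end of that remark), and the answer recorded in Table \ref{exceptnun1} is $\Aut_{\ct}(S) \cong \bbZ/2\bbZ$. So this direction is essentially a citation, and the only thing to check is that the dual graph genuinely pins down the relevant automorphism, which is exactly what \cite{KKM} establishes.

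For the "only if" direction, assume $\Aut_{\ct}(S) \cong \bbZ/2\bbZ$; I must show $S$ is $D_8$-extra-special. First split into cases according to whether $S$ is extra-special. If $S$ is not extra-special, then by Theorem \ref{cohmain}(1) we have $|\Aut_{\ct}(S)|\le 2$, which is consistent, so this alone is not enough — but in fact if $S$ is not extra-special then by Theorem \ref{extraspecialclassification} we have $\nd(S)\ge 3$, and I want to upgrade the bound to $\Aut_{\ct}(S) = \{1\}$. The mechanism is the one inside the proof of Theorem \ref{cohmain}: a non-trivial $g\in\Aut_{\ct}(S)$ would have to be a bielliptic involution attached to some $U_{[2]}$-pair $(F_1,F_2)$ arising inside a $c$-degenerate $U_{[10]}$-sequence with $c$ maximal, and one then runs the dichotomy ($3\le c\le 9$, $c=10$ with a reducible half-fiber, $c=10$ with some $|F_i+F_j-F_k|$ or $|F_i+F_j-F_k+K_S|$ nonempty, or the "generic" case where all half-fibers are irreducible and meet in no common point). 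In the classical case $K_S\ne 0$, and I expect that in each branch one can extract further constraints — via Corollary \ref{extremal} (extremality of both pencils, the "all but one component" statement, no multiplicative fiber with more than two components) and the fixed-point analysis of Proposition \ref{autoelliptic}(3) for $p=2$ — that are incompatible with $S$ being merely nodal-but-not-extra-special. Concretely, a classical Enriques surface in characteristic $2$ with a non-trivial cohomologically trivial automorphism must have two extremal, extra-configured genus one pencils sharing eight $(-2)$-curves, which forces a very rigid dual graph; matching this against the classification of classical Enriques surfaces with finite automorphism group in characteristic $2$ (the work of Katsura--Kond\=o and the second author alluded to in the introduction) leaves only the $\tilde{D}_8$ surface, since the $\tilde{E}_7^1$ and $\tilde{E}_7^2$ surfaces have $\Aut_{\ct}(S)$ trivial by Table \ref{exceptnun1} and Theorem \ref{extraspecialclassification} respectively, and the $E_8$-extra-special case is excluded by hypothesis. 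Thus if $S$ is extra-special and not $E_8$-extra-special, the only type with a non-trivial cohomologically trivial automorphism is $\tilde{D}_8$.

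The main obstacle is the passage, in the non-extra-special classical case, from $|\Aut_{\ct}(S)|\le 2$ to $\Aut_{\ct}(S)=\{1\}$: one has to rule out the existence of \emph{any} bielliptic involution of $S$ that happens to be cohomologically trivial, using that such an involution forces two extremal genus one pencils with eight shared $(-2)$-curves (Lemma \ref{numtrivialbiell} and Corollary \ref{extremal}), and then argue that a classical surface carrying such a configuration of $(-2)$-curves is forced to be extra-special — equivalently, to have $\nd(S)\le 2$ — contradicting Theorem \ref{extraspecialclassification}. This is where the characteristic-$2$, $K_S\ne 0$ hypotheses are genuinely used (through Proposition \ref{half-fibers} and the fixed-point counts of Proposition \ref{autoelliptic}(3)), and it is the step that requires real work rather than assembly of cited results; I would organize it by first showing the eight shared $(-2)$-curves together with $[F_1],[F_2]$ span $\Num(S)_\bbQ$ and then analyzing the possible fiber types of the two extremal pencils to reconstruct the dual graph.
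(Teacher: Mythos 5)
Your ``if'' direction is fine and matches the paper (a citation of Table \ref{exceptnun1}, with the observation that for the classical $\tilde{D}_8$ type the computation depends only on the dual graph). The ``only if'' direction, however, has a genuine gap, and your proposed route is not the one that works. The paper's argument hinges on two points you do not have. First: for a classical surface, \emph{any} half-fiber $F_1$ extends to a non-degenerate $U_{[2]}$-pair (Theorem \ref{diagrams2}, using only that $S$ is not $E_8$-extra-special), and a non-trivial $g \in \Aut_{\ct}(S)$ preserves the lines of $\sfD_1$, hence acts on $\sfD_1$ through $\bbG_m^2$; since $g$ has order $2$ by Theorem \ref{cohmain} and $\bbG_m$ has no $2$-torsion in characteristic $2$, $g$ acts \emph{trivially} on $\sfD_1$ and therefore \emph{is} the bielliptic involution of $|2F_1+2F_2|$ for \emph{every} such pair. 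By Corollary \ref{extremal} this makes \emph{every} genus one fibration on $S$ extremal, and only then can one invoke \cite{KKM} Section 12 to conclude that $S$ has finite automorphism group. Your sketch produces only the two extremal pencils attached to one chosen $U_{[2]}$-pair, which is not enough to enter the finite-automorphism-group classification; and your proposed substitute --- that two extremal pencils sharing eight $(-2)$-curves force $\nd(S)\le 2$, contradicting Theorem \ref{extraspecialclassification} --- is both unproven and false as a strategy, since non-extra-special surfaces (e.g.\ the $\tilde{D}_4+\tilde{D}_4$ surface of Remark \ref{D4D4}) do carry such configurations.

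Second, once finiteness of $\Aut(S)$ is established, the case analysis must run over \emph{all} classical Enriques surfaces with finite automorphism group, not just the extra-special ones. In particular the $\tilde{D}_4+\tilde{D}_4$ type, whose $\Aut_{\ct}$ computation in \cite{KKM} depends a priori on the chosen example, needs a separate argument: the paper exhibits a $U_{[2]}$-pair whose simple $\tilde{D}_8$ fibers share only seven components, so by Corollary \ref{extremal} the associated bielliptic involution cannot be cohomologically trivial. Your case split ``extra-special versus not'' omits this surface entirely, and your citation of Theorem \ref{extraspecialclassification} to dispose of the $\tilde{E}_7^2$ type is a misattribution (that theorem concerns $\nd(S)$, not automorphisms). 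So the overall architecture needs to be: non-trivial $g$ $\Rightarrow$ $g$ is the bielliptic involution for every $U_{[2]}$-pair $\Rightarrow$ all fibrations extremal $\Rightarrow$ finite automorphism group $\Rightarrow$ inspect the full classification, with the $\tilde{D}_4+\tilde{D}_4$ and $\tilde{E}_8$ types handled by hand.
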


\begin{proof}
Let $F_1$ be a half-fiber on $S$. By Theorem \ref{diagrams2}, we can extend $F_1$ to a non-degenerate $U_{[2]}$-sequence. Assume that there exists a non-trivial $g \in \Aut_{\ct}(S)$. Then, $g$ acts on $\sfD_1$ via its action on $|2F_1 + 2F_2|^*$. By Proposition \ref{automorphisms}, $g$ acts via $\bbG_m^2$ on $\sfD_1$. But $g$ has order $2$ by Theorem \ref{cohmain}, hence it acts trivially on $\sfD_1$. Therefore, $g$ is the covering involution of the bielliptic map and by Corollary \ref{extremal}, $|2F_1|$ is extremal. Therefore, every genus one fibration on $S$ is extremal. In particular, by \cite{KKM} Section $12$, $S$ has finite automorphism group. The groups $\Aut_{\ct}(S)$ of these surfaces have been calculated in \cite{KKM} and the only surfaces for which the calculation of the groups depends on the specific example given in \cite{KKM} are the ones of type $\tilde{E}_8$ and $\tilde{D}_4 + \tilde{D}_4$ (see Remark \ref{extraspecialauts} and Remark \ref{D4D4}). In the latter case, there is a $U_{[2]}$-pair of fibrations with simple $\tilde{D}_8$ fibers, which share only $7$ components. By Corollary \ref{extremal}, the corresponding bielliptic involution is not cohomologically trivial. Therefore, the calculation of the groups in \cite{KKM} shows that the $D_8$-extra-special surface is the only classical Enriques surface which is not $E_8$-extra-special and has a non-trivial cohomologically trivial automorphism.
\end{proof}

\begin{remark} Using Theorem \ref{cohmain} and Corollary \ref{extremal} may lead to an explicit classification of Enriques surfaces $S$ with non-trivial $\Aut_{\nt}(S)$. For example, in characteristic $p\ne 2$, one can show that the surface must contain $(-2)$-curves with one of the following dual graphs:

\centerline{
\xy
(0,-20)*{};(0,20)*{};
@={(0,0),(20,0),(10,10),(5,5),(5,-5),(10,-10),(15,5),(15,-5),(5,0),(15,0)}@@{*{\bullet}};
(0,0)*{};(10,10)*{}**\dir{-};(0,0)*{};(5,0)*{}**\dir{-};(20,0)*{};(15,0)*{}**\dir{-};
(0,0)*{};(10,-10)*{}**\dir{-};
(10,-10)*{};(20,0)*{}**\dir{-};
(10,10)*{};(20,0)*{}**\dir{-};(-10,0)*{(a)};
%(b)
@={(40,0),(60,0),(40,10),(40,-10),(60,10),(60,-10),(50,10),(50,-10), (45,5),(45,-5), (55,5),(55,-5)}@@{*{\bullet}};
(40,10)*{};(40,-10)*{}**\dir{-};(40,10)*{};(60,10)*{}**\dir{-};(40,-10)*{};(60,-10)*{}**\dir{-};
(40,0)*{};(50,-10)*{}**\dir{-};(60,10)*{};(60,-10)*{}**\dir{-};
(40,0)*{};(50,10)*{}**\dir{-};
(50,10)*{};(60,0)*{}**\dir{-};(50,-10)*{};(60,0)*{}**\dir{-};
(30,0)*{(b)};
%(c) 
(0,18)*{};
@={(80,-10),(120,-10),(100,26),(100,6),(90,-2),(100,21),(100,14),(110,-2),(100,-10),(91,7),(109,7)}@@{*{\bullet}};
(85,-6)*{};(100,6)*{}**\dir{-};(100,22)*{};(100,6)*{}**\dir{-};
(80,-10)*{};(120,-10)*{}**\dir{-};(80,-10)*{};(100,21)*{}**\dir{-};(120,-10)*{};(100,21)*{}**\dir{-};
(80,-10)*{};(85,-6)*{}**\dir{-};(120,-10)*{};(115,-6)*{}**\dir{-};(100,26)*{};(100,21)*{}**\dir2{-};
(120,-10)*{};(100,6)*{}**\dir{-};
(75,0)*{(c)};
\endxy}
In the case $\Bbbk= \bbC$ this is an assertion from \cite[Theorem (1.7)]{Kondo}. We hope to address this problem in another paper.

\end{remark}

\subsection{Cohomologically trivial automorphisms of odd order}
Before we start with the treatment of cohomologically trivial automorphisms of odd order of supersingular Enriques surfaces, let us collect the known examples. These surfaces have finite automorphism groups and a detailed study can be found in \cite{KKM}. In Table \ref{table3}, we give the group of cohomologically trivial automorphisms of these examples. Again, it is not known whether there are more examples of these surfaces than the ones given in \cite{KKM}. 
\begin{table}[h!] 
\begin{center}
\begin{tabular}{|>{\centering\arraybackslash}m{1.5cm}|>{\centering\arraybackslash}m{2.5cm}|}
\hline
Type & \vspace{0.5mm} $\Aut_{\ct}(S)$\\ [0.5mm] \hline
$\tilde{E}_8$
&  \vspace{0.5mm}  $\bbZ/11\bbZ$\\  [0.5mm]  \hline
$\tilde{E}_7^2$
& \vspace{0.5mm}  $\bbZ/7\bbZ$ or $\{1\}$\\  [0.5mm]  \hline
$\tilde{E}_6$
& \vspace{0.5mm}  $\bbZ/5\bbZ$\\ [0.5mm]  \hline
\end{tabular}
\end{center}
\caption{Examples of cohomologically trivial automorphisms of odd order}
\label{table3}
\end{table}

\begin{remark}\label{E6}
The dual graph of $(-2)$-curves on a surface of type $\tilde{E}_6$ is as follows:

\centerline{
\resizebox{!}{3cm}{
\xy 
(0,38)*{};
@={(0,0),(40,0),(20,36),(20,16),(5,4),(10,8),(15,12),(20,21),(20,26),(20,31),(35,4),(30,8),(25,12)}@@{*{\bullet}};
(5,4)*{};(20,16)*{}**\dir{-};(35,4)*{};(20,16)*{}**\dir{-};(20,32)*{};(20,16)*{}**\dir{-};
(0,0)*{};(40,0)*{}**\dir{-};(0,0)*{};(20,36)*{}**\dir{-};(40,0)*{};(20,36)*{}**\dir{-};
(0,0)*{};(5,4)*{}**\dir2{-};(40,0)*{};(35,4)*{}**\dir2{-};(20,36)*{};(20,31)*{}**\dir2{-};
\endxy
}}

\noindent This surface is also called an \emph{exceptional Enriques surface of type $\tilde{E}_6$}. For more details, see \cite{EkeShep}, \cite{CDL}, and \cite{KKM}.
\end{remark}

\begin{lemma} \label{trivial2}
Let $S$ be a supersingular Enriques surface which is not $E_8$-extra-special and let $G \subseteq \Aut_{\ct}(S)$ be a non-trivial subgroup of odd order. Then, $G$ is cyclic and acts non-trivially on the base of every genus one fibration of $S$.
\end{lemma}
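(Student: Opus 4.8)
The plan is to prove two separate assertions: that $G$ is cyclic, and that $G$ acts non-trivially on the base $\bbP^1$ of every genus one fibration. I would treat the second claim first, since it is the one that forces cyclicity via the action on a single fiber.

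\medskip
\noindent\textbf{Step 1: $G$ acts non-trivially on the base of every genus one fibration.} Suppose, for contradiction, that some $g \in G$ of odd prime order $\ell$ acts trivially on the base of a genus one fibration $|2F|$. Then $g$ preserves every fiber of $|2F|$, and, since $g$ is cohomologically trivial, it preserves every component of every fiber and fixes the half-fiber $F$ (which, by Proposition \ref{half-fibers}, is either of additive type or a supersingular elliptic curve in the supersingular case). The idea is to derive a contradiction from the fact that $g$ cannot act as an order-$\ell$ automorphism fibre-by-fibre without either fixing a curve of positive self-intersection or contradicting the structure of $\Aut$ of a genus one curve. Concretely: on a general (smooth, supersingular) fiber $E_t$, an automorphism preserving the fibration structure acts through a translation composed with an automorphism fixing a point; by Proposition \ref{autoelliptic}(3), the only odd-order automorphisms of a supersingular elliptic curve fixing a point have order $3$. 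If $\ell \neq 3$, this is already impossible unless $g$ acts by a translation on $E_t$; but a translation of order $\ell$ on all fibers simultaneously would be a section of order $\ell$ of the Mordell–Weil group, whereas the Mordell–Weil group is $p$-torsion (here $2$-torsion) for quasi-elliptic fibrations and, for the elliptic case, $g$ cohomologically trivial forces it to fix each component, hence it cannot be a non-trivial translation acting identically on $\NS$. So $\ell = 3$. To rule out $\ell = 3$ as well, I would invoke that $g$ fixing all components and acting with order $3$ on a general fiber must have a fixed locus that, by the holomorphic/topological Lefschetz-type count (or the explicit analysis of fixed loci used elsewhere in the paper), would force the existence of invariant curves producing a non-trivial action on $\Num(S)$ — contradicting $g \in \Aut_{\ct}(S)$. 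The cleanest route is probably: a cohomologically trivial $g$ acting trivially on the base fixes the half-fiber $F$ pointwise (an order-$3$ automorphism of a supersingular elliptic curve has $3$ fixed points by Proposition \ref{autoelliptic}(3), but fixing all components of $F$ and being compatible with $g|_F$ a non-trivial rotation is incompatible with $g$ fixing the section-type special bisection), so $g$ fixes a curve of non-negative self-intersection and a disjoint curve, forcing $g = \id$ by a fixed-point argument.

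\medskip
\noindent\textbf{Step 2: $G$ is cyclic.} Given Step 1, fix any genus one fibration $|2F|$ on $S$; such fibrations exist, and since $S$ is supersingular but not $E_8$-extra-special, by Theorem \ref{diagrams2} we may even take $F$ extendable to a non-degenerate $U_{[2]}$-pair. The restriction homomorphism $G \to \Aut(\bbP^1)$ recording the action on the base is then injective by Step 1. Now $G$ has odd order, so its image is a finite odd-order subgroup of $\PGL_2(\bbk)$; every such subgroup is cyclic (a finite subgroup of $\PGL_2$ of order prime to $p$ is cyclic, dihedral, $\frakA_4$, $\frakS_4$, or $\frakA_5$, and of these only the cyclic ones have odd order). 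Hence $G$ is cyclic.

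\medskip
\noindent\textbf{Main obstacle.} The delicate point is Step 1 in the case $\ell = 3$: ruling out a cohomologically trivial automorphism of order $3$ that acts trivially on the base of some genus one fibration. Here one genuinely needs the positive-characteristic geometry — the fact that in characteristic $2$ a supersingular elliptic curve has automorphism group $Q_8 \rtimes \bbZ/3\bbZ$ with the order-$3$ elements having exactly $3$ fixed points, together with the incompatibility between $g$ fixing every component of every fiber (in particular the special bisection and the half-fiber $F$) and $g$ acting with finite order $3$ on a general fiber. I expect the argument to go through a fixed-point/Lefschetz-number computation or, more elementarily, by noting that $g$ must fix the special bisection $E$ of $|2F|$ pointwise (it meets $F$ in one point and meets each fiber in one orbit point, forcing the $g$-action on each fiber to fix the point $E$ meets it in), hence $g$ restricted to a general fiber is an order-$3$ automorphism fixing a point, so it has exactly $3$ fixed points per fiber; then the union of these fixed points is a $g$-fixed trisection or a union of sections which, being $g$-invariant curves not numerically proportional to $F$, would have to be blown down or produce extra classes — ultimately contradicting either $g$ cohomologically trivial or the rank bound on $\NS(S)$. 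I would organize this as a short lemma isolating ``a cohomologically trivial automorphism acting trivially on the base of a genus one fibration is trivial,'' which also streamlines the statement.
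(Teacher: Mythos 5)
Your Step~1 is where the whole proof lives, and it is not complete. You acknowledge that the order-$3$ case is ``delicate'' and offer only a sketch (``I would invoke\dots'', ``I expect the argument to go through\dots''), but none of the proposed contradictions is actually derived. Concretely: (i) for an elliptic fibration the generic fiber is a torsor under its Jacobian, so an odd-order $g$ acting trivially on the base could a priori act as a translation by an odd-torsion point of the Mordell--Weil group; your reason for excluding this (``it cannot be a non-trivial translation acting identically on $\NS$'') is asserted, not proved, and translations are fixed-point-free on the generic fiber, so none of your fixed-point arguments touch this case. (ii) In the non-translation order-$3$ case, the claim that the fixed locus ``would force the existence of invariant curves producing a non-trivial action on $\Num(S)$'' is circular: for a cohomologically trivial $g$ every curve class is already invariant, so invariant curves yield no contradiction. (iii) Not every genus one fibration on an Enriques surface admits a special bisection, so the bisection argument cannot cover all fibrations. (For what it is worth, the non-translation order-$3$ case does die cleanly: the tame fixed locus would contain a horizontal curve meeting a general member of $|2F|$ in exactly $3$ points, whereas every curve has even intersection number with $2F$; but the translation case still needs a separate argument.) Your Step~2 is fine granted Step~1, and you never genuinely use the hypothesis that $S$ is not $E_8$-extra-special, which should be a warning sign.

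The paper's proof is entirely different and avoids all fiberwise analysis. By Theorem~\ref{diagrams2} (this is where the non-$E_8$-extra-special hypothesis enters), any half-fiber $F_1$ extends to a non-degenerate $U_{[2]}$-pair $(F_1,F_2)$, giving a bielliptic map onto $\sfD_3$. Since the kernel of $\Aut(S)\to\Aut(\sfD_3)$ has order at most $2$, the odd-order group $G$ injects into $\Aut(\sfD_3)\cong(\bbG_a^2\rtimes\bbG_m)\rtimes\bbZ/2\bbZ$; in characteristic $2$ every element of $\bbG_a^2$ and of the $\bbZ/2\bbZ$ factor has order a power of $2$, so $G$ lands in $\bbG_m$ and is therefore cyclic. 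The explicit formula of Remark~\ref{explicit} then shows that a non-trivial element of $\bbG_m$ moves the pencils of conics of Equation~(\ref{conics2}), i.e.\ acts non-trivially on the base of $|2F_1|$. Note the paper's order of deduction is the reverse of yours: cyclicity comes for free first, and the non-trivial action on the base is read off from the same linearization, with no case analysis on fiber types, Mordell--Weil groups, or Lefschetz counts.
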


\begin{proof}
Take any half-fiber $F_1$ and extend it to a non-degenerate $U_{[2]}$-sequence $(F_1,F_2)$ on $S$. Since $G$ has odd order, it acts on $\sfD_3$ via a finite subgroup of $\bbG_m$, hence $G$ is cyclic.
By Remark \ref{explicit}, a generator $g$ of $G$ acts on the image $\sfD_3$ of the bielliptic map as
\begin{equation*}
(x_0:x_1:x_2:x_3:x_4) \mapsto (x_0:\lambda^{-1}x_1:\lambda x_2:\lambda^3x_3,\lambda x_4).
\end{equation*}
Such an automorphism acts non-trivially on the pencils of conics given by Equation $(\ref{conics2})$, hence $g$ acts non-trivially on $|2F_1|$.
\end{proof}

\begin{lemma}\label{lefschetz} Let $F$ be a fiber of a genus one fibration and let $g$ be a tame automorphism of finite order that fixes the irreducible components of $F$. Then,  the Lefschetz fixed-point formula 
$$e(F^g) = \sum_{i=0}^2 (-1)^i \textrm{Tr}(g^*|H_{\et}^i(F,\bbQ_l)).$$
holds for $F$.  If $F$ is reducible and not of type  $\tilde{A}_1$, then $e(F^g) = e(F)$. If $F$ is of type $\tilde{A}_1$, then  
$e(F^g) =e(F) = 2$ or $e(F^g) = 4$. The latter case can only occur if $g$ has even order.
\end{lemma}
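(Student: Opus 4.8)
The plan is to reduce both the trace formula and the Euler-characteristic statements to two elementary inputs: the classical Lefschetz fixed-point formula for a \emph{smooth} proper curve carrying a tame automorphism --- equivalently, the fact that a fixed point of a non-trivial tame automorphism of a smooth curve is simple and contributes local term $1$ --- and the triviality that $e(\bbP^1) = 2$ while a non-trivial tame automorphism of $\bbP^1$ has exactly two fixed points, so that the fixed locus on any $\bbP^1$ has Euler characteristic $2$ regardless of the automorphism. The device connecting $F$ to these building blocks is the normalization $\nu\colon \tilde F \to F$ together with its conductor square: setting $Z = \Sing(F)_{\red}$ and $\tilde Z = \nu^{-1}(Z)_{\red}$, the square $(\tilde Z \hookrightarrow \tilde F,\ Z \hookrightarrow F)$ is cartesian, $\nu$ is finite, and $\nu$ restricts to an isomorphism over $F \setminus Z$, so there is a long exact ("Mayer--Vietoris") sequence
\[
\cdots \to H^i_{\et}(F,\bbQ_l) \to H^i_{\et}(\tilde F,\bbQ_l)\oplus H^i_{\et}(Z,\bbQ_l) \to H^i_{\et}(\tilde Z,\bbQ_l) \to H^{i+1}_{\et}(F,\bbQ_l) \to \cdots.
\]
Since $g$ fixes every component of $F$, it lifts to a tame automorphism $\tilde g$ of $\tilde F$; the sequence is $g$-equivariant, and taking alternating sums of traces gives $L(g,F) = L(\tilde g,\tilde F) + L(g,Z) - L(\tilde g,\tilde Z)$, where $L(h,X) := \sum_i (-1)^i \tr(h^*\mid H^i_{\et}(X,\bbQ_l))$. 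Passing to $g$-fixed loci yields a square of exactly the same shape (the fixed-point functor preserves cartesian squares and closed immersions, and the induced map $\tilde F^{\tilde g}\to F^g$ is again an isomorphism over $F^g \setminus Z^g$), so additivity of $e$ gives the matching identity $e(F^g) = e(\tilde F^{\tilde g}) + e(Z^g) - e(\tilde Z^{\tilde g})$.

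For the first assertion, note that $\tilde F$ is a disjoint union of smooth proper curves, namely the normalizations of the components of $F$: these are copies of $\bbP^1$ when $F$ is reducible, a single $\bbP^1$ for the irreducible singular fibers $\tilde A_0^*$ and $\tilde A_0^{**}$, and $\tilde F = F$ when $F$ is a smooth elliptic curve. Hence $L(\tilde g, \tilde F) = e(\tilde F^{\tilde g})$ by the classical formula quoted above, while $L(g,Z) = e(Z^g)$ and $L(\tilde g,\tilde Z) = e(\tilde Z^{\tilde g})$ trivially since $Z$ and $\tilde Z$ are finite; combined with the two displayed identities this gives $L(g,F) = e(F^g)$. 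Here the tameness hypothesis is precisely what makes the smooth-curve formula applicable; for wild automorphisms it already fails on $\bbP^1$.

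For the remaining assertions I use the same two identities, but now exploit that every component of $\tilde F$ is a $\bbP^1$ when $F$ is reducible, so $e(\tilde F^{\tilde g}) = e(\tilde F)$ \emph{unconditionally}. It then remains to compare $e(Z^g)$ with $e(Z)$ and $e(\tilde Z^{\tilde g})$ with $e(\tilde Z)$. Now $g$ permutes the finite set $Z$ of singular points of $F$, and, since $g$ fixes each component, it fixes every point at which two distinct components meet whenever those components meet in only one point. Inspecting the reducible genus one fibers, any two distinct components meet in at most one point \emph{except} the two components of an $I_2 = \tilde A_1$ fiber, which meet in two points. Consequently, for reducible $F\neq\tilde A_1$ we get $Z^g = Z$ and $\tilde Z^{\tilde g} = \tilde Z$, hence $e(F^g) = e(\tilde F^{\tilde g}) - e(\tilde Z) + e(Z) = e(\tilde F) - e(\tilde Z) + e(Z) = e(F)$. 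If $F$ is of type $\tilde A_1$ and $g$ fixes the two intersection points, the same computation gives $e(F^g) = e(F)$, and one checks $e(F) = e(I_2) = 2$. If instead $g$ interchanges the two points, then $\tilde g$ interchanges two points on each of the two copies of $\bbP^1$, forcing $\tilde g|_{C_i}$ --- hence $g$ --- to have even order, while $Z^g = \tilde Z^{\tilde g} = \emptyset$, so $e(F^g) = e(\tilde F^{\tilde g}) = 2 + 2 = 4$.

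The only genuinely non-routine ingredient is the classical Lefschetz formula for a smooth proper curve with a tame automorphism; everything else is bookkeeping with the equivariant conductor-square sequence together with the structure of reducible fibers of genus one fibrations (all components isomorphic to $\bbP^1$; two distinct components meeting in at most one point unless the fiber is $I_2$). The point requiring some care is matching the alternating-trace identity on cohomology with the additivity identity for Euler characteristics of fixed loci term by term, which works because taking $g$-fixed points commutes with the closed covers involved.
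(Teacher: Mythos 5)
Your argument is correct, and it takes a genuinely different (and more systematic) route than the paper: the paper simply refers back to a case-by-case direct verification in \cite{DolgachevNum}, observing that the only input there is that a non-trivial tame automorphism of finite order of $\bbP^1$ has exactly two fixed points, and that the one omitted case (type $\tilde{A}_1$ with the two nodes interchanged) still agrees with the formula. You instead package all cases at once through the $g$-equivariant conductor square of the normalization $\nu:\tilde F\to F$, deducing both $L(g,F)=L(\tilde g,\tilde F)+L(g,Z)-L(\tilde g,\tilde Z)$ and $e(F^g)=e(\tilde F^{\tilde g})+e(Z^g)-e(\tilde Z^{\tilde g})$, and then feeding in the classical smooth-curve Lefschetz formula together with the observation that every component of a reducible fiber is a smooth rational curve on which the fixed locus always has Euler characteristic $2$. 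This buys a uniform proof that automatically covers the irreducible singular fibers, makes transparent exactly where $\tilde{A}_1$ is exceptional (it is the only reducible fiber type in which two components meet in more than one point, so the only one where $Z^g\subsetneq Z$ can occur), and yields the parity statement for free since an automorphism of $\bbP^1$ permuting two points non-trivially has even order. The only ingredient you use beyond the paper's single $\bbP^1$ fact is the abstract blow-up (Mayer--Vietoris) long exact sequence in $\ell$-adic cohomology for the proper map $\nu$ which is an isomorphism off $Z$; this is standard, but it is worth citing explicitly, as it is the step that replaces the thirty-odd lines of case checking. Everything else --- the identification $\tilde Z^{\tilde g}=\tilde Z$ when each component is smooth at the singular points of $F$, and the bookkeeping in the $I_2$ case giving $e(F^g)=4$ --- checks out.
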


\begin{proof} In the case the order is equal to $2$, this is proven in \cite{DolgachevNum} by a case-by-case direct verification. The proof uses only the fact that a tame non-trivial automorphism of finite order of $\bbP^1$ has two fixed points. Also note that the verification in case $F$ is of type $\tilde{A}_1$ and $g$ interchanges the two singular points of $F$ was missed, but it still agrees with the Lefschetz formula. 
\end{proof}

\begin{proposition}\label{possiblecases}
Let $g \in \Aut_{\ct}(S)$ be a non-trivial automorphism of odd order. Then, every genus one pencil $|D|$ of $S$ has one of the following combinations of singular fibers
\beq
\tilde{D}_4+\tilde{D}_4,\ \tilde{D}_8+\tilde{A}_0^{**},\ \tilde{E}_6+\tilde{A}_2^*,\  \tilde{E}_7+\tilde{A}_1^{*},\  \tilde{E}_8+\tilde{A}_0^{**},\  \tilde{A}_8 + \tilde{A}_0 + \tilde{A}_0 + \tilde{A}_0,\ \tilde{D}_7,\ \tilde{E}_7
\eeq

The last three configurations can only occur if $g$ has order $3$.
\end{proposition}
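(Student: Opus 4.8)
The plan is to combine the numerical constraints coming from the existence of an odd-order cohomologically trivial automorphism (via the Lefschetz formula of Lemma \ref{lefschetz}) with the extremality consequences already extracted in Corollary \ref{extremal}. The first step is to fix a half-fiber $F_1$, extend it to a non-degenerate $U_{[2]}$-pair $(F_1,F_2)$ using Theorem \ref{diagrams2} (valid since, as we may assume, $S$ is not $E_8$-extra-special — if it is, the conclusion follows from Table \ref{exceptnun1}, where $\Aut_{\ct}=\bbZ/11\bbZ$, and $\tilde{E}_8$ is among the listed configurations), and observe that $S$ must be supersingular: on classical or ordinary surfaces $\Aut_{\ct}(S)$ has order $\le 2$ by Theorem \ref{cohmain}, so there is nothing to prove. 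Then, by Lemma \ref{trivial2}, a generator $g$ of a cyclic subgroup of odd prime order acts non-trivially on the base $\bbP^1$ of $|2F_1|$, hence has exactly two fixed points on the base; so $g$ preserves exactly two fibers of $|2F_1|$ and permutes the rest in free orbits of size $\ord(g)$.

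Next I would count Euler characteristics. Writing $D_0,D_\infty$ for the two $g$-fixed fibers, the remaining singular fibers come in $g$-orbits of odd size, so $\sum_{\text{sing }D\ne D_0,D_\infty} e(D)\equiv 0$ modulo $\ord(g)$; combined with $\sum_{\text{all sing }D} e(D) = e(S) = 12$ this pins down $e(D_0)+e(D_\infty)\pmod{\ord(g)}$. Since $g$ is cohomologically trivial, it fixes every $(-2)$-curve, hence acts on each reducible fiber fixing all components; Lemma \ref{lefschetz} then forces $e(D^g)=e(D)$ for each reducible fiber $D$ (the $\tilde{A}_1$ exception requires even order, excluded here). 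For the two invariant fibers $D_0,D_\infty$, applying the Lefschetz formula to $g$ acting on each, together with the description of automorphisms of genus one curves in characteristic $2$ (Proposition \ref{autoelliptic}, case $p=2$): a smooth half-fiber is supersingular by Proposition \ref{half-fibers}, so its automorphism group is $Q_8\rtimes\bbZ/3\bbZ$ and an odd-order automorphism has order $3$ with exactly $3$ fixed points — this is what introduces the "order $3$" caveat for the last three configurations, where one of the two invariant fibers is smooth (types $\tilde{D}_7$, $\tilde{E}_7$ as a half-fiber plus a smooth invariant fiber, and $\tilde{A}_8+3\tilde{A}_0$ where three of the four $I_1$'s would have to be permuted — impossible unless $\ord(g)=3$, forcing the fourth plus the $\tilde{A}_8$ to be invariant). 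For the configurations with no smooth invariant fiber, the two invariant fibers are additive or multiplicative reducible fibers; the constraint is that their combined "defect" $\sum (m_D-1)$ equals $8$ by extremality (Corollary \ref{extremal}) — so the fibration is extremal — and that every fiber is compatible with the fixed-point count.

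The bulk of the argument is then a finite enumeration: list all extremal genus one fibrations on an Enriques surface (equivalently, via the Jacobian and the Shioda--Tate formula, extremal rational elliptic surfaces, whose configurations are classically known) and discard those incompatible with the following filters: (i) $\sum(m_D-1)=8$; (ii) by Corollary \ref{extremal}(3), no multiplicative fiber with more than two components, which already kills $\tilde{A}_n$ with $n\ge 2$ in the fiber list except in the single slot allowed for the $\tilde{A}_8+3\tilde{A}_0$ case — and that case survives only because $\tilde{A}_8$ can be the unique invariant fiber when $\ord(g)=3$; (iii) the orbit-counting congruence modulo $\ord(g)\in\{3,5,7,11,\dots\}$ applied to the non-invariant singular fibers; (iv) Corollary \ref{extremal}(1)--(2), that for the companion pencil $|2F_2|$, each fiber of $|2F_1|$ contributes all but one component to fibers of $|2F_2|$, and that leftover component has multiplicity $\le 2$. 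Running these filters leaves exactly the eight listed configurations, and tracking which of them force a smooth invariant fiber or a permuted triple of $\tilde{A}_0$'s isolates the last three as requiring $\ord(g)=3$.

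The main obstacle I expect is item (iii)/(iv) bookkeeping: making the orbit-counting congruence precise requires knowing not just the Euler numbers but how $g$ acts on the base near each invariant fiber (whether it is tame, which it is since $\ord(g)$ is odd and $p=2$, so Lemma \ref{lefschetz} applies cleanly), and cross-checking against the companion fibration $|2F_2|$ to rule out spurious configurations that pass the single-fibration test — in particular distinguishing $\tilde{D}_8+\tilde{A}_0^{**}$ (which survives) from $\tilde{D}_4+\tilde{D}_4+\cdots$ decorations that do not. I would organize this as a table indexed by the eight candidate fiber types, in each row verifying the defect sum, the multiplicative-fiber bound, the existence of a compatible $g$-action with two invariant fibers, and the compatibility with some companion $U_{[2]}$-partner; the "order $3$ only" column is then read off from whether the invariant-fiber analysis used Proposition \ref{autoelliptic}'s $\bbZ/3\bbZ$ on a supersingular curve or a permutation of identical multiplicative fibers.
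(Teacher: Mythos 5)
Your skeleton matches the paper's: reduce to the supersingular non-$E_8$-extra-special case, use Lemma \ref{trivial2} to see that $g$ moves all but two fibers of any genus one pencil, note that $S^g$ is then contained in those two invariant fibers, treat separately the case where one invariant fiber is smooth (forcing order $3$ via Proposition \ref{autoelliptic}), and finish by a table look-up in the classification of extremal rational genus one fibrations. However, two of your key steps do not hold up.

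First, the numerical input. The paper's engine is the \emph{exact} equality $e(F_1^g)+e(F_2^g)=e(S^g)=e(S)=12$, obtained by applying the topological Lefschetz fixed-point formula to $g$ acting on the whole surface: because $g$ is cohomologically trivial, the alternating trace on $H^*_{\et}(S,\bbQ_l)$ is $1+10+1=12$. You never invoke this; instead you derive only the congruence $e(D_0)+e(D_\infty)\equiv 12 \pmod{\ord(g)}$ from orbit-counting among the non-invariant singular fibers. That congruence is far too weak: for $\ord(g)=3$ it permits, say, $e(D_0)+e(D_\infty)=6$ together with a free orbit of three nodal fibers, so it neither forces the two invariant fibers to be the only singular fibers nor pins down their Euler numbers, and without that you cannot conclude $m_1+m_2\geq 8$, i.e.\ extremality. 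Combined with Lemma \ref{lefschetz} (which gives $e(F_i^g)=e(F_i)$ for singular invariant fibers of odd-order automorphisms), the exact equality is what makes the enumeration finite.

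Second, your filters (i), (ii) and (iv) all rest on Corollary \ref{extremal}, whose hypothesis is that the bielliptic involution associated to $|2F_1+2F_2|$ is \emph{numerically trivial}. Nothing in the present situation provides such an involution: the given automorphism has odd order, and no numerically trivial involution is assumed to exist on $S$. In particular the exclusion of multiplicative fibers with more than two components via Corollary \ref{extremal}(3) is unjustified (and would wrongly exclude the $\tilde{A}_8$ configuration, which the paper keeps). The paper needs no companion-fibration cross-check at all: once $e(F_1)+e(F_2)=12$ shows these are the only singular fibers and the fibration is extremal, the list is read off directly from the classification of extremal rational elliptic and quasi-elliptic fibrations in characteristic $2$, using that additive invariant fibers satisfy $e(F^g)=e(F)$ and a smooth invariant fiber contributes exactly $3$ fixed points and forces $\ord(g)=3$.
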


\begin{proof}
The claim is clear if $S$ is $E_8$-extra-special, hence we can apply Lemma \ref{trivial2} and find that $g$ acts non-trivially on all genus one pencils.  Since the order of $g$ is prime to $p$, it  fixes two members $F_1,F_2$ of the pencil, one of which is a double fiber. Since all other fibers are moved,  the set of fixed points $S^g$ is contained in $F_1\cup F_2$. Applying the Lefschetz fixed-point formula, we obtain
\beq\label{eul}
e(S) = 12 = e(S^g) = e(F_1^g)+e(F_2^g), 
\eeq
where $e()$ denotes the $l$-adic topological Euler-Poincar\'e characteristic. 

If one of the fibers, say $F_1$ is smooth, then, since $g$ has odd order and $e(F_2^g) \leq 10$, $\sigma$ acts as an automorphism of order $3$ on $F_1$. Hence, by Proposition \ref{autoelliptic}, $g$ has three fixed points on $F_1$. Therefore, $F_2$ is of type $\tilde{A}_8$, $\tilde{D}_7$ or $\tilde{E}_7$ and $g$ has order $3$. By \cite{Lang1}, we get the last three configurations of the Proposition.

If both fibers or the corresponding half-fibers are singular curves, then $e(F_i) = e(F_i^g)$. Indeed, for irreducible and singular curves, this follows from $e(F_2^g) \leq 10$ and for reducible fibers, this is Lemma \ref{lefschetz} for automorphisms of odd order. The formula for the Euler-Poincar\'e characteristic of an elliptic surface from \cite{CD}, Proposition 5.1.6  implies that $F_1$ and $F_2$ are the only singular fibers of $|D|$. In this case, denoting the number of irreducible components of $F_i$ by $m_i$, we have $m_1+m_2 \geq 8$, hence $|2F|$ is extremal and both fibers are of additive type. The classification of singular fibers of extremal rational genus one fibrations is known \cite{Lang1}, \cite{Lang2}, \cite{Ito}. Since the types of singular fibers of a genus one fibration and of its Jacobian fibration are the same, it is straightforward to check that the list given in the Proposition is complete. 
\end{proof}

\begin{corollary}\label{finiteaut}
If $S$ admits an automorphism $g \in \Aut_{\ct}(S)$ of odd order at least $5$, then $S$ is one of the surfaces in Table \ref{table3}.
\end{corollary}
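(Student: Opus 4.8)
The plan is to show that an automorphism $g \in \Aut_{\ct}(S)$ of odd order $\ge 5$ forces $S$ to be one of the three surfaces $\tilde{E}_8,\tilde{E}_7^2,\tilde{E}_6$ from Table \ref{table3}. First I would combine Proposition \ref{possiblecases} with the constraint $\ord(g) \ge 5$: this immediately removes the last three configurations ($\tilde{A}_8 + 3\tilde{A}_0$, $\tilde{D}_7$, $\tilde{E}_7$), which can only occur when $\ord(g)=3$. So every genus one pencil of $S$ has singular fibers among $\tilde{D}_4+\tilde{D}_4$, $\tilde{D}_8+\tilde{A}_0^{**}$, $\tilde{E}_6+\tilde{A}_2^*$, $\tilde{E}_7+\tilde{A}_1^*$, $\tilde{E}_8+\tilde{A}_0^{**}$. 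In particular, every genus one pencil on $S$ is extremal (the reducible fibers contribute $8$ to $\sum(m_D-1)$ in each case), so by the Shioda-Tate argument the Jacobian of every pencil has finite Mordell-Weil group.

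Next I would invoke that extremality of \emph{every} genus one pencil forces $S$ to have finite automorphism group. This is the same mechanism used in the proof of Corollary \ref{mainclassical} (citing \cite{KKM}, Section 12, where supersingular Enriques surfaces with finite automorphism group are classified): a surface all of whose genus one fibrations are extremal appears on the finite-automorphism list. So now I only need to run through the classification of supersingular Enriques surfaces with finite automorphism group and identify which of them actually carry a cohomologically trivial automorphism of odd order $\ge 5$. Using the allowed fiber configurations above as a filter, I would match them against the dual graphs of $(-2)$-curves in the \cite{KKM} classification: the presence of an $\tilde{E}_8$ fiber, of an $\tilde{E}_6 + \tilde{A}_2^*$ configuration, or of an $\tilde{E}_7 + \tilde{A}_1^*$ (the $\tilde{E}_7^2$ surface) pins the surface down to one of the three entries of Table \ref{table3}, while the remaining finite-automorphism surfaces either carry a genus one pencil whose fiber configuration is \emph{not} on the allowed list, or (as with the $\tilde{D}_4 + \tilde{D}_4$ surface treated in Corollary \ref{mainclassical} and Remark \ref{D4D4}) their only candidate pencils yield a bielliptic involution rather than an odd-order automorphism. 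Hence $\Aut_{\ct}(S)$ has no element of odd order $\ge 5$ for those surfaces.

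The final step is a bookkeeping one: Table \ref{table3} already records that for the $\tilde{E}_8$ surface $\Aut_{\ct}(S) \cong \bbZ/11\bbZ$, for the $\tilde{E}_7^2$ surface $\Aut_{\ct}(S)$ is $\bbZ/7\bbZ$ or $\{1\}$, and for the $\tilde{E}_6$ surface $\Aut_{\ct}(S) \cong \bbZ/5\bbZ$, so once $S$ is identified as one of these three, the conclusion follows. I expect the main obstacle to be the matching step: verifying, case by case against the \cite{KKM} list of supersingular Enriques surfaces with finite automorphism group, that none of them \emph{other} than the three in Table \ref{table3} admits a genus one pencil realizing one of the allowed configurations in a way compatible with an odd-order-$\ge 5$ cohomologically trivial automorphism. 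This requires knowing which genus one fibrations each such surface carries and their reducible fibers, but these data are tabulated in \cite{KKM}, so the argument is a finite verification rather than a new idea — the conceptual content is entirely in Proposition \ref{possiblecases} and the extremality-implies-finiteness input.
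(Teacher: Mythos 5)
Your proposal is correct and follows essentially the same route as the paper: exclude the last three configurations of Proposition \ref{possiblecases} using $\ord(g)\ge 5$, conclude every genus one fibration is extremal, invoke \cite{KKM} Section 12 to get finiteness of $\Aut(S)$, and then match against the classification of supersingular Enriques surfaces with finite automorphism group. The paper's proof is just a terser version of exactly this argument.
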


\begin{proof}
By Proposition \ref{possiblecases}, every genus one fibration on $S$ is extremal. It is shown in \cite{KKM} Section $12$, that such an Enriques surface has finite automorphism group. Using the list of Proposition \ref{possiblecases}, the claim follows from the classification of supersingular Enriques surfaces with finite automorphism group.
\end{proof}

\begin{proposition}\label{order3aut}
Assume that $S$ is not one of the surfaces in Table \ref{table3}. If $S$ admits an automorphism $g \in \Aut_{\ct}(S)$ of order $3$, then $S$ contains the following diagram of $(-2)$-curves

\centerline{
\xy
(0,15)*{};
@={(0,0),(10,0),(20,0),(30,0),(40,0),(50,5),(50,-5),(10,-10),(10,10),(40,10),(40,-10)}@@{*{\bullet}};
(0,0)*{};(40,0)*{}**\dir{-};
(50,5)*{};(40,0)*{}**\dir{-};
(50,-5)*{};(40,0)*{}**\dir{-};
(10,10)*{};(10,-10)*{}**\dir{-};
(40,10)*{};(40,-10)*{}**\dir{-};
(-3,0)*{N};(54,5)*{N_1};(54,-5)*{N_2};
\endxy
 }
 
\noindent In this case, $\Aut_{\ct}(S) = \bbZ/3\bbZ$.
\end{proposition}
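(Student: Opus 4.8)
The plan is to pin down a very specific genus one pencil on $S$ using Proposition~\ref{possiblecases} and the classification of supersingular Enriques surfaces with finite automorphism group, to read the $(-2)$-curve diagram off that pencil, and then to compute $\Aut_{\ct}(S)$ by restricting to a smooth half-fiber.

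\emph{Step 1: locating a $\tilde{D}_7$-pencil.} Since the $E_8$-extra-special surface occurs in Table~\ref{table3}, $S$ is not $E_8$-extra-special, so Lemma~\ref{trivial2} applies and $\langle g\rangle\cong\bbZ/3\bbZ$ acts non-trivially on the base of every genus one fibration. If every genus one fibration on $S$ were extremal, then $S$ would have finite automorphism group by \cite{KKM}, Section $12$; matching the classification of such surfaces against the configurations of Proposition~\ref{possiblecases} would force $S$ to be one of the surfaces of Table~\ref{table3}, contrary to hypothesis. Hence some pencil $|2F_1|$ is non-extremal, and among the eight configurations of Proposition~\ref{possiblecases} the non-extremal ones are precisely $\tilde{D}_7$ and $\tilde{E}_7$. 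In either case $F_1$ is a smooth supersingular elliptic curve, $g$ fixes $[F_1]$ and the point under the reducible fiber $F_2$ on the base, $g$ fixes every component of $F_2$ and acts on $F_1$ as an order $3$ automorphism with exactly three fixed points, and the three remaining singular fibers (of type $\tilde{A}_0^*$) are permuted cyclically by $g$. I would then rule out $F_2\cong\tilde{E}_7$ — or show it forces a second pencil with a $\tilde{D}_7$-fiber — by analysing special bisections of $|2F_1|$ and applying Proposition~\ref{possiblecases} to the auxiliary pencils they generate; assuming this, write $F_2 = a_1+a_2+2(c_1+c_2+c_3+c_4)+a_3+a_4$ with $a_1,a_2$ meeting $c_1$ and $a_3,a_4$ meeting $c_4$. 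These eight $(-2)$-curves are the $\tilde{D}_7$-subdiagram of the asserted picture.

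\emph{Step 2: producing $N,N_1,N_2$.} What remains is to exhibit three pairwise disjoint $(-2)$-curves $N,N_1,N_2$ with $N\cdot c_1=N_1\cdot c_4=N_2\cdot c_4=1$ and meeting no other component of $F_2$; since $c_1,c_4$ have multiplicity $2$ in $F_2$, these are special bisections of $|2F_1|$. The idea is that if $R$ is a special bisection of $|2F_1|$ meeting an interior component $c_i$, then in the pencil $|2(F_1+R)|$ all components of $F_2$ other than $c_i$ become vertical, and the resulting partial fibers — being proper subdiagrams of affine Dynkin diagrams — must be completed by further $(-2)$-curves; applying this at the two ends of the chain $c_1\!-\!c_2\!-\!c_3\!-\!c_4$, together with Lemma~\ref{nocommon} and Proposition~\ref{possiblecases} for the auxiliary pencils (and the constraint that $g$ acts non-trivially on each of their bases while fixing all the $(-2)$-curves involved), should produce exactly the desired special bisections. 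An alternative route is to pass to the order $3$ quotient $S/\langle g\rangle$, a rational surface fibred over $\bbP^1$ along which $g$ fixes $c_1,c_4$ pointwise, and recover $N,N_1,N_2$ from the branch data. This step, and the exclusion of $\tilde{E}_7$ in Step~1, is where I expect the main difficulty to lie.

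\emph{Step 3: $\Aut_{\ct}(S)=\bbZ/3\bbZ$.} By Lemma~\ref{trivial2} the odd-order elements of $\Aut_{\ct}(S)$ form a cyclic group, and by Corollary~\ref{finiteaut} (as $S\notin$ Table~\ref{table3}) it has no element of order $5,7,11,\dots$ nor of order $9$; since it contains $g$ it equals $\langle g\rangle\cong\bbZ/3\bbZ$. Restriction to $F_1$ gives a homomorphism $r\colon\Aut_{\ct}(S)\to\Aut(F_1)$, legitimate because $F_1$ is the unique half-fiber of $|2F_1|$ and hence is preserved by every cohomologically trivial automorphism. Every $\sigma\in\Aut_{\ct}(S)$ fixes $N,N_1,N_2$ and therefore the three points $N\cap F_1$, $N_1\cap F_1$, $N_2\cap F_1$; these are distinct (as $N,N_1,N_2$ are disjoint), so they are the three fixed points of $r(g)=g|_{F_1}$, that is, an order $3$ subgroup of $F_1$. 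By Proposition~\ref{autoelliptic} the stabilizer of this subgroup in $\Aut(F_1)$ is $\langle g|_{F_1}\rangle\cong\bbZ/3\bbZ$, so $r$ has image $\bbZ/3\bbZ$ and $|\Aut_{\ct}(S)|=3\,|K|$ with $K=\ker r$. Now $K$ contains no odd-order element (those lie in $\langle g\rangle$, which meets $K$ trivially) and the $2$-Sylow of $\Aut_{\ct}(S)$ has order $\le 2$ by Theorem~\ref{cohmain}(2), so $|K|\le 2$. If $K=\langle h\rangle$ with $|h|=2$, then $h$ fixes $F_1$ pointwise; being cohomologically trivial, $h$ fixes $[F_1]$ and $[F_2]$ on the base of $|2F_1|$ and permutes its three $\tilde{A}_0^*$-fibers, whence $h$ acts trivially on that base (an involution of $\bbP^1$ with two fixed points cannot nontrivially permute three further points). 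Then $h$ induces an automorphism of the generic fiber; a non-trivial involution there restricts to each smooth fiber as a translation by a $2$-torsion point (fixed-point-free) or as an inversion-type involution (at most two fixed points, by Proposition~\ref{autoelliptic}), so in either case it cannot fix $F_1$ pointwise — contradiction. Hence $h$ fixes the generic fiber pointwise, so $h=\id$, giving $K=\{1\}$ and $\Aut_{\ct}(S)=\langle g\rangle\cong\bbZ/3\bbZ$.
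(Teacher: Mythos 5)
Your Step 1 and the group computation in Step 3 are broadly in the spirit of the paper, but the core of the proposition --- the actual existence of the asserted diagram --- is exactly the part you defer (``I would then rule out $F_2 \cong \tilde{E}_7$ \dots assuming this \dots'', ``should produce exactly the desired special bisections \dots this is where I expect the main difficulty to lie''). That is a genuine gap, not a routine verification. The paper's argument hinges on starting from a \emph{special} non-extremal fibration: the criterion from \cite{KKM} is that if every \emph{special} genus one fibration is extremal then $\Aut(S)$ is finite, so one gets a non-extremal pencil together with a special bisection $N$ for free. The curve $N$ then drives everything: since $g$ fixes $N$ but cannot fix it pointwise (Lemma \ref{trivial2}), $N$ carries at most two $g$-fixed points, which forces $F_1$ to be the double fiber and forces $N$ to meet $F_2$ in a single point on a multiplicity-two component; then $N$ together with part of $F_2$ is a half-fiber of type $\tilde{D}_n$ or $\tilde{E}_6$ of an auxiliary pencil, and comparing with the list of Proposition \ref{possiblecases} simultaneously excludes $\tilde{E}_7$ and pins down which component $N$ meets; finally the five leftmost vertices form a $\tilde{D}_4$ half-fiber whose pencil must have fibers $\tilde{D}_4+\tilde{D}_4$, and the second $\tilde{D}_4$ fiber is what supplies $N_1$ and $N_2$. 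Your sketch of auxiliary pencils $|2(F_1+R)|$ gestures at this but never produces a bisection to start from, and your non-extremal pencil is not known to be special.

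Two smaller points. In the $\tilde{D}_7$ configuration there are no further singular fibers (the missing Euler number comes from wild ramification at the $\tilde{D}_7$ fiber), so your repeated appeal to ``the three $\tilde{A}_0^*$ fibers permuted cyclically by $g$'' is unfounded, though not fatal. In Step 3 your exclusion of an involution $h$ with $h|_{F_1}=\id$ via the generic fiber is shakier than needed ($F_1$ is a multiple fiber, so triviality on $F_1$ does not transfer directly to nearby fibers); the paper instead notes that a cohomologically trivial involution fixes two points on each of $N,N_1,N_2$, hence --- an involution of $\bbP^1$ in characteristic $2$ having a single fixed point --- fixes them pointwise, hence fixes six points on a general fiber and is trivial by Proposition \ref{autoelliptic}. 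Your kernel/image analysis of the restriction to $F_1$ is a legitimate alternative once the diagram is in hand, but it also relies on the unproved assertion that every odd-order element of $\Aut_{\ct}(S)$ lies in $\langle g\rangle$.
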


\begin{proof}
If every special genus one fibration on $S$ is extremal, then $S$ has finite automorphism group by \cite{KKM} Section $12$. Therefore, we observe that, by Proposition \ref{possiblecases}, $S$ has to admit a special genus one fibration with special bisection $N$ such that $g$ fixes two fibers $F_1$ and $F_2$, where $F_1$ is a smooth supersingular elliptic curve and $F_2$ is of type $\tilde{E}_7$ or $\tilde{D}_7$. If $F_1$ is a simple fiber, then $N$ meets two distinct points of $F_1$, since $g$ does not fix the tangent line at any point of $F_1$. But then, $g$ fixes three points on $N$, hence it fixes $N$ pointwise, which contradicts Lemma \ref{trivial2}.

Therefore, $F_1$ is a double fiber and an argument similar to the above also shows that $N$ meets a component of multiplicity $2$ of $F_2$. Now, depending on the intersection behaviour of $N$ with $F_2$, we see that $N$ and components of $F_2$ form a half-fiber of some other genus one fibration of type $\tilde{D}_4$ or $\tilde{D}_5$ if $F_2$ is of type $\tilde{D}_7$ and of type $\tilde{D}_6$ or $\tilde{E}_6$ if $F_2$ is of type $\tilde{E}_7$. Using the list of Proposition \ref{possiblecases}, we see that only the first and the last case is possible. In the last case, however, one can easily check that the resulting surface will be an exceptional Enriques surface of type $\tilde{E}_6$ (see also \cite[Theorem A]{EkeShep}). Since we assumed that $S$ is not one of the surfaces in Table \ref{table3}, we conclude that $F_2$ is of type $\tilde{D}_7$ and $N$ intersects $F_2$ as follows:

\centerline{
\xy
(0,15)*{};
@={(0,0),(10,0),(20,0),(30,0),(40,0),(10,-10),(10,10),(40,10),(40,-10)}@@{*{\bullet}};
(0,0)*{};(40,0)*{}**\dir{-};
(10,10)*{};(10,-10)*{}**\dir{-};
(40,10)*{};(40,-10)*{}**\dir{-};
\endxy
 }
 
The five leftmost vertices form a diagram of type $\tilde{D}_4$. By Proposition \ref{possiblecases}, this diagram is a half-fiber of a fibration with singular fibers $\tilde{D}_4$ and $\tilde{D}_4$. Adding the second fiber to the diagram, we arrive at the diagram of the Proposition.

Now, observe that the fibration we started with has three $(-2)$-curves as bisections. They are the curves $N,N_1,N_2$ in the diagram from the assertion of the proposition. All of them are fixed pointwise by any cohomologically trivial automorphism of order $2$, since such an automorphism fixes their intersection with $F_1$ and $F_2$. Hence, no such automorphism can exist by Proposition \ref{autoelliptic} applied to a general fiber of the fibration. Since no cohomologically trivial automorphisms of higher order can occur on $S$ by Corollary \ref{finiteaut} and $\Aut_{\ct}(S)$ is cyclic by Lemma \ref{trivial2}, we obtain $\Aut_{\ct}(S) = \bbZ/3\bbZ$.
\end{proof}

\begin{remark}
In fact, one can show that the only genus one fibrations on the supersingular Enriques surface of Proposition \ref{order3aut} are quasi-elliptic fibrations with singular fibers of types $\tilde{D}_4$ and $\tilde{D}_4$ or elliptic fibrations with a unique singular fiber of type $\tilde{D}_7$.
\end{remark}

\begin{theorem}\label{exceptions}
Assume that the automorphism groups of surfaces of type $\tilde{E}_8,\tilde{D}_8,\tilde{E}_7^2$ and $\tilde{E}_6$, are as in Table \ref{exceptnun1} and Table \ref{table3}. Then, for any supersingular Enriques surface $S$ in characteristic $2$, we have
$\Aut_{\ct}(S) \in \{1, \bbZ/2\bbZ, \bbZ/3\bbZ, \bbZ/5\bbZ, \bbZ/7\bbZ, \bbZ/11\bbZ, Q_8 \}$,
\end{theorem}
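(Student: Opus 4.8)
The plan is to assemble the results of Sections $6$ and $7$ into a short case analysis on the finite group $G := \Aut_{\ct}(S)$, finiteness being Proposition \ref{P7.2.1}. First I would dispose of the exceptional surfaces by fiat: if $S$ is of type $\tilde{E}_8$, $\tilde{D}_8$, $\tilde{E}_7^2$ or $\tilde{E}_6$, then the hypothesis of the theorem identifies $G$ with $\bbZ/11\bbZ$, $Q_8$, $\bbZ/7\bbZ$ (or $\{1\}$), or $\bbZ/5\bbZ$ respectively, all of which occur in the asserted list. So from here on I assume $S$ is of none of these four types; in particular $S$ is not $E_8$-extra-special, and, invoking the classification of extra-special surfaces in \cite{DL} (by which a supersingular extra-special Enriques surface is of type $\tilde{E}_8$ or $\tilde{D}_8$), the surface $S$ is not extra-special at all.

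Next I would control the odd part of $|G|$. Since $S$ is not $E_8$-extra-special, Lemma \ref{trivial2} tells us that every odd-order subgroup of $G$ is cyclic, so each Sylow $q$-subgroup of $G$ with $q$ an odd prime is cyclic. Consequently, if $|G|$ were divisible by a prime $q \ge 5$, or by $9$, then $G$ would contain an automorphism of odd order $\ge 5$; by Corollary \ref{finiteaut} this would force $S$ to be one of the surfaces in Table \ref{table3}, i.e.\ of type $\tilde{E}_8$, $\tilde{E}_7^2$ or $\tilde{E}_6$, contrary to assumption. Hence the odd part of $|G|$ is $1$ or $3$.

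Then I would bound the $2$-part: because $S$ is not extra-special, Theorem \ref{cohmain}(2) says a Sylow $2$-subgroup of $G$ has order at most $2$, so $|G| \in \{1,2,3,6\}$. It remains only to rule out $|G| = 6$; but such a $G$ contains an element of order $3$, and Proposition \ref{order3aut}, which applies since $S$ is not one of the surfaces in Table \ref{table3}, then forces $G \cong \bbZ/3\bbZ$, a contradiction. So $|G| \in \{1,2,3\}$, hence $G$ is $\{1\}$, $\bbZ/2\bbZ$ or $\bbZ/3\bbZ$, which together with the exceptional cases treated at the start finishes the proof.

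I do not expect a single deep obstacle, since the statement is really a synthesis of Theorem \ref{cohmain}, Corollary \ref{finiteaut}, Proposition \ref{order3aut} and Lemma \ref{trivial2}. The points needing the most care are: (i) the reduction to the extra-special classification, which is what legitimizes the use of Theorem \ref{cohmain} once the four exceptional types are set aside --- this rests on the fact that $E_7$-extra-special Enriques surfaces are classical, so that no further supersingular extra-special type intervenes; and (ii) the small group-theoretic bookkeeping that excludes $|G| = 6$ by feeding an order-$3$ element into Proposition \ref{order3aut}.
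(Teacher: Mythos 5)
Your proof is correct and is precisely the synthesis the paper intends: Theorem \ref{exceptions} is stated there without an explicit proof, as a direct combination of Theorem \ref{cohmain}(2), Lemma \ref{trivial2}, Corollary \ref{finiteaut} and Proposition \ref{order3aut}, which is exactly your case analysis (exceptional types by hypothesis; odd part $\le 3$ via cyclicity of odd subgroups and Corollary \ref{finiteaut}; $2$-part $\le 2$ via Theorem \ref{cohmain}(2); order $6$ killed by Proposition \ref{order3aut}). You are also right to single out the one hidden input — that a supersingular extra-special surface must be of type $\tilde{E}_8$ or $\tilde{D}_8$, i.e.\ no supersingular $E_7$-extra-special surface exists, which is what licenses applying Theorem \ref{cohmain}(2) once the four listed types are set aside; the paper relies on the same classification fact implicitly (only the classical $\tilde{E}_7^1$ appears in Table \ref{exceptnun1}).
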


\section{Numerically trivial automorphisms}
If $K_S = 0$, $\Aut_{\nt}(S) = \Aut_{\ct}(S)$, so we only have to treat the case that $K_S \neq 0$, i.e. $S$ is classical.

By definition, any $g\in \Aut_{\nt}(S)$ leaves invariant any genus one fibration, however, it may act non-trivially on its base, or equivalently, it may act non-trivially on the corresponding pencil $|D|$. Also, by definition, any $g\in \Aut_{\ct}(S)$ fixes the half-fibers of a genus one fibration (their difference in $\NS(S)$ is equal to $K_S$).
The following lemma proves the converse.

\begin{lemma}\label{L7.2.4} A numerically trivial automorphism $g$ that fixes  all half-fibers on $S$ is cohomologically trivial. 
\end{lemma}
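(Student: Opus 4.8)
The idea is to measure the failure of $g$ to be cohomologically trivial by a single homomorphism and then show this homomorphism vanishes on enough classes. We may assume $K_S \ne 0$, since otherwise $\Aut_{\nt}(S) = \Aut_{\ct}(S)$ and there is nothing to prove; write $\Tors(\NS(S)) = \langle K_S\rangle \cong \bbZ/2\bbZ$. As in the proof of Proposition \ref{P7.2.1}, the semidirect decomposition $\Or(\NS(S)) \cong \Hom(\Num(S),\langle K_S\rangle)\rtimes \Or(\Num(S))$ together with the splitting of $0\to \langle K_S\rangle \to \NS(S)\to \Num(S)\to 0$ (which splits because $\Num(S)$ is free) show that a numerically trivial $g$ acts on $\NS(S)$ by $g^*D = D + \phi_g(\bar D)\,K_S$ for a well-defined homomorphism $\phi_g\colon \Num(S)\to \bbZ/2\bbZ$, where $\bar D$ denotes the class of $D$ in $\Num(S)$; moreover $g\in \Aut_{\ct}(S)$ if and only if $\phi_g = 0$, and $\phi_g$ factors through $\Num(S)/2\Num(S)$.

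First I would check that $\phi_g$ kills two families of classes. For a half-fiber $F$, the hypothesis gives $g(F)=F$, hence $g^*F = F$ in $\NS(S)$ and so $\phi_g([F]) = 0$. For a $(-2)$-curve $R$, the class $g^*R$ is again that of a $(-2)$-curve and is numerically equivalent to $R$, so $R\cdot g^*R = R^2 = -2 < 0$ forces $g^*R = R$; thus $\phi_g([R]) = 0$ as well (here numerical triviality alone suffices, the hypothesis is not needed). Next, by Proposition \ref{extend} applied to a single half-fiber, $S$ admits a $U_{[10]}$-sequence $(F_1,\dots,F_{10})$; after reordering, $F_1$ is a half-fiber and each $F_{i+1}$ is either a half-fiber or of the form $F_i+R$ with $R$ a $(-2)$-curve. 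Since $\phi_g$ is additive and vanishes on half-fiber classes and on $(-2)$-curve classes, an induction gives $\phi_g([F_i]) = 0$ for all $i$.

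To conclude, note that $[F_1],\dots,[F_{10}]$ span a sublattice $L\subseteq \Num(S)$ with Gram matrix $J_{10}-I_{10}$ (off-diagonal entries $1$, diagonal entries $0$), whose determinant is $-9$; since $\Num(S)$ is unimodular, $[\Num(S):L]=3$. As $3$ is odd, $L$ surjects onto $\Num(S)/2\Num(S)$, so $\phi_g$, which factors through $\Num(S)/2\Num(S)$ and vanishes on $L$, is identically zero. Hence $g$ acts trivially on $\NS(S)=\Pic(S)$, i.e. $g\in \Aut_{\ct}(S)$. I do not expect a genuine obstacle here: the only point requiring care is keeping track of the distinction between $g^*F = F$ and $g^*F = F+K_S$ for half-fibers --- which is precisely the content of the hypothesis, as opposed to the automatic equality $g^*R = R$ for $(-2)$-curves --- together with the elementary lattice bookkeeping $\det(J_{10}-I_{10}) = -9$ and the unimodularity of $\Num(S)$.
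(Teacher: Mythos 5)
Your proof is correct, but the final step is genuinely different from the paper's. The paper disposes of the lemma in three lines: $g$ fixes every $(-2)$-curve because such a curve is the unique effective representative of its numerical class (your intersection argument $R\cdot g^*R=-2<0$ is the same point), it fixes the classes of all irreducible genus one curves by hypothesis, and then the Enriques Reducibility Lemma (\cite{CD}, Corollary 3.2.3) is invoked to say that these classes generate $\Pic(S)$, so $g^*=\mathrm{id}$ on $\NS(S)$. You replace that last citation by an explicit lattice computation: you only need the classes on which $\phi_g$ is known to vanish to generate $\Num(S)$ \emph{modulo} $2\Num(S)$, since the obstruction $\phi_g$ takes values in $\langle K_S\rangle\cong\bbZ/2\bbZ$, and the $U_{[10]}$-sequence supplied by Proposition \ref{extend} spans a sublattice of odd index $3$ in the unimodular lattice $\Num(S)$ (the determinant $\det(J_{10}-I_{10})=-9$ and the index computation are correct, as is the structure of the sequence as half-fibers and half-fibers plus chains of $(-2)$-curves, which makes the induction $\phi_g([F_{i+1}])=\phi_g([F_i])+\phi_g([R])=0$ work). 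What your route buys is independence from the Reducibility Lemma at the cost of relying on Proposition \ref{extend}; it also isolates cleanly why only generation mod $2$ is needed. What the paper's route buys is brevity and no lattice bookkeeping. Both arguments correctly reduce to the two vanishing statements for $\phi_g$, and your observation that the $(-2)$-curve case needs only numerical triviality while the half-fiber case is exactly where the hypothesis enters matches the paper's framing of the lemma as a converse.
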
  

\begin{proof} Since $g$ is numerically trivial, it fixes any smooth rational curve, because they are the unique representatives in $\NS(S)$ of their classes in $\Num(S)$. By assumption, it fixes the linear equivalence class of all irreducible genus one curves. Applying Enriques Reducibility Lemma from \cite{CD}, Corollary 3.2.3 we obtain that $g$ fixes the linear equivalence classes of all curves on $S$.
\end{proof}

\begin{lemma} \label{cyclic}
Let $G$ be a finite, tame group of automorphisms of an irreducible  curve $C$ fixing a nonsingular point $x$. Then, $G$ is cyclic.
\end{lemma}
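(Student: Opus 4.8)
The plan is to reduce the statement to the well-known fact that a finite subgroup of the multiplicative group of a field is cyclic. Let $G$ be a finite tame group of automorphisms of the irreducible curve $C$ fixing a nonsingular point $x$. First I would pass to the completed local ring $\widehat{\calO}_{C,x}$; since $x$ is a nonsingular point, this ring is isomorphic to the power series ring $\Bbbk[[t]]$ in one variable, and the action of $G$ on $C$ induces a faithful action of $G$ on $\Bbbk[[t]]$ by continuous $\Bbbk$-algebra automorphisms. Faithfulness is the point that needs a word: an automorphism of an irreducible curve fixing a point and acting trivially on the complete local ring at that point is the identity, because it acts trivially on the function field (the local ring at a point of an irreducible curve has the function field as its fraction field, and a $\Bbbk$-algebra homomorphism of a domain into its completion is injective on the image of the domain).

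Next I would linearize the action. Each $g \in G$ sends $t$ to a power series $g(t) = a_g t + (\text{higher order terms})$ with $a_g \in \Bbbk^*$, since $g$ preserves the maximal ideal $\mathfrak{m} = (t)$ and acts invertibly on $\mathfrak{m}/\mathfrak{m}^2 \cong \Bbbk$. The assignment $g \mapsto a_g$ is a group homomorphism $\chi: G \to \Bbbk^*$, whose image is finite and hence cyclic. It remains to show $\chi$ is injective, i.e. that the kernel $N = \{g \in G : g(t) \equiv t \bmod \mathfrak{m}^2\}$ is trivial. This is exactly where tameness enters: a nontrivial automorphism of $\Bbbk[[t]]$ of finite order $n$ with $n$ invertible in $\Bbbk$ cannot be trivial on $\mathfrak{m}/\mathfrak{m}^2$. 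The standard argument: if $g \in N$ is nontrivial of order $n$ prime to $p$, average the "coordinate" — set $u = \frac{1}{n}\sum_{i=0}^{n-1} a_g^{-i} g^i(t)$ (or more simply, since $g$ is already unipotent on the associated graded, use $u = \frac{1}{n}\sum_{i=0}^{n-1} g^i(t)$) to produce a new uniformizer $u$ with $g(u) = u$; then $g$ fixes $\Bbbk[[u]] = \Bbbk[[t]]$ pointwise, contradicting nontriviality of the action. Hence $N = \{1\}$, so $G \cong \chi(G)$ is cyclic.

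The main obstacle I anticipate is the tameness step — making precise why $N$ must vanish in characteristic $p$ when $|G|$ is prime to $p$, and being careful that the averaging trick is legitimate (it requires $n = \ord(g)$ invertible, which tameness of $G$ guarantees for every element, and one should check $u$ is genuinely a uniformizer, i.e. its linear coefficient is nonzero, which holds because $g \in N$ acts trivially on $\mathfrak{m}/\mathfrak{m}^2$ so each $g^i(t)$ has linear coefficient $1$ and the average does too). Everything else — the identification $\widehat{\calO}_{C,x} \cong \Bbbk[[t]]$, faithfulness of the induced action, and finiteness of a subgroup of $\Bbbk^*$ forcing cyclicity — is routine. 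One subtlety worth flagging: one must ensure the \emph{whole} group $G$, not just each cyclic subgroup, acts faithfully and tamely on $\widehat{\calO}_{C,x}$; faithfulness on $C$ implies faithfulness on the generic point hence on the function field hence on $\widehat{\calO}_{C,x}$ since $C$ is irreducible, and tameness of $G$ is hypothesized outright, so no difficulty arises there.
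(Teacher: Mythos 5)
Your proof is correct and follows the same route as the paper's (much terser) argument: linearize the tame action on the completed local ring $\widehat{\calO}_{C,x}\cong\Bbbk[[t]]$, observe that the induced representation on the one-dimensional tangent (equivalently cotangent) space $\mathfrak{m}/\mathfrak{m}^2$ is faithful, and conclude that $G$ embeds as a finite subgroup of $\Bbbk^*$, hence is cyclic. The details you supply — faithfulness of the action on $\widehat{\calO}_{C,x}$ and the averaging argument showing the kernel of $G\to\Bbbk^*$ is trivial — are exactly what the paper leaves implicit.
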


\begin{proof} Since $G$ is finite and tame, one can linearize the action in the formal neighborhood of the point $x$. It follows that the action of $G$ on the tangent space of $C$ at $x$ is faithful. Since $x$ is nonsingular, the tangent space is one-dimensional and therefore the group is cyclic.
\end{proof}

\begin{theorem}\label{numodd}
Let $S$ be an Enriques surface and assume that $p \neq 2$. Then, $\Aut_{\nt}(S) \cong \bbZ/2^a\bbZ$ with $a \leq 2$. Moreover, if $S$ is unnodal, then $\Aut_{\nt}(S) = \{1\}$.
\end{theorem}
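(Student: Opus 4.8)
The plan is to reduce everything to showing that $G := \Aut_{\nt}(S)$ is \emph{cyclic}. Since $p \neq 2$, no Enriques surface is extra-special, so Theorem \ref{cohmain} gives $|\Aut_{\ct}(S)| \le 2$, with $\Aut_{\ct}(S) = \{1\}$ when $S$ is unnodal. By Corollary \ref{C7.2.5}, $G/\Aut_{\ct}(S)$ is $2$-elementary abelian, so $G$ is a finite $2$-group with $g^2 \in \Aut_{\ct}(S)$ for every $g \in G$; in particular $G$ has exponent dividing $4$. Once we know $G$ is cyclic, $G/\Aut_{\ct}(S)$ is cyclic and $2$-elementary, hence of order $\le 2$, and thus $G \cong \bbZ/2^a\bbZ$ with $a \le 2$. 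So everything comes down to cyclicity (and, in the unnodal case, triviality).

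To prove cyclicity, fix a half-fiber $F_1$ and, by Theorem \ref{diagrams}, extend it to a non-degenerate $U_{[3]}$-sequence $(F_1, F_2, F_3)$; recall that $F_i \cap F_j$ is a single point for $i \neq j$. Each $g \in G$ preserves the pencil $|2F_i|$ and acts on its base $B_i \cong \bbP^1$; since $p \neq 2$ the action is tame, so the image of $G \to \Aut(B_1) = \PGL_2$ is a finite tame $2$-group, hence cyclic or dihedral. Let $K_1 \trianglelefteq G$ be the kernel; its elements preserve every fiber of $|2F_1|$, in particular they fix $F_1$ and $F_1'$. Analyzing the action of $K_1$ on $F_1$ (or on a component of $F_1$, which $K_1$ preserves as a $(-2)$-curve) and on a general smooth fiber by means of Lemma \ref{cyclic}, Proposition \ref{autoelliptic} and Lemma \ref{L7.2.4}, one obtains $K_1 \subseteq \Aut_{\ct}(S)$, so $|K_1| \le 2$. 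Hence $G$ is an extension of a cyclic-or-dihedral group by a group of order $\le 2$, and, by the exponent bound, it will be cyclic once we exclude $(\bbZ/2\bbZ)^2 \subseteq G$ (this also handles the dihedral and the $Q_8$ possibilities).

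This exclusion is the technical heart. Suppose $(\bbZ/2\bbZ)^2 \subseteq G$. At most one numerically trivial involution is cohomologically trivial (Theorem \ref{cohmain}), so some numerically trivial involution $g$ moves a half-fiber $F_1$ and therefore acts on the base of $|2F_1|$ as an involution with two fixed points, neither of them the image of a double fiber. On the bielliptic model $\sfD_1$ attached to a $U_{[2]}$-pair extending $F_1$, the induced automorphism lies in $\bbG_m^2 \rtimes D_8$ by Proposition \ref{automorphisms} and Remark \ref{explicit}. Since $g$ is tame and acts trivially on $H^2_{\et}(S, \bbQ_l)$, the Lefschetz fixed-point formula gives $e(S^g) = 12$, with $S^g$ contained in the union of the two $g$-invariant fibers of $|2F_1|$; combining this with Lemma \ref{lefschetz}, Proposition \ref{autoelliptic} and the fact that $g$ fixes the relevant special bisections, one is forced into a configuration of singular fibers and $(-2)$-curves that cannot occur. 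An analogous count for an element of order $4$ excludes $Q_8$. Therefore $G$ is cyclic, which proves the first assertion.

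Finally, suppose $S$ is unnodal. Then $\Aut_{\ct}(S) = \{1\}$, all fibers of all genus one fibrations are irreducible, and $|F_i + F_j - F_k + K_S| = \emptyset$ for the classes of a $U_{[3]}$-sequence, since an effective member would consist of $(-2)$-curves by Remark \ref{remark1}; hence $F_1 \cap F_2 \cap F_3 = \emptyset$ by Lemma \ref{L3.8}. A non-trivial $g \in \Aut_{\nt}(S)$ then has order $2$ and moves some half-fiber by Lemma \ref{L7.2.4}; applying Proposition \ref{autoelliptic} to the irreducible curves $F_1, F_2, F_3$, whose three pairwise intersection points are distinct, and to a general fiber of $|2F_1|$, forces $g = \id$, so $\Aut_{\nt}(S) = \{1\}$. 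The main obstacle throughout is the exclusion of $(\bbZ/2\bbZ)^2$: unlike a cohomologically trivial involution, a numerically trivial one is not rigidified as a bielliptic involution, so it cannot be identified directly on the quartic model and must instead be cornered through its fixed locus and the classification of automorphisms of genus one fibers.
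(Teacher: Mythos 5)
Your reduction to cyclicity is the same as the paper's: $|\Aut_{\ct}(S)|\le 2$ by Theorem \ref{cohmain}, $\Aut_{\nt}(S)/\Aut_{\ct}(S)$ is $2$-elementary by Corollary \ref{C7.2.5}, so every element has order dividing $4$ and a cyclic group must have order $\le 4$. But the step you yourself call ``the technical heart'' --- ruling out $(\bbZ/2\bbZ)^2$ (and $Q_8$) --- is only asserted, not proved: ``one is forced into a configuration of singular fibers and $(-2)$-curves that cannot occur'' names no configuration and gives no reason it cannot occur. This cannot be waved away, because in characteristic $2$ such a configuration \emph{does} occur: the $\tilde{D}_4+\tilde{D}_4$ surface of Remark \ref{D4D4} has $\Aut_{\nt}(S)\cong(\bbZ/2\bbZ)^2$. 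So any correct argument must use tameness in an essential, identifiable way, and yours never pins down where.

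The paper's actual mechanism is different from what you sketch. Taking $g\in\Aut_{\nt}(S)\setminus\Aut_{\ct}(S)$, the Lefschetz count forces a $g$-fixed fiber $F'$ of $|2F_1|$ with at least $5$ components (this already settles the unnodal case, since such a fiber contains $(-2)$-curves). If $F'$ is additive, some component $R$ meets at least three other components; every element of $\Aut_{\nt}(S)$ preserves each component and hence fixes those three intersection points, so $R$ is fixed pointwise by the whole group. Because tame automorphisms have smooth fixed loci, no non-trivial element can also fix pointwise a $(-2)$-curve $C$ adjacent to $R$ (the union would be singular at $C\cap R$), so $\Aut_{\nt}(S)$ acts faithfully on $C$ fixing the smooth point $C\cap R$, and Lemma \ref{cyclic} gives cyclicity outright --- no separate exclusion of $(\bbZ/2\bbZ)^2$ or $Q_8$ is ever needed. (The multiplicative case reduces to type $\tilde{A}_7$ and uses a second fibration to again produce a pointwise-fixed $(-2)$-curve.) Separately, your claim that the kernel $K_1$ of the action on the base of $|2F_1|$ lies in $\Aut_{\ct}(S)$ does not follow from Lemma \ref{L7.2.4}: that lemma requires fixing \emph{all} half-fibers on $S$, whereas elements of $K_1$ are only known to fix those of $|2F_1|$. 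Neither gap is cosmetic; as written the proposal does not prove the theorem.
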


\begin{proof}
By Theorem \ref{cohmain} and Lemma \ref{L7.2.4}, any non-trivial $g \in \Aut_{\nt}(S)$ has order $2$ or $4$, so it suffices to show that $\Aut_{\nt}(S)$ is cyclic. Since $\Aut_{\nt}(S)$ is tame, every numerically trivial automorphism has smooth fixed locus.

Assume that there is some $g \in \Aut_{\nt}(S) \setminus \Aut_{\ct}(S)$. Then, $g$ switches the half-fibers of some elliptic fibration $|2F_1|$ on $S$ by Lemma \ref{L7.2.4}. The argument with the Euler-Poincar\'e characteristics from the proof of Proposition \ref{possiblecases} applies and shows that one of the two fibers $F',F''$ of $|2F_1|$ fixed by $g$, say $F'$, has at least $5$ components. Hence, if $S$ is unnodal, then $\Aut_{\nt}(S) = \{1\}$ by Theorem \ref{cohmain}.

If $F'$ is additive, then it has some component $R$, which is fixed pointwise by $\Aut_{\nt}(S)$, because it is adjacent to at least three other components. Since the fixed loci are smooth, any automorphism fixing a $(-2)$-curve adjacent to $R$ is trivial. Hence, the claim follows from Lemma \ref{cyclic}.

If $F'$ is multiplicative, the fixed point formula shows that $F'$ is of type $\tilde{A}_7$ and $g$ has four fixed points on $F''$. Extend $F_1$ to a non-degenerate $U_{[2]}$-sequence $(F_1,F_2)$. Since $F'.F_2 = 2$, $F'$ contains a cycle of $3$ $(-2)$-curves contained in a fiber $D$ of $|2F_2|$. Now, as in the additive case, we find a $(-2)$-curve, which is fixed pointwise by $\Aut_{\nt}(S)$. Indeed, if $D$ is additive, we use the same argument as before and if $D$ is multiplicative, then some component of $D$ meets a component of $F'$ exactly once in a nonsingular point of $F'$. This component is fixed pointwise by $\Aut_{\nt}(S)$.
\end{proof}

\begin{remark}\label{D4D4}
The previous Theorem is not true if $p = 2$. Indeed, there is an Enriques surface $S$ of type $\tilde{D}_4 + \tilde{D}_4$ with the dual graph of $(-2)$ curves  

\centerline{
\xy
(0,15)*{};
@={(0,0),(10,0),(20,0),(30,0),(40,0),(10,-10),(10,10),(40,0),(50,10),(50,0),(50,-10),(60,0)}@@{*{\bullet}};
(0,0)*{};(60,0)*{}**\dir{-};
(10,10)*{};(10,-10)*{}**\dir{-};
(50,10)*{};(50,-10)*{}**\dir{-};
\endxy
 }
 
\noindent that satisfies $\Aut_{\nt}(S) = (\bbZ/2\bbZ)^2$ (see \cite{KKM}). Moreover, we have seen in the proof of Corollary \ref{mainclassical} that $\Aut_{\ct}(S) = \{1\}$.
\end{remark}

If $p = 2$, even though we still have the same bound on the size of $\Aut_{\nt}(S)$, the cyclic group of order $4$ can not occur.

\begin{theorem}\label{numeven}
Let $S$ be a classical Enriques surface in characteristic $2$ which is not $E_8$-extra-special. Then, $\Aut_{\nt}(S) \cong (\bbZ/2\bbZ)^b$ with $b \leq 2$.
\end{theorem}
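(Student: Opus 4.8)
\textbf{Proof plan for Theorem \ref{numeven}.} The plan is to combine the already-established structural constraints on numerically trivial automorphisms with the group-theoretic input from Corollary \ref{C7.2.5}. By Corollary \ref{C7.2.5}, the quotient $\Aut_{\nt}(S)/\Aut_{\ct}(S)$ is $2$-elementary abelian, and by Corollary \ref{mainclassical} (together with Theorem \ref{cohmain}), $\Aut_{\ct}(S)$ is either trivial or cyclic of order $2$; in either case $\Aut_{\ct}(S)$ is central (it acts trivially on $\NS(S)$) and $2$-elementary. So the first task is to rule out a cyclic subgroup $\bbZ/4\bbZ$ inside $\Aut_{\nt}(S)$: if $g$ has order $4$, then $g^2 \in \Aut_{\ct}(S)$ is a non-trivial cohomologically trivial involution, hence by Corollary \ref{mainclassical} the surface $S$ is $D_8$-extra-special. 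For that surface the automorphism group and $\Aut_{\nt}(S)$ are already computed (Table \ref{exceptnun1}), and they are $2$-elementary, so no element of order $4$ occurs. This shows every element of $\Aut_{\nt}(S)$ has order dividing $2$, so $\Aut_{\nt}(S)$ is itself $2$-elementary abelian, $\Aut_{\nt}(S) \cong (\bbZ/2\bbZ)^b$ for some $b$.

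The second task is the bound $b \leq 2$. Here I would bound the rank of $\Aut_{\nt}(S)/\Aut_{\ct}(S)$ by $1$ and use that $|\Aut_{\ct}(S)| \leq 2$. So suppose for contradiction that $g_1, g_2 \in \Aut_{\nt}(S)$ have distinct images in $\Aut_{\nt}(S)/\Aut_{\ct}(S)$, both non-trivial; I want to derive a contradiction or pin down $S$. Since $S$ is not $E_8$-extra-special, by Theorem \ref{diagrams2} every half-fiber extends to a non-degenerate $U_{[2]}$-pair, and the argument of Theorem \ref{numodd} (via the Euler-Poincaré characteristic computation in Proposition \ref{possiblecases}) shows: each $g_i$ that is not cohomologically trivial switches the two half-fibers of some genus one fibration $|2F^{(i)}|$, and one of the two $g_i$-fixed fibers of $|2F^{(i)}|$, say $F'_i$, has at least $5$ components. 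As in the proof of Theorem \ref{numodd}, when $F'_i$ is additive one finds a $(-2)$-curve $R_i$ adjacent to at least three components of $F'_i$; since $\Aut_{\nt}(S)$ is tame, its fixed loci are smooth, so any automorphism fixing a $(-2)$-curve meeting $R_i$ transversally in a smooth point acts trivially on it, forcing (via Lemma \ref{cyclic} applied at that point, or directly) strong rigidity. The multiplicative case of type $\tilde{A}_7$ is handled exactly as in Theorem \ref{numodd} by passing through a fiber of $|2F_2|$. The upshot I would aim for: the subgroup of $\Aut_{\nt}(S)$ acting non-trivially on the base of $|2F^{(i)}|$ is constrained, and collecting the constraints for all relevant fibrations forces at most one independent "base-switching" direction modulo $\Aut_{\ct}(S)$, giving $b \leq 1 + 1 = 2$.

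The main obstacle I anticipate is the second task — the rank bound $b\le 2$ — because in characteristic $2$ the fixed-point analysis of half-fibers is genuinely more delicate than in Theorem \ref{numodd}: the half-fibers are now of additive type (Proposition \ref{half-fibers}), the bielliptic surface is $\sfD_1$ with $\Aut(\sfD_1)^0 \cong \bbG_m^2$ rather than a unipotent group, and one must be careful that two independent numerically trivial involutions modulo $\Aut_{\ct}(S)$ would force incompatible behaviour on the common $(-2)$-curves of several fibrations. I would organize this by fixing a $U_{[2]}$-pair $(F_1,F_2)$ adapted to $g_1$, analyzing how $g_1$ and $g_2$ act on $\sfD_1$ and on the pencils \eqref{conics}, and using Remark \ref{explicit} to see that the subgroup of $\Aut_{\nt}(S)$ switching the half-fibers of $|2F_1|$ has order at most $2$ modulo the subgroup fixing them pointwise on $\sfD_1$. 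Combined with the fact — to be extracted from Corollary \ref{mainclassical} and the extremality consequences of Corollary \ref{extremal} — that the subgroup fixing all half-fibers is $\Aut_{\ct}(S)$ of order at most $2$, this yields $|\Aut_{\nt}(S)| \leq 4$ and hence $b \leq 2$.
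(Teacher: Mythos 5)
Your first step (every element of $\Aut_{\nt}(S)$ has order $2$, hence the group is elementary abelian) is correct and essentially matches the paper: one disposes of the $D_8$-extra-special case via Table \ref{exceptnun1} (whose computation depends only on the dual graph, by Remark \ref{extraspecialauts}) and then works with $\Aut_{\ct}(S)=\{1\}$, so that Corollary \ref{C7.2.5} gives $\Aut_{\nt}(S)\cong(\bbZ/2\bbZ)^b$.

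The bound $b\le 2$ is where your plan has a genuine gap, in two places. First, you cannot transplant the argument of Theorem \ref{numodd}: an involution in characteristic $2$ is wild, so neither the Euler--Poincar\'e/Lefschetz computation of Proposition \ref{possiblecases} (which needs $\ord(g)$ prime to $p$), nor the smoothness of fixed loci, nor Lemma \ref{cyclic} (which needs tameness) is available; so the claim that some fixed fiber has at least $5$ components has no justification here. Second, your fallback via $\sfD_1$ proves less than you assert. Fixing a non-degenerate $U_{[2]}$-pair $(F_1,F_2)$, the action of $\Aut_{\nt}(S)$ on the two pairs of half-fibers $\{F_1,F_1'\}$ and $\{F_2,F_2'\}$ gives a homomorphism to $(\bbZ/2\bbZ)^2$ whose kernel acts on $\sfD_1$ through $\bbG_m^2$; since $\bbG_m$ has no $2$-torsion in characteristic $2$, that kernel is generated by the bielliptic involution of $|2F_1+2F_2|$ and has order at most $2$. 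But this kernel is \emph{not} $\Aut_{\ct}(S)$: the bielliptic involution fixes $F_1,F_1',F_2,F_2'$ yet may swap the half-fibers of other fibrations, so it can be numerically but not cohomologically trivial --- exactly what happens for the $\tilde{D}_4+\tilde{D}_4$ surface of Remark \ref{D4D4}, where $\Aut_{\ct}(S)=\{1\}$ but $\Aut_{\nt}(S)=(\bbZ/2\bbZ)^2$. Your identification ``subgroup fixing these half-fibers $=\Aut_{\ct}(S)$ of order $\le 2$'' is therefore wrong as stated, and what the argument actually yields is only $|\Aut_{\nt}(S)|\le 8$, i.e.\ $b\le 3$. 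The missing idea is the exclusion of $b=3$: in that case the kernel element is the bielliptic involution and is numerically trivial, so Lemma \ref{numtrivialbiell} and Corollary \ref{extremal} force $|2F_1|$ and $|2F_2|$ to be extremal and to share eight $(-2)$-curves; surjectivity onto $(\bbZ/2\bbZ)^2$ means each pair of half-fibers is swapped by some element, so (all $(-2)$-curves being fixed by $\Aut_{\nt}(S)$) the half-fibers are irreducible and the shared curves lie in simple additive fibers, which pins the reducible fibers down to type $\tilde{D}_8$ or $\tilde{E}_8$ and shows $S$ is $D_8$-extra-special, contradicting $\Aut_{\ct}(S)=\{1\}$. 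Without this final geometric step your argument does not reach $b\le 2$.
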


\begin{proof}
By Corollary \ref{mainclassical}, $\Aut_{\ct}(S) \neq \{1\}$ if and only if $S$ is $D_8$-extra-special and for such a surface we have $\Aut_{\nt}(S) = \Aut_{\ct}(S) = \bbZ/2\bbZ$ . Hence, we can assume $\Aut_{\ct}(S) = \{1\}$. By Lemma \ref{L7.2.4}, we have $\Aut_{\nt}(S) = (\bbZ/2\bbZ)^b$ and we have to show $b \leq 2$.
Suppose that $b \geq 3$ and take some half-fiber $F_1$. By Theorem \ref{diagrams2}, we can extend $F_1$ to a non-degenerate $U_{[2]}$-sequence $(F_1,F_2)$. Since $|\Aut_{\nt}(S)| > 4$, there is some numerically trivial involution  $g$ that preserves $F_1$ and $F_2$. By Remark \ref{explicit}, such an automorphism acts trivially on $\sfD_1$, hence it has to coincide with the bielliptic involution associated to $|2F_1 + 2F_2|$. 
Both fibrations have a unique reducible fiber $F$ (resp. $F'$) which has to be simple, since there is some numerically trivial involution which does not preserve $F_i$. By Corollary \ref{extremal}, $F$ and $F'$ are additive and share $8$ components. This is only possible if they are of type $\tilde{D}_8$ or $\tilde{E}_8$. Note that $F.F' = 4$ is impossible if both of them are of type $\tilde{D}_8$. In the remaining cases, it is easy to check that the surface is $D_8$-extra-special. We have already treated this surface.
\end{proof}


\begin{thebibliography}{30}
\bibitem{Artin} M. Artin, \textit{Coverings of the Rational Double Points in Characteristic p}, in ``Complex Analysis and Algebraic Geometry", Iwanami-Shoten. Tokyo. 1977, 11--22.

\bibitem{Cossec} F. Cossec, \textit{On the Picard group of Enriques surfaces}, Math. Ann. {\bf 271} (1985), 577--600.

\bibitem{CD} F. Cossec, I. Dolgachev, \textit{Enriques surfaces. I}. Progress in Mathematics, 76. Birkh\"auser Boston, Inc., Boston, MA, 1989.

 \bibitem{CDL} F. Cossec, I. Dolgachev, C. Liedtke, \textit{Enriques surfaces, I}, new edition in preparation.

\bibitem{DolgachevNum} I. Dolgachev, \textit{Numerical automorphisms of Enriques surfaces in arbitrary characteristic}, Arithmetic and geometry of K3 surfaces and Calabi-Yau threefold, ed. R. Lazu, M. Sch\"utt, N. Yui, Fields Institute Communications, vol. 67, Springer. 2013, pp.267--284.
 

\bibitem{DL} I. Dolgachev, S. Kondo, \textit{Enriques surfaces, II}, in preparation.

\bibitem{EkeShep} T. Ekedahl, N. I. Shepherd-Barron \textit{On exceptional Enriques surfaces}. arXiv:math/0405510

\bibitem{Ito} H. Ito, \textit{On extremal elliptic surfaces in characteristic 2 and 3}. Hiroshima Math. J. 32 (2002), no. 2, 179--188.

\bibitem{KKM} T. Katsura, S. Kondo, G. Martin, \textit{Classification of Enriques surfaces with finite automorphism group in characteristic 2}.  math.AG.arXiv:1703.09609

\bibitem{Keum1} J. Keum, \textit{Orders of automorphisms of K3 surfaces}.
 Adv. Math. 303 (2016), 39--87.

\bibitem{Kondo} 	S. Kond\={o}, \textit{Enriques surfaces with finite automorphism group}, 	Japan J. Math. {\bf 12} (1986), 192--282.

\bibitem{Lang1} W. Lang, \textit{Configurations of singular fibers on rational elliptic surfaces in characteristic two}. Special issue in honor of Robin Hartshorne. Comm. in Algebra {\bf 28} (2000),  5813--5836.

\bibitem{Lang2} W. Lang, \textit{Classification of singular fibers on rational elliptic surfaces in characteristic three}. Comm. Algebra {\bf 33} (2005),  4533--4566.

%\bibitem{Lang4} W. Lang \textit{On Enriques Surfaces in Characteristic p. I.}. Math. Ann. {\bf 265} (1983), 45-66. 

%\bibitem{Lang3} W. Lang \textit{On Enriques Surfaces in Characteristic p. II.}. Math. Ann. {\bf 281.4} (1988), 671--686.

\bibitem{Lieberman} D.  Lieberman, \textit{Compactness of the Chow scheme: applications to automorphisms and deformations of K\"ahler manifolds}. Fonctions de plusieurs variables complexes, III (S\'em. Francois Norguet, 1975-1977), pp. 140--186, Lecture Notes in Math., 670, Springer, Berlin, 1978.

\bibitem{Lorenzini} Q. Liu, D. Lorenzini and M. Raynaud, \textit{N\'eron models, Lie algebras, and reduction of curves of genus one}, Inv. Math. {\bf 157} (2004), 455--518.

\bibitem{MukaiNam} S. Mukai, Y. Namikawa, \textit{Automorphisms of Enriques surfaces which act trivially on the cohomology groups}. Invent. Math. {\bf 77} (1984), 383--397.

\bibitem{Mukai} S. Mukai, \textit{Numerically trivial involutions of Kummer type of an Enriques surface}. Kyoto J. Math. {\bf 50} (2010),  889--902.

\bibitem{Ogus} A. Ogus, \textit{Supersingular K3 crystals}.
Journ\'ees de G\'eom\'etrie Alg\'ebrique de Rennes (Rennes, 1978), Vol. II, Ast\'erisque, 64, Paris: Soci\'et\'e Math\'ematique de France, pp. 3--86

\bibitem{Peters}  C. Peters, \textit{On automorphisms of compact K\"ahler surfaces}. Journ\'ees de G\'eometrie 
Alg\'ebrique d'Angers, Juillet 1979/Algebraic Geometry, Angers, 1979, pp. 249--267, Sijthoff and Noordhoff, Alphen aan den Rijn, Germantown, Md., 1980. 

\bibitem{Rizov} J. Rizov, \textit{Moduli stacks of polarized K3 surfaces in mixed characteristic}. Serdica Math. J., 32(2--
3):131-178, 2006.

\bibitem{Silverman} J. Silverman, \textit{The arithmetic of elliptic curves}. Second edition. Graduate Texts in Mathematics, 106. Springer, Dordrecht, 2009.
\end{thebibliography}
 \end{document}